\tikzstyle arrowstyle=[scale=2]
\tikzstyle directed=[postaction={decorate,decoration={markings,
    mark=at position .65 with {\arrow[arrowstyle]{stealth}}}}]
\tikzstyle reverse directed=[postaction={decorate,decoration={markings,
    mark=at position .65 with {\arrowreversed[arrowstyle]{stealth};}}}]
    \tikzstyle left directed=[postaction={decorate,decoration={markings,
    mark=at position -.62 with {\arrow[arrowstyle]{stealth}}}}]
\tikzstyle left reverse directed=[postaction={decorate,decoration={markings,
    mark=at position -.62 with {\arrowreversed[arrowstyle]{stealth};}}}]
\newtheorem{thm}{Theorem}[section]
\newtheorem{cor}[thm]{Corollary}
\newtheorem{lem}[thm]{Lemma}
\newtheorem{clm}[thm]{Claim}
\newtheorem{prop}[thm]{Proposition}
\theoremstyle{remark}
\newtheorem{rmk}[thm]{Remark}
\theoremstyle{definition}
\newtheorem{Def}[thm]{Definition}                                        %
\title{Ricci-flat  manifolds  of generalized ALG asymptotics}
\date{\vspace{-5ex}}
\begin{document}

\maketitle

\begin{abstract} In  complex dimensions $\geq 3$,  we provide a geometric existence for generalized ALG complete non-compact Ricci flat K\"ahler manifolds  with Schwartz decay i.e. metric decay in any polynomial rate to an ALG model $\mathbb{C}\times Y$ modulo finite cyclic group action, where $Y$ is Calabi-Yau. 

Consequently,  for any $K3$ surface with a purely non-symplectic automorphism $\sigma$ of finite order, a K\"ahler crepant resolution of the orbifold 
$\frac{\mathbb{C} \times K3}{\langle  \sigma \rangle}$
admits  ALG Ricci-flat K\"ahler metrics with Schwartz  decay. It is known that K\"ahler crepant resolution exists in our case. Hence there are  $39$ integers,  such that $2\pi$ divided by each of them is the asymptotic angle of an ALG Ricci-flat K\"ahler $3-$fold with Schwartz decay. We also exhibit a 1638 parameters family of  ALG Ricci-flat  K\"ahler $3-$folds with asymptotic angle $\pi$ that realize $64$ distinct triples of Betti numbers. They are  iso-trivially fibred by $K3$ surface with a non-symplectic Nikulin involution.

A simple version of local K\"unneth formula for $H^{1,1}$/local $i\partial\overline{\partial}-$lemma plays a role in both the Schwartz decay, and the construction of  ansatz that equals a Ricci flat ALG model outside a compact set (isotrivial ansatz). 

The proof of Schwartz decay relies on a non-concentration of the Newtonian potential, and can not be immediately generalized to fibration with higher dimensional base, due to existence of concentrating sequence of $L^{2}$ normalized eigen-functions  on unit round spheres of (real) dimension $\geq 2$.
\end{abstract}
\tableofcontents
\addtocontents{toc}{\protect\setcounter{tocdepth}{1}}
\section{Introduction}
\subsection{Overview}

ALG gravitational instantons %i.e. complete hyper-K\"ahler surfaces  asymptotic  in a certain sense to a cyclic quotient of $\mathbb{C}\times T^{2}$,  
exist in abundance and are intensively studied. For example,  see the construction by  Cherkis-Kapustin \cite{CK} via moduli of periodic monopoles, Biquard-Minerbe \cite{BM} by  Kummer gluing constructions, and Hein \cite{HeinJAMES} via  Tian-Yau type \cite{TY1} Monge-Amper\`e method  and  rational elliptic surfaces with a singular fiber removed.  Along another thread,  gluing construction for manifolds with holonomy $G_{2}$ requires quasi
asymptotic locally  Euclidean  and asymptotic cylindrical Ricci flat K\"ahler manifolds as building blocks \cite{Joyce, JK, Kovalev, KL, CHNP, CHNPDuke}.  This is analogous to that gravitational instantons are singularity models of Gromov-Hausdorff limits of  hyper-K\"ahler metrics on $K3$ surfaces (for example,  see \cite{LS,  CC,  Foscolo,  HSVZJAMES} and the references therein).

Motivated by the above,  we provide examples of generalized ALG  Ricci flat K\"ahler $3-$folds on iso-trivial $K3-$fibrations. We apply Tian-Yau Monge-Ampere solvability \cite{TY1} and Hein's version \cite{HeinThesis, HeinJAMES}. These metrics are asymptotic to a group quotient of $\mathbb{C}\times K3$, where the $K3$ fiber has  a purely non-symplectic automorphism of finite order. 

 \begin{Def}\label{Def Y sigma before thm} Let $(Y,\sigma)$ be a compact Calabi-Yau manifold with a purely non-symplectic automorphism of finite order such that  
$\sigma^{\star}\Omega_{Y}=\zeta_{n}\Omega_{Y}$, where  $\zeta_{n}$ is a  primitive $n-$th unit root, and $\Omega_{Y}$ is the now-where vanishing holomorphic $(d-1,0)-$form on $Y$ (complex dimension of $Y$ is assumed to be $d-1$). Let $\sigma$ act on $\mathbb{C}$ by multiplication of $\frac{1}{\zeta_{n}}$. Here, a Calabi-Yau manifold means a smooth irreducible projective variety with trivial canonical bundle. 
\end{Def}

Our main geometric existence of ALG Ricci flat metric on crepant resolutions works in general dimensions. 
\begin{thm}\label{Thm} Under Definition \ref{Def Y sigma before thm} and \ref{Def WHF} below,   suppose  $\widetilde{\frac{\mathbb{C} \times Y}{\langle  \sigma \rangle}}$  is  a K\"ahler crepant resolution of the orbifold $\frac{\mathbb{C} \times Y}{\langle  \sigma \rangle}$  and  $b^{1}(Y)=0$. For  any  K\"ahler metric $\omega$ on $\widetilde{\frac{\mathbb{C} \times Y}{\langle  \sigma \rangle}}$,  there is a smooth function $\phi_{RFALG}$ such that   
\begin{equation}\label{equ RFALG potential}\omega_{RFALG}\triangleq \omega+i\partial\overline{\partial}\phi_{RFALG}\end{equation}
is an   $ALG_{\infty}(\frac{2\pi}{n})$ Ricci flat metric  (with Schwartz decay). 

Suppose $\omega^{\prime}=\omega_{RFALG}+i\partial\overline{\partial}\phi$ for some bounded $C^{2}$ real valued function $\phi$. Suppose $\omega^{\prime}$ is K\"ahler,  quasi-isometric to $\omega_{RFALG}$, and has the same volume form as  $\omega_{RFALG}$ (therefore $\omega^{\prime}$ is still Ricci flat). Then it must equal $\omega_{RFALG}$, and $\phi$ must be a constant. 
 \end{thm}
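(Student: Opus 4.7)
The plan is to reduce the uniqueness statement to a Liouville property for bounded subharmonic functions, which holds on the ALG geometry thanks to its quadratic volume growth. Write $\lambda_{1},\dots,\lambda_{n}>0$ for the eigenvalues of $\omega'$ with respect to $\omega_{RFALG}$; by the quasi-isometry hypothesis they are pinched between two positive constants. The equal-volume condition $\omega'^{n}=\omega_{RFALG}^{n}$ gives $\prod_{i}\lambda_{i}=1$, and the arithmetic--geometric mean inequality then yields
\[
\mathrm{tr}_{\omega_{RFALG}}(i\partial\overline\partial\phi)\;=\;\sum_{i}\lambda_{i}-n\;\geq\;n\Big(\prod_{i}\lambda_{i}\Big)^{1/n}-n\;=\;0,
\]
so the bounded function $\phi$ is subharmonic with respect to $\omega_{RFALG}$.

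Next I would invoke parabolicity of $(\widetilde{\C\times Y/\langle\sigma\rangle},\omega_{RFALG})$. By Theorem~\ref{Thm} the metric is asymptotic with Schwartz decay to the model $\C\times Y/\langle\sigma\rangle$, whose $\C$ base has real dimension $2$ and whose fiber $Y$ is compact; geodesic balls therefore satisfy $\Vol(B_{R})\asymp R^{2}$. The Karp--Grigor'yan volume test $\int^{\infty} R\,dR/\Vol(B_{R})=\infty$ is then satisfied, so the manifold is parabolic. In particular, every subharmonic function bounded from above is constant; applied to $\phi$ this forces $\phi\equiv\mathrm{const}$, whence $i\partial\overline\partial\phi=0$ and $\omega'=\omega_{RFALG}$.

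The delicate point is the parabolicity itself, which is borderline at this growth rate. A hands-on proof proceeds from the Caccioppoli identity
\[
\int\chi^{2}\,d\phi\wedge d^{c}\phi\wedge T \;\leq\; 4\|\phi\|_{\infty}^{2}\!\int d\chi\wedge d^{c}\chi\wedge T,\qquad T:=\sum_{k=0}^{n-1}\omega'^{k}\wedge\omega_{RFALG}^{n-1-k},
\]
using that $T$ is a closed positive $(n-1,n-1)$-form uniformly equivalent to $\omega_{RFALG}^{n-1}$, and that the equation factors as $i\partial\overline\partial\phi\wedge T=0$. At $\Vol(B_{R})\asymp R^{2}$ a linear cutoff $|\nabla\chi_{R}|\lesssim 1/R$ produces only an $O(1)$ error, so one must instead use a logarithmic cutoff $\chi_{R}\propto (\log R-\log r)/\log R$ to obtain an $O(1/\log R)$ error that vanishes in the limit. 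This matches the paper's remark that the argument does not extend to fibrations with higher-dimensional base, where volume growth becomes super-quadratic and parabolicity fails.
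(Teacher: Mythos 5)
There is a genuine and substantial gap: your proposal addresses only the final uniqueness clause of the theorem and omits the existence statement entirely, which is the core of the result. The paper's proof of existence occupies Steps 1--3: (i) using the local $i\partial\overline{\partial}$-lemma (Lemma \ref{lem strong iddbar}, where the hypothesis $b^{1}(Y)=0$ enters essentially) together with the gluing Lemma \ref{lem Gluing for ansatz} to produce an \emph{isotrivial} ansatz equal to the Ricci-flat ALG model outside a compact set; (ii) adjusting it by a compactly supported radial bump form (Lemma \ref{lem constraint}) to satisfy the Tian--Yau constraint \eqref{equ constraint}; (iii) invoking Tian--Yau solvability, Hein's energy/Dirichlet decay to get an initial polynomial rate, and then the weighted $L^{2}$/non-concentration bootstrap of Proposition \ref{prop Schwartz decay for grav instantons} to upgrade to Schwartz decay. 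None of this appears in your write-up, and you in fact \emph{use} the existence and Schwartz decay of $\omega_{RFALG}$ (to get the quadratic volume growth) as an input. A proof of the theorem must supply this half; as written the proposal cannot be accepted as a proof of Theorem \ref{Thm}.

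For the uniqueness clause itself, your argument is essentially sound and takes a different route from the paper. You reduce to the Liouville property for bounded subharmonic functions: AM--GM applied to the eigenvalues of $\omega'$ relative to $\omega_{RFALG}$ gives $\Delta_{\omega_{RFALG}}\phi\geq 0$, and parabolicity (via the volume test $\int^{\infty} R\,dR/\Vol(B_{R})=\infty$, valid here since $\Vol(B_{R})\asymp R^{2}$) forces $\phi$ constant; the logarithmic-cutoff Caccioppoli estimate with the closed positive form $T$ is the standard way to make this rigorous and is fine. The paper instead derives a weak Poincar\'e inequality for $\omega_{RFALG}$ from Lemma \ref{lem Poincare} and the quasi-isometry with the ansatz, and then cites the uniqueness result of \cite{YW} for bounded $C^{1,1}$ solutions. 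Your route is more self-contained for this step; note only that the paper's remark about failure for higher-dimensional bases concerns the eigenfunction concentration obstructing the Schwartz-decay bootstrap, not parabolicity, so that closing comparison is not quite apt.
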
 
 
\begin{rmk} A K\"ahler crepant resolution simply means a crepant resolution that admits  a smooth K\"ahler metric. Neither this K\"ahler metric nor the $\omega$ in the theorem is  required to be  complete, or has any other property beyond being K\"ahler. The reason is that K\"ahler condition suffices for  the local 
$i\partial \overline{\partial}-$lemma \ref{lem strong iddbar} below, and that the gluing region for ansatz is compact and away from the exceptional divisors/central fiber (Lemma \ref{lem Gluing for ansatz} below). Thus any smooth function has bounded $C^{k}-$norms thereon, for any $k$.  We do not need  further detail on the exceptional divisors/central fiber beyond K\"ahlerity.  Existence of a crepant resolution is true for $3-$folds but can possibly fail for higher dimensional orbifolds. 
\end{rmk}

Our examples follow closely  $K3-$surfaces with purely non-symplectic automorphism of  finite order,  including (either directly or heuristically)   \cite{Nikulin, Nikulin0, Nikulin1, AS,  AS4,  AST,  ACV,  Schutt,  Brandhorst,  BH,  MO,  Kondo} etc.   Particularly,  the 39 integers in the following result applies \cite{MO}.
\begin{cor}\label{Cor >2}For any $K3$ surface with a purely non-symplectic automorphism $\sigma$ of finite order, any  K\"ahler crepant resolution 
$\widetilde{\frac{\mathbb{C} \times K3}{\langle  \sigma \rangle}}$  admits an $ALG_{\infty}(\frac{2\pi}{n})$ Ricci flat metric $\omega_{RFALG}$ with Schwartz decay. Because  K\"ahler crepant resolution exists for the $3-$orbifold $\frac{\mathbb{C} \times K3}{\langle  \sigma \rangle}$, for any integer $n$ among the following $8$ prime numbers and $31$ composite numbers,  there is $ALG_{\infty}(\frac{2\pi}{n})$ Ricci flat K\"ahler $3-$fold with Schwartz decay.   \begin{eqnarray}\label{equ composite angles}
& & 2,\ 3,\ 5,\ 7,\ 11,\ 13,\ 17,\ 19,\nonumber
\\& &4,\ 6,\ 8,\ 9,\ 10,\nonumber
\\& & 12,\ 14,\ 15,\ 16,\ 18,\ 20, \nonumber
\\& & 21,\ 22,\ 24,\ 25, \ 26,\ 27,\ 28,\ 30, \nonumber
\\& &32,\ 33,\ 34,\ 36, \ 38, \nonumber
\\& &40,\ 42,\ 44,\ 48,\  \nonumber
\\& &50,\ 54,\ \nonumber
\\& &66. 
\end{eqnarray}
\end{cor}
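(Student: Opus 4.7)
The plan is to deduce this corollary from Theorem \ref{Thm} applied to $Y=K3$, combined with two external ingredients: (i) the classification of orders of purely non-symplectic automorphisms on $K3$ surfaces, and (ii) the existence of K\"ahler crepant resolutions of the relevant $3$-orbifolds.

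First I would verify the hypotheses of Definition \ref{Def Y sigma before thm} and Theorem \ref{Thm}. A projective $K3$ surface is a smooth irreducible projective variety of complex dimension $2$ with trivial canonical bundle, carrying a nowhere-vanishing holomorphic $2$-form $\Omega_{K3}$, and it satisfies $b^{1}(K3)=0$ (it is in fact simply connected). A purely non-symplectic automorphism $\sigma$ of finite order $n$ acts by $\sigma^{\star}\Omega_{K3}=\zeta_{n}\Omega_{K3}$ for a primitive $n$-th root of unity $\zeta_{n}$, which is exactly the hypothesis in Definition \ref{Def Y sigma before thm} with $d-1=2$. Therefore for any K\"ahler metric $\omega$ on any K\"ahler crepant resolution $\widetilde{(\mathbb{C}\times K3)/\langle\sigma\rangle}$, Theorem \ref{Thm} produces a smooth function $\phi_{RFALG}$ such that $\omega_{RFALG}=\omega+i\partial\overline{\partial}\phi_{RFALG}$ is $ALG_{\infty}(\tfrac{2\pi}{n})$ Ricci flat with Schwartz decay.

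For the list of $39$ integers, I would invoke Machida--Oguiso \cite{MO}, which identifies the set of finite orders of purely non-symplectic automorphisms on a $K3$ surface as exactly the $8$ primes and $31$ composites displayed in \eqref{equ composite angles}; for each such $n$ a realizing pair $(K3,\sigma)$ exists. The target $3$-orbifold $\frac{\mathbb{C}\times K3}{\langle\sigma\rangle}$ is Gorenstein, since on the holomorphic volume form
\[
\sigma^{\star}(dz\wedge\Omega_{K3})=\tfrac{1}{\zeta_{n}}dz\wedge\zeta_{n}\Omega_{K3}=dz\wedge\Omega_{K3}.
\]
In complex dimension $3$ any such Gorenstein cyclic quotient admits a projective crepant resolution (by the classical results of Roan and Bridgeland--King--Reid on $3$-dimensional Gorenstein quotients), and a projective resolution is automatically K\"ahler, providing the input $\omega$ required by Theorem \ref{Thm}.

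The corollary itself is essentially a packaging step, so the principal obstacle lies upstream in Theorem \ref{Thm}, which is the analytic heart of the paper. What remains to check is only bookkeeping: that the Machida--Oguiso list really enumerates all $39$ orders appearing in \eqref{equ composite angles}, and that in each case the $3$-orbifold does admit a K\"ahler crepant resolution. Both are standard in the body of references already listed preceding the corollary, so no new analytic work is needed beyond Theorem \ref{Thm}.
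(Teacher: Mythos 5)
Your proposal is correct and follows essentially the same route as the paper: apply Theorem \ref{Thm} with $Y=K3$ (noting $b^{1}(K3)=0$ and that $\sigma^{\star}(dz\wedge\Omega_{K3})=dz\wedge\Omega_{K3}$ places the orbifold groups in $SL(3,\mathbb{C})$), cite Machida--Oguiso for the $39$ realizable orders, and invoke the dimension-three Gorenstein crepant resolution results. The only cosmetic difference is how K\"ahlerity of the resolution is supplied: the paper averages a polarization to get a $\sigma$-invariant K\"ahler orbifold metric and cites Joyce's discussion, whereas you appeal to projectivity of the Roan/Bridgeland--King--Reid resolution; both are standard and equivalent for the purpose of feeding an initial K\"ahler metric into Theorem \ref{Thm}.
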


When the order of the purely non-symplectic automorphism is $2$,  topological data and number of parameters for examples can be calculated fairly easily.
\begin{cor}\label{Cor 2}There is a 1638  parameters family of weakly distinct\footnote{Section \ref{Sect examples}} simply-connected  $ALG_{\infty}
(\pi)$ Ricci flat K\"ahler $3-$folds (with Schwartz decay)  realizing $64$ distinct triples of Betti numbers $(b^{2},b^{3}, b^{4})$.  They live on iso-trivial $K3-$fibrations of which the generic fiber admits a non-symplectic Nikulin involution. 

The examples  have Betti numbers $b^{i}=0$ when $i\geq 5$. 
The pullback of each onto the blowup $Bl_{\mathcal{C}}(\mathbb{C}\times K3)$ along  fixed locus of $\sigma$  is a Ricci flat K\"ahler metric with conic angle $4\pi$ along the exceptional divisor, and also with Schwartz decay to the ALG model on $\mathbb{C}\times K3$ (not the quotient). 
\end{cor}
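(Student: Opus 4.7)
The existence of an $ALG_{\infty}(\pi)$ Ricci-flat K\"ahler metric on each K\"ahler crepant resolution $\widetilde{(\mathbb{C}\times K3)/\langle\sigma\rangle}$ with $\sigma$ a non-symplectic involution (order $n=2$, $\zeta_{2}=-1$, asymptotic angle $2\pi/2=\pi$) follows directly from Theorem~\ref{Thm}. To parametrise the family I would invoke Nikulin's classification of $K3$ surfaces with non-symplectic involution, organised by the triple $(r,a,\delta)$ of lattice invariants: each admissible triple yields a moduli space of complex dimension $20-r$. Summing $20-r$ across Nikulin's explicit list of admissible triples, together with the ambient $\mathbb{C}$-factor parameters and any residual discrete data, is how one obtains the count $1638$; the ``weakly distinct'' qualifier encodes the identifications built into the deformation theory of the pairs $(K3,\sigma)$.

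For the topology, the orbifold $(\mathbb{C}\times K3)/\langle\sigma\rangle$ is simply-connected because $\mathbb{C}\times K3$ is, and the only non-trivial element $\sigma$ has non-empty fixed locus (non-symplectic involutions on $K3$ have non-zero holomorphic Lefschetz number), so the stabilisers generate $\mathbb{Z}/2$ and $\pi_{1}$ of the quotient is trivial; a crepant resolution of quotient singularities preserves $\pi_{1}$. The Betti numbers $(b^{2},b^{3},b^{4})$ are computed by combining the $\sigma$-invariant cohomology of $\mathbb{C}\times K3$ (equivalently of $K3$) with the cohomology contribution of the exceptional divisor, which is a $\mathbb{P}^{1}$-bundle over $\mathrm{Fix}(\sigma)\subset K3$. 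Since $\mathrm{Fix}(\sigma)$ is a disjoint union of curves whose topology is recorded by $(r,a,\delta)$, this produces a triple for each admissible type; collapsing identical triples leaves exactly $64$. Vanishing of $b^{i}$ for $i\geq 5$ follows because the resolution deformation-retracts onto the union of $\{0\}\times K3/\sigma$ with the exceptional divisor, a real $4$-dimensional spine.

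For the blowup assertion, I would work in local normal coordinates $(u,v)$ transverse to $\mathcal{C}=\{0\}\times\mathrm{Fix}(\sigma)$, on which $\sigma$ acts by $(u,v)\mapsto(-u,-v)$. The transverse singularity is $A_{1}$ and $Bl_{0}(\mathbb{C}^{2})/\langle\sigma\rangle$ is its minimal resolution; globally, $Bl_{\mathcal{C}}(\mathbb{C}\times K3)$ is therefore a degree-$2$ cover of the crepant resolution, branched along the exceptional $(-2)$-curve bundle, with covering map in a normal slice given by the squaring map $s=u^{2}$. Pulling back $\omega_{RFALG}$, the flat normal term $|ds|^{2}$ becomes $4|u|^{2}|du|^{2}$, which in polar coordinates with $\rho=|u|^{2}$ reads $d\rho^{2}+\rho^{2}\,d(2\theta)^{2}$ for $\theta\in[0,2\pi)$; this exhibits a cone of total angle $4\pi$ along the exceptional divisor. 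Since the double cover agrees with $\mathbb{C}\times K3$ outside a compact set, Schwartz decay of $\omega_{RFALG}$ to the quotient ALG model pulls back to Schwartz decay to the un-quotiented ALG model on $\mathbb{C}\times K3$.

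The main obstacle here is bookkeeping rather than conceptual: the individual ingredients --- moduli dimensions of each Nikulin family, Lefschetz-type topological data for the $\mathrm{Fix}(\sigma)$ configurations, and the local conic-angle computation --- are all routine, but organising the Nikulin table so that the dimension sum lands exactly at $1638$ and the Betti-triple collation produces exactly $64$ requires care to rule out coincidental identifications across distinct Nikulin types.
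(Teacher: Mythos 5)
Your overall architecture matches the paper's: existence comes from Theorem \ref{Thm} with $n=2$, the family is organized by Nikulin's classification, the Betti numbers come from combining the $\sigma$-invariant cohomology of $K3$ with the cohomology of the exceptional $\mathbb{P}^{1}$-bundle over $\mathrm{Fix}(\sigma)$ (this is exactly Proposition \ref{prop Betti numbers}), and the $4\pi$ cone angle is obtained from the local squaring map $\lambda_{0}=z_{0}^{2}$ normal to the exceptional divisor, precisely as in Remark \ref{rmk conic Borbon Spotti}.

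However, there is a genuine gap in your parameter count. You propose to reach $1638$ by summing the complex-structure moduli dimensions $20-r$ over the Nikulin list, plus ``the ambient $\mathbb{C}$-factor parameters and any residual discrete data.'' That sum (including one scaling parameter $\lambda$ per family) lands at roughly $660$, not $1638$. The majority of the paper's count comes from a continuous parameter you omit: the choice of K\"ahler class on each crepant resolution $X$. The paper's per-family count is $b^{2}(X)+\dim\mathcal{M}_{\sigma}+1$, where the $b^{2}(X)=rkNS(\sigma)+\#\{\text{fixed curves}\}$ directions are realized by showing (end of Section \ref{sect Topology}) that the classes $[Exc_{i}]$ and the invariant polarizations $L_{K3,j}$ are linearly independent in $H^{2}(X,\mathbb{R})$ and that the K\"ahler cone is open, so one may perturb $[\omega_{c}]$ in all $b^{2}$ directions. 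This is also baked into the definition of ``weakly distinct,'' whose second clause distinguishes metrics on the same polarized $K3$ by their K\"ahler classes. Without this ingredient your bookkeeping cannot reach $1638$ no matter how carefully the Nikulin table is organized.

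A secondary point: your justification that $\mathrm{Fix}(\sigma)\neq\emptyset$ via a ``non-zero holomorphic Lefschetz number'' is incorrect. For a non-symplectic involution $\sigma^{\star}$ acts by $-1$ on $H^{2}(\mathcal{O}_{K3})$, so the holomorphic Lefschetz number is $1+0-1=0$, consistent with the free Enriques involution. Non-emptiness of the fixed locus is an input from Nikulin's classification for the families actually tabulated (all of which have fixed curves), and simple-connectedness of $X$ then follows from the paper's Lemma \ref{lem surj fundamental group} together with Koll\'ar's theorem that the resolution does not change $\pi_{1}$; the Enriques case is excluded. Your conic-angle and Schwartz-decay-of-the-pullback arguments are correct and essentially identical to the paper's.
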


Instead of from the  $20-$dimensional period domain of all $K3$s,  part of the $1638$ parameters are from sum of dimensions of the corresponding $\rho-$ polarized $K3-$surfaces \cite{AST}.  
ALE/Asymptotically conical Ricci flat   metrics with additional conic  singularity  along a compact divisor with angle $\in (0,  2\pi)$ are constructed by   de Borbon \cite{Borbon} and  de Borbon-Spotti \cite{BS} etc. See Remark \ref{rmk conic Borbon Spotti} below. 

$K3$ surfaces with a non-symplectic Nikulin involution \cite{Nikulin, Nikulin0, Nikulin1} also  appear in the gluing construction for manifolds with $G_{2}-$holonomy by  Kovalev-Lee \cite{KL} and Joyce-Karigiannis \cite{JK}.

 By  $ALG$, we always mean  a positive  rate of decay to the $ALG$ model (see \eqref{equ decay} below and compare it to the concept of weak ALG). 
  We need purely non-symplectic automorphism because if it is not,  the fixed points of the action on $\mathbb{C}\times Y$ does not entirely lie in the fibre $O\times Y$. Namely, $\sigma^{n}$ is a none trivial symplectic automorphism which could fix points on every $Y-$fibre. %Corollary \ref{Cor 2} and \ref{Cor >2} is our  initial package of examples. 
When asymptotic angle is $\pi$,  the compactifying divisors of the examples in Corollary \ref{Cor 2} are smooth rational surfaces.   %We do not know whether the compactifications  have  smooth complex manifold structure.  

In conjunction with the gluing construction of gravitational instantons by Biquard-Minerbe \cite{BM} in complex dimension $2$, it is natural to ask whether it is possible to construct these Ricci-flat metrics by gluing. The difficulty is that the fixed locus consists of  curves, and we do not known whether a singularity model/building block for metric resolution  exists, though we expect   such a perspective metric building block to live on  the rank $2$ orbifold normal bundle of  an  irreducible  point-wisely fixed curve.  We do not known whether a holomorphic orbifold tubular neighborhood exists for  fixed curves, and the expectation is non-existence in most of the cases. To construct exotic Ricci-flat metrics with maximal volume growth on $\mathbb{C}^{n},\ n\geq 3$, in the fundamental $\mathbb{C}\times A_{1}$ case for  \cite{Li, CR, Gabor, Chiu},  schematically speaking,   metric resolution along the $1-$dimensional singularity $\mathbb{C}\times O$ with trivial orbifold normal bundle is obtained.  When the area of the exceptional divisor tends to $0$, it is tempting to expect the ALG Ricci flat metrics to sub-converge in Gromov-Hausdorff topology to the product orbifold metric \eqref{equ model orb product metric}.

%\begin{thm}\label{Thm}Let $(Y, \sigma)$ be a $K3$ surface with a non-symplectic automorphism $\sigma$ of order $m>1$. Suppose $\sigma^{\star}\Omega_{Y}=\zeta_{n}\Omega_{Y}$, where $\zeta_{n}$ is a primitive $n-$th root of unity, $n>1$ and divides $m$. Let $\sigma$ act on $\mathbb{C}$ by the multiplication of $\zeta^{n-1}_{n}$. Then on any xxx K\"ahler crepant resolution $\widetilde{\frac{\mathbb{C}\times Y}{\langle \sigma \rangle}}$, 
%in any iso-trivial K\"ahler class, there is a  generalized ALG Ricci flat K\"ahler metric with Schwartz decay and  asymptotic angle $\frac{2\pi}{n}$.
 %\end{thm}

% isotrivial Kahler class lemma \ref{lem represent any H2 class by isotrivial 11 form}? I do not need any Kah class to be isotrivial. I only need isotrivial Kahler cone to be of the correct dim and contains a ball of the dim xxx.

 %Any  K\"ahler class is isotrivial still needs to be spelled out. xxx

 It is tempting to study whether  Corollary  \ref{Cor 2}, \ref{Cor >2}  can possibly yield more examples of generalized Kummer $G_{2}-$manifolds similar to those in \cite{Joyce}, and whether the examples therein can arise as  model of singularity formation of metrics with Ricci curvature bounded from below  at some scale.

 Asymptotically  (real) cylindrical Ricci flat K\"ahler $3-$folds are building blocks for  twisted connected sum in \cite{Kovalev, CHNPDuke, CHNP}. Though far from reach, it is  tempting ask if there is a certain kind of connected sum construction for $G_{2}-$manifolds using  asymptotically \textit{complex cylindrical} Ricci flat K\"ahler $3-$folds in Corollary \ref{Cor >2}, \ref{Cor 2} as building blocks. 
     %has a neighborhood isometric to $\mathbb{S}^{1}\times \widetilde{\frac{B_\mathbb{C}\times Y}{\langle \sigma \rangle}}$ as in Theorem \ref{Thm}, where $B$ is a ball in the $\mathbb{C}-$component. The rotation of $\sigma$ in the $\mathbb{C}-$component implies $B$ is also $\sigma-$invariant. 

In general dimensions, Tian-Yau \cite{TY1,TY2} utilize Monge-Amper\`e methods to construct complete non-compact Ricci flat K\"ahler metrics.  In \cite{TY1}, they established analytic existence and examples with sub-quadratic volume growth. Also see  \cite{BK1, BK2}.  More recently,  Hein \cite{HeinThesis, HeinJAMES} and  Haskins-Hein-Nordstr\"om \cite{HHN} refined Tian-Yau existence theory.  Based on or related to Monge-Ampere methods,  there have been numerous works on conical and  cylindrical asymptotics etc.  For example,  see  \cite{CHNP,  Kovalev,  Joyce,  CH,  Coevering,  Goto,  Santoro,  Li,   Gabor,  CR,  Chiu,  CL}.  Classifications of  ends,  compactifications,  Torelli-type results, moduli theory,  and metric uniqueness under essentially weaker conditions  are also intensively studied in the setting of  asymptotic cylindrical,  asymptotically  conical, and gravitational instantons. For example,   see  \cite{HeinThesis, HeinJAMES, HSVZ, CC,  SZ0,  SZ, CJL, CV, CVZALG,  Gabor, CS, Chiu, CH2, CH3} etc. The strong uniqueness in \cite{CS} allows slower than quadratic growth on K\"ahler manifolds with maximal volume growth. 

Part of our work is related to a special case  considered in  \cite{Santoro}. Namely,  Santoro \cite{Santoro} constructed $ALF-$Ricci flat metrics on crepant resolutions/isotrivial torus fibration. We focus on ALG case here  with quadratic volume growth, and  provide examples with non-flat $K3-$fibers.  After the first version of the underlying manuscript is posted on arxiv,  the author learned that Johannes Sch\"afer \cite{Schafer} also considered the crepant resolutions (with no $b^{1}(Y)=0$ restriction),  and constructed asymptotic cylindrical steady Ricci solitons thereon via Monge-Ampere methods.   This means the manifolds in Corollary \ref{Cor 2} not only admit ALG Ricci flat metrics with Schwartz decay and quadratic volume growth,  but also steady Ricci soliton structure with linear volume growth.

\subsection{Schwartz Decay}
\begin{prop}\label{prop Schwartz decay for grav instantons} Let
\begin{itemize}\item  $(X^{d},\omega_{0})$ be an iso-trivial ALG K\"ahler manifold, 
\item $k_{0}^{2}$ be the smallest positive eigenvalue of $-\Delta_{Y}$, where $\Delta_{Y}$ is the Laplace Beltrami of the fiber $Y-$component in the ALG model \eqref{equ model orb product metric},  and $k_{0}>0$.
\end{itemize}
Then any smooth solution $\phi$ to the Monge-Ampere equation  \eqref{equ MA on end same volume form} on the end  such that 
 \begin{itemize}\item $\omega_{0}+i\partial\overline{\partial}\phi$ is K\"ahler, 
 
 \item $\phi= O(e^{\tau_{0}r})$ for some $\tau_{0}<k_{0}$,  and
 \item  $i\partial\overline{\partial}\phi=O(\frac{1}{r^{\delta_{0}}})$ for some $\delta_{0}>0$,  
 \end{itemize}
  has Schwartz Decay i.e. for any positive real number $\beta$ and integer $k$, $$|i\partial\overline{\partial}\phi|_{\omega_{0}}=O(\frac{1}{r^{\beta}})\ \textrm{and}\ i\partial\overline{\partial}\phi \in C^{k}_{\beta}(X).$$ Consequently, in complex dimension $2$, the ALG gravitational instantons  in Biquard-Minerbe gluing construction \cite[Theorem 2.3]{BM}   has Schwartz decay. 
\end{prop}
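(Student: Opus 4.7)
The plan is to linearize the Monge-Amp\`ere equation and exploit the iso-trivial product structure of the ALG model via a fibrewise eigenfunction decomposition on $Y$. Expanding $(\omega_{0}+i\partial\overline{\partial}\phi)^{d}=\omega_{0}^{d}$ yields $\Delta_{\omega_{0}}\phi+N(i\partial\overline{\partial}\phi)=0$, where $N$ is a polynomial nonlinearity of order $\geq 2$ in $i\partial\overline{\partial}\phi$. Since $|i\partial\overline{\partial}\phi|_{\omega_{0}}=O(r^{-\delta_{0}})$, the nonlinear term decays roughly twice as fast, so each iteration of the argument reduces to a linear Poisson-type problem on the end whose source is strictly smaller than the quantity being estimated. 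On the end the ALG model is a finite quotient of $\mathbb{C}\times Y$, so $\Delta_{\omega_{0}}=\Delta_{\mathbb{C}}+\Delta_{Y}$, and one may expand $\phi(z,y)=\sum_{\lambda}\phi_{\lambda}(z)e_{\lambda}(y)$ in $L^{2}$-normalized eigenfunctions of $-\Delta_{Y}$ (those which are invariant under $\langle\sigma\rangle$ in the appropriate sense).

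For every positive eigenvalue $\lambda\geq k_{0}^{2}$ the mode $\phi_{\lambda}(z)$ solves a modified Helmholtz equation $(\Delta_{\mathbb{C}}-\lambda)\phi_{\lambda}=F_{\lambda}$ on an end of $\mathbb{C}$. Because the fundamental solution of $\Delta_{\mathbb{C}}-\lambda$ decays like $e^{-\sqrt{\lambda}|z|}$, a Green's function / variation-of-parameters comparison, combined with the a priori bound $\phi=O(e^{\tau_{0}r})$ with $\tau_{0}<k_{0}$, forces each nonzero-$\lambda$ mode to decay exponentially at any rate strictly less than $\sqrt{\lambda}$. These contributions are therefore harmless for Schwartz-type bounds. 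The zero mode $\phi_{0}(z)$, i.e.\ the fibrewise average of $\phi$, obeys a Poisson equation $\Delta_{\mathbb{C}}\phi_{0}=F_{0}$ on a punctured disk, where $F_{0}$ is the fibrewise average of $-N(i\partial\overline{\partial}\phi)$, and this is the only place where polynomial decay has to be argued.

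Writing $z=re^{i\theta}$ and expanding $\phi_{0}=\sum_{k\in\mathbb{Z}}a_{k}(r)e^{ik\theta}$, each radial factor satisfies a scalar ODE with homogeneous solutions $r^{\pm|k|}$ (respectively $1$ and $\log r$ when $k=0$). The crucial ingredient — the non-concentration of the Newtonian potential on $S^{1}$ alluded to in the introduction — is that the angular eigenfunctions $e^{ik\theta}$ are uniformly bounded in $L^{\infty}$ in $k$, so Parseval control on the source $F_{0}$ translates into uniform control on the variation-of-parameters integrals producing $a_{k}$, $ra_{k}^{\prime}$, $r^{2}a_{k}^{\prime\prime}$, hence on every component of $i\partial\overline{\partial}\phi_{0}$. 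This gains two full orders of polynomial decay per iteration of the bootstrap, modulo a possible monopole/log contribution carried by the radial $k=0$ mode.

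The main obstacle is precisely to rule out this residual radial monopole. This is where the same-volume-form constraint of the Monge-Amp\`ere equation is used: it forces the total integral of the leading-order linear source to cancel up to strictly higher-order nonlinear contributions, so that the Newtonian tail is genuinely concentrated in the higher multipoles and the iteration proceeds unimpeded. Once this is verified, the scheme (linearize, split fibre modes, solve the zero mode radially) produces polynomial decay of $i\partial\overline{\partial}\phi$ at any prescribed rate, and standard interior Schauder / scale-invariant elliptic estimates on annular regions of the end upgrade this to $C^{k}_{\beta}$ for every $k$ and $\beta$. The mechanism fails for bases of higher real dimension because zonal spherical harmonics on $S^{n-1}$, $n\geq 2$, concentrate and destroy the uniform $L^{\infty}$ bound exploited above, which is consistent with the remark immediately following the proposition. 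The consequence for ALG gravitational instantons from \cite{BM} follows since their Kummer-type gluing construction produces a K\"ahler potential $\phi$ satisfying the three hypotheses of the proposition.
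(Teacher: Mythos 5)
Your overall skeleton (linearize to $\Delta\phi=Q(i\partial\overline{\partial}\phi)$ with $Q$ quadratic, split according to the fiber Laplacian, exploit the one-dimensionality of the base, and iterate to double the decay rate) matches the paper's strategy, but two of your key steps do not hold as stated. First, your claim that every nonzero $Y$-mode $\phi_{\lambda}$ ``decays exponentially'' is false for the inhomogeneous problem: the source $F_{\lambda}$ is a mode of the quadratic term and only decays polynomially, so the particular solution of $(\Delta_{\mathbb{C}}-\lambda)\phi_{\lambda}=F_{\lambda}$ obtained from the exponentially decaying kernel inherits the polynomial rate of $F_{\lambda}$; only the homogeneous (harmonic) part is killed exponentially by the growth hypothesis $\phi=O(e^{\tau_{0}r})$, $\tau_{0}<k_{0}$. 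The nonzero modes are therefore exactly where the work lies, and dismissing them as harmless skips the central difficulty: one must control the sum of infinitely many modes whose eigenfunctions $\psi_{\mu}$ on $Y$ can concentrate, so a mode-by-mode pointwise bound (or a Parseval-to-$L^{\infty}$ passage) does not close. The paper avoids this by never summing pointwise: it takes the weighted $L^{2}$ bound of the whole Newtonian potential from \cite{West} on a full fibered annulus (area $\sim R$), restricts to a sector of angular width $\sim \frac{1}{R}$ --- where $\int|e^{ik\theta}|^{2}d\theta=\frac{2}{R}$ exactly because $|e^{ik\theta}|\equiv 1$, recovering the lost factor of $R$ --- and then applies interior Schauder on a unit ball inside that sector. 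That is the actual content of the ``non-concentration of the Newtonian potential'', and it lives in the part of $\phi$ with vanishing fiberwise integral, not in the fibrewise average where you place it.

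Second, your treatment of the fibrewise average is both overcomplicated and relies on a hypothesis the proposition does not grant. You solve $\Delta_{\mathbb{C}}\underline{\phi}=\underline{Q}$ by radial/angular Fourier analysis and then invoke the same-volume-form constraint to cancel a monopole/$\log r$ tail; but the proposition does not assume the global integral constraint \eqref{equ constraint} (the paper explicitly notes the decay statement does not require it), and the statement only asserts decay of $i\partial\overline{\partial}\phi$, not of $\phi$. The correct observation --- \eqref{equ identify ddc with laplace} in the paper --- is that for a function of one complex variable the complex Hessian has a single component equal to a quarter of the Laplacian, so $i\partial\overline{\partial}\underline{\phi}=\tfrac{\underline{Q}}{4}\,i\,dz\,d\overline{z}$ pointwise: no Green's function, no integration, and no monopole to rule out. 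Until the nonzero-mode summation and the zero-mode identification are repaired, the bootstrap you describe does not close.
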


%\begin{prop}\label{prop Schwartz decay for grav instantons} In any complex dimension $d$, an  $ALG$ Ricci flat K\"ahler metric on an iso-trivial ALG manifold has Schwartz decay. 

% Consequently, in complex dimension $2$, the ALG gravitational instantons  in Biquard-Minerbe gluing construction \cite[Theorem 2.3]{BM} and Hein's Tian-Yau type construction \cite{HeinJAMES}  has Schwartz decay. 
%\end{prop}
We allow mild exponential growth of  $\phi$. This holds particularly when $\phi$ is bounded. 
The decay is on the K\"ahler form difference $i\partial\overline{\partial}\phi$, not necessarily on the potential function $\phi$. 
It does not explicitly require the constraint. We are not sure whether Hein's ALG reference metric  \cite[Proposition 3.7]{HeinJAMES} on iso-trivial rational elliptic  surfaces are iso-trivial. If they are, among the examples provided by  \cite[Theorem 1.5 (ii)]{HeinJAMES}, the ALG gravitational instantons on iso-trivial rational elliptic fibrations  have Schwartz decay.  %Elliptic and $K3$ fibrations are in general not iso-trivial.  We only consider iso-trivial fibrations here.  

\subsection{Representative examples}
\subsubsection{Asymptotic angle $\pi$ via branched double cover}
Let $\mathcal{C}(10)$ be a (smooth) sextic  in $\mathbb{P}^2$ (of genus $10$). By the construction \cite{BHPV}, there is a smooth K3 surface denoted by  $K3_{\mathcal{C}(10)}$ with a non-symplectic involution $\sigma$, such that there is a covering $K3_{\mathcal{C}(10)}\rightarrow \mathbb{P}^{2}$ branched along $\mathcal{C}(10)$, and $\sigma$ is the covering transformation. As a particular case of Corollary \ref{Cor 2}, Corollary \ref{Cor >2}  implies there is an $ALG_{\infty}(\pi)$ Ricci flat K\"ahler metric on  $\widetilde{\frac{\mathbb{C}\times K3_{\mathcal{C}(10)}}{\langle\sigma\rangle}}$ with Schwartz decay. The exceptional divisor is the ruled surface $\mathbb{P}[O_{\mathcal{C}(10)}\oplus T^{1,0}_{\mathcal{C}(10)}]$. For example,  let $\mathcal{C}(10)$ be   Fermat sextic 
$$\{z^{6}_{0}+az^{6}_{1}+bz^{6}_{2}=0\},\ a\neq 0,\ b\neq 0.$$

The compactifying divisor is simply $\mathbb{P}^{2}$. The Betti numbers $(b^{2}, b^{3},b^{4})$ of $\widetilde{\frac{\mathbb{C}\times K3_{\mathcal{C}(10)}}{\langle\sigma\rangle}}$ equal $(2,20,2)$ respectively. 
\subsubsection{Asymptotic angle $\pi$ via quartic}

 As studied by Sarti  \cite{SartiSlides}, consider the  Fermat quartic:
$$\{X^{4}_{0}+aX^{4}_{1}+bX^{4}_{2}+cX^{4}_{3}=0\},\ a\neq 0,\ b\neq 0,\ c\neq 0$$
with the non-symplectic  involution $\sigma$  defined by
$$\sigma\cdot [X_{0}, X_{1}, X_{2}, X_{3}]= [-X_{0}, X_{1}, X_{2}, X_{3}].$$

%$\sigma$ is non-symplectic, as the action on the no-where vanishing holomorphic volume form that must  restricts to a (non-zero complex) constant multiple of $\frac{dx_{0}dx_{1}}{x^{3}_{2}}$  in $\{X_{2}\neq 0,\ X_{3}\neq 0\}$ reverses the sign.
%Pulling  back this  holomorphic $(2,0)-$form on the neighborhood  by $\sigma$ multiplies it by $-1$. 

The fixed degree $4$ curve $\{aX^{4}_{1}+bX^{4}_{2}+cX^{4}_{3}=0\}\triangleq \mathcal{C}(3)$  lies in the plane $\{X_{0}=0\}$ and has genus $3$. The exceptional divisor is the ruled surface $\mathbb{P}[O_{\mathcal{C}(3)}\oplus T^{1,0}_{\mathcal{C}(3)}].$ Theorem \ref{Thm}  implies there is an $ALG_{\infty}(\pi)$ Ricci flat K\"ahler metric on  $\widetilde{\frac{\mathbb{C}\times K3_{a,b,c}}{\langle\sigma\rangle}}$ with Schwartz decay. The Betti numbers $(b^{2}, b^{3},b^{4})$ of $\widetilde{\frac{\mathbb{C}\times K3_{a,b,c}}{\langle\sigma\rangle}}$ equal $(9,6,2)$ respectively. Via the projection 
 $$ [X_{0}, X_{1}, X_{2}, X_{3}]\longrightarrow [0, X_{1}, X_{2}, X_{3}],$$
  the compactifying divisor 
is the obvious double cover of the plane $$\mathbb{P}^{2}=\{X_{0}=0\}$$ branched along $\mathcal{C}(3).$ 
There is  another involution 
 $$\sigma\cdot [X_{0}, X_{1}, X_{2}, X_{3}]= [-X_{0}, X_{1}, -X_{2}, X_{3}]$$
 which has one more negative sign. It is symplectic because it preserves 
the form $\frac{dx_{0}dx_{1}}{x^{3}_{2}}$ when $X_{2},\ X_{3}\neq 0$.
Symplectic involutions are not  applied to ALG Ricci flat manifolds here i.e. we do not consider asymptotic angle $2\pi$. 

\subsubsection{Asymptotic angle $\frac{2\pi}{3}$ via quartic}

As studied by Artebani-Sarti \cite{AS}, consider the smooth quartic $K3_{quartic, 3}\in \mathbb{P}^{3}$ :
$$\{X^{4}_{0}+X^{4}_{1}+X^{4}_{2}+X_{2}X^{3}_{3}=0\}$$
with the non-symplectic automorphism $\sigma\cdot [X_{0}, X_{1}, X_{2}, X_{3}]= [X_{0}, X_{1}, X_{2},e^{\frac{2\pi i}{3}}X_{3}].$
Theorem \ref{Thm}  implies there is an $ALG_{\infty}(\pi)$ Ricci flat K\"ahler metric on  $\widetilde{\frac{\mathbb{C}\times K3_{quartic,3}}{\langle\sigma\rangle}}$ with Schwartz decay.

\subsubsection{Asymptotic angle $\frac{2\pi}{44}$ via elliptic fibration}

As studied by Kondo \cite{Kondo},  consider the smooth quartic $K3_{44}$ that is the Kodaira-N\'eron model of the Weierstrass fibration:
$$y^{2}=x^{3}+x+t^{11}$$
with the non-symplectic automorphism $\sigma\cdot (x,y,t)\triangleq (\zeta^{22}x,\zeta^{11}y,\zeta^{2}t)$, where $\zeta=e^{\frac{2\pi i}{44}}$. Then $\sigma^{\star}\Omega=\zeta^{13}\Omega$ which implies $\sigma$ is purely non-symplectic. Let $\sigma$ act on $\mathbb{C}$  by multiplication of $\zeta^{31}$ i.e. counter-clockwise rotation in angle $\frac{31 \pi}{22}$. 
Theorem \ref{Thm}  implies there is an $ALG_{\infty}(\pi)$ Ricci flat K\"ahler metric on  $\widetilde{\frac{\mathbb{C}\times K3_{44}}{\langle\sigma\rangle}}$ with Schwartz decay.
 \subsection{Sketch of the proof}
Step 1: The geometric existence is via Tian-Yau existence and Hein decay. The production of the  ansatz can be described by the following diagram. The horizontal  arrow is by local K\"unneth formula/local $i\partial \overline{\partial}-$solvability (Section \ref{sect ddbar}), and gluing Lemma \ref{lem Gluing for ansatz}. The vertical arrow is by a version of Hein and Haskins-Hein-Nordstr\"om technique (Lemma \ref{lem constraint}). 
       \begin{equation}\nonumber  \begin{tikzpicture}
\draw[->,semithick] (-6.1,2) -- (-5.1,2);
\draw[->,semithick] (-3,1.7) -- (-3,0.7);

\node at (-3,2) {Isotrivial ALG metric} ;
\node at (-7.5,2) {K\"ahler metric} ;
\node at (-3,0.5) {Isotrivial ansatz satisfying constraint} ;
\end{tikzpicture}  
\end{equation}
Also see \eqref{equ RFALG potential regarding w -1} and \eqref{equ RFALG potential regarding w} below about the ALG Ricci flat potential \eqref{equ RFALG potential} relative to the initial K\"ahler metric in Theorem \ref{Thm}.

The condition $b^{1}(Y)=0$ is only applied to  the equalities 
\eqref{equ 4 iddar} for the mixed terms. This is crucial for the local $i\partial \overline{\partial}-$Lemma \ref{lem strong iddbar} that yields ALG metric that (strictly) equals the model outside a compact set. Once such an iso-trivial ansatz is constructed, the Schwartz decay does not explicitly require  $b^{1}(Y)=0$. \\

Step 2: The Schwartz decay is via the weighted $L^{2}-$estimates \cite{West} adapted to our ALG manifolds here.  To turn it into (pointwise) weighted Schauder estimate, we need no concentration of $L^{2}-$norm of the Newtonian potential. Namely, because when $R$ is large, an annulus $\{R<r<R+2\}$ on $\mathbb{C}$ has area $O(R)$, that a function $Q_{0}=O(\frac{1}{r^{\beta}})$ implies the $L^{2}-$norm of $Q_{0}$ on the fibred annulus $A(R,R+2)$ is $O(\frac{1}{R^{\beta-\frac{1}{2}}})$. Naive application of the weighted $L^{2}-$estimate \cite{West} on fibered annulus only says that the Newtonian potential $G(Q_{0})$ \eqref{equ decomposition Greens and harmonic} is $O(\frac{1}{r^{\beta-\frac{1}{2}}})$ pointwisely. In other words, this causes ``serious" weight loss  which fails the iteration  for Schwartz decay. To overcome, as opposed to doing Schauder estimate on the fibred annulus, we do it on the fibred sector of angle width comparable to $\frac{1}{R}$ (see Figure \ref{fig annuli}). This is a ``small" part of the fibred annulus that has definite size. The $L^{2}-$norm of the Newtonian potential $G(Q_{0})$ in the fibred sector is bounded by constant comparable to $\frac{1}{R^{\frac{1}{2}}}$ times the $L^{2}-$norm of $Q_{0}$ on the fibred annulus. The no concentration  is because the base of the fibration is $1-$dimensional. This method is expected to fail if the base is of complex dim $2$ or higher.  Eguchi-Hansen metric  on the crepant resolution of $A_{1}-$singularity $\frac{\mathbb{C}^{2}}{\pm Id}$ decays to the flat ALE model in the rate of $O(\frac{1}{r^{4}})$, but not faster. The advantage  of  $\mathbb{S}^{1}$ as the link of the simplest $1-$dimensional cone $\mathbb{C}\setminus O$ is that the eigen functions    $e^{ik\theta}$ has constant norm $1$, therefore has no concentration. This is applied in line 2--3 of \eqref{equ 1 Green func est} below to obtain the factor of $\frac{2}{R}$ which guarantees no loss of weight.  However, on spheres of  dimension $\geq 2$, a sequence of $L^{2}-$normalized eigen-functions indexed by increasing eigenvalues, called Zonal functions, concentrates at  a point \cite{Sogge}. 

The above issue does not happen in the asymptotic cylindrical setting. The reason is in Figure \ref{fig annuli  C cyl}  omitting fiber direction. Also see the more comprehensive Figure \ref{fig annuli}. 
 \begin{figure}[h]
 
  \begin{center}
 
 \begin{tikzpicture}

 \node at (-2,0) {vs};
   \node at (-1,0.79) {$R_{1}$};
      \node at (0.6,0.79) {$R_{1}+2$};
         \node at (2.5,0.79) {$R_{2}$};
            \node at (4.25,0.79) {$R_{2}+2$};
\draw[dashed,color=gray] (-1,-0.5) arc (-90:90:0.2 and 0.5);% right half of the left ellipse
	\draw[semithick] (-1,-0.5) -- (0.75,-0.5);% bottom line
	
	\draw[semithick] (-1,0.5) -- (0.75,0.5);% top line

	\draw[semithick] (-1,-0.5) arc (270:90:0.2 and 0.5);% left half of the left ellipse
	\draw[dashed,color=gray] (0.75,-0.5) arc (-90:90:0.2 and 0.5);
		\draw[semithick] (0.75,-0.5) arc (270:90:0.2 and 0.5);

\draw[dashed,color=gray] (2.5,-0.5) arc (-90:90:0.2 and 0.5);% right half of the left ellipse
	\draw[semithick] (2.5,-0.5) -- (4.25,-0.5);% bottom line
	
	\draw[semithick] (2.5,0.5) -- (4.25,0.5);% top line

	\draw[semithick] (2.5,-0.5) arc (270:90:0.2 and 0.5);% left half of the left ellipse
	\draw[dashed,color=gray] (4.25,-0.5) arc (-90:90:0.2 and 0.5);
		\draw[semithick] (4.25,-0.5) arc (270:90:0.2 and 0.5);
	
	% graph downstairs
		 \draw (-4,0) circle (1);
     \draw (-4,0) circle (0.7);
    
  % dividing line
  	 \draw (-7,0) circle (0.6);
     \draw (-7,0) circle (0.3);
  
\end{tikzpicture} 

\end{center} \caption{Two annulus of the same radius. In the complex plane $\mathbb{C}$, the shape/area depends on their inner radius. But in the standard cylinder $\mathbb{R}\times \mathbb{S}^{1}$, they look the same.}\label{fig annuli  C cyl}
\end{figure}
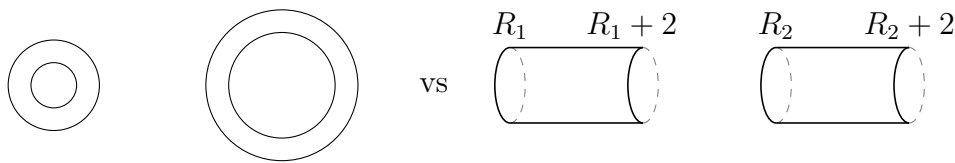

       Organization: Most definitions and background can be found in Section \ref{sect Setup} and the introduction.  In Section \ref{sect Schwartz}, assuming an ansatz that equals a Ricci flat ALG model outside a compact set (of which the construction will be deferred to Section  \ref{sect ansatz}), we utilize the weighted $L^{2}-$estimates in \cite{West} to prove Schwartz decay. In Section \ref{sect ddbar}, on a K\"ahler $Y-$fibration over the punched $1-$dimensional disk such that $b^{1}(Y)=0$, we show that a closed $(1,1)-$form is $i\partial\overline{\partial}-$cohomologous to a fixed $(1,1)-$form on $Y$ (independent of the fiber). With this ``local K\"unneth for $H^{1,1}$",   in Section \ref{sect ansatz}, we construct  ansatz that equals the (Ricci flat) ALG model on the end and satisfies the constraint in Tian-Yau solvability \cite[Theorem 1.1]{TY1} and Hein's version \cite[Proposition 4.1]{HeinThesis}. The desired Ricci flat ALG metric is therefore constructed, and we review Hein's polynomial decay estimate. In Section \ref{sect orbifold metric}, as fairly explicit examples of initial K\"ahler metrics required by Theorem  \ref{Thm},  we describe the  crepant resolution of the orbifold and  the  Bergman metric as a consequence of Kodaira-Baily embedding \cite{Baily}.  In Section \ref{sect Topology}, we calculate the fundamental group and Betti numbers of the examples, and count the parameters of the family.   \\
       
       \textbf{Acknowledgement}:  The author is grateful to Simon Donaldson, Song Sun, Yunfeng Jiang, Yongqiang Liu,  Craig van Coevering,   Simon Brandhorst,   Ruobing Zhang,  and Chi Li  for valuable discussions.

\section{Setup \label{sect Setup}}
\subsection{For the main theorem}
Manifolds are assumed  connected throughout unless otherwise indicated. For example, the exceptional divisor of the resolution and fixed locus of the automorphism $\sigma$ could be reducible. 
\begin{Def}\label{Def WHF}
\begin{enumerate}
\item  An admissible  K\"ahler configuration means a closed K\"ahler manifold $Y$ of complex dimension $d-1$, with an automorphism $\sigma$  of finite order $n$.

 \item  A non-compact complete K\"ahler $d-$fold $(X^{d}, \omega)$    
is said to be weak $\beta-$ALG if there is an admissible  K\"ahler configuration $(Y,\omega_{Y},\sigma)$,  compact sets $K\subset X$, $K^{\prime} \subset  \frac{\mathbb{C}\times Y}{ \langle \sigma \rangle}$,  and  a diffeomorphism 
\begin{equation}\label{equ coordinate near infty}\Phi:\ \frac{\mathbb{C}\times  Y}{ \langle \sigma \rangle} \setminus K^{\prime}\rightarrow X\setminus K\end{equation} such that $\Phi^{-1}$ composed with (restriction of) the  projection $$\rho_{1}:\ \ \frac{\mathbb{C}\times  Y}{ \langle \sigma \rangle}\longrightarrow \frac{\mathbb{C}}{\langle \sigma \rangle}$$
 is holomorphic from $X\setminus K\longrightarrow Range(\rho_{1}\circ \Phi^{-1})\subset \frac{\mathbb{C}}{\langle \sigma \rangle}$, and 
\begin{equation}\label{equ model metric quasi isometry general beta}\frac{\frac{i}{2}\lambda^{2}|z|^{2\beta-2}dzd\overline{z}+\omega_{Y}}{C_{X,g}}\leq \Phi^{\star}(\omega)\leq C_{X,g}(\frac{i}{2}\lambda^{2}|z|^{2\beta-2}dzd\overline{z}+\omega_{Y}),
\end{equation}

for some positive number $C_{X,g}$.  $\omega_{Y}$ is a $\sigma-$invariant K\"ahler form on $Y$, and $\lambda^{2}$ is  a positive real number called the radial scaling factor. The action of $\sigma$ on $\mathbb{C}$ is multiplication by a primitive $n-$th unit root. When $\beta=1$, we call it weak ALG. A weak ALG manifold is said to be $ALG_{\delta}(\frac{2\pi}{n})$ for some $\delta>0$, or simply ALG (sometimes   $\delta$ and/or $\frac{2\pi}{n}$ are suppressed) if for any natural number $k$, 
\begin{equation}\label{equ decay}|\nabla^{k}[\Phi^{\star}(\omega)-(\frac{i}{2}\lambda^{2} dzd\overline{z}+\omega_{Y})]|\leq C_{k,\delta}r^{-\delta},
\end{equation}
for some $C_{k,\delta}$ independent of $r\triangleq |z|$.  We call 
\begin{equation}\label{equ model orb product metric}\omega_{model}\triangleq \frac{i}{2}\lambda^{2} dzd\overline{z}+\omega_{Y}\ \ \textrm{the ALG model},\end{equation}  under which the norms/covariant derivatives in \eqref{equ decay} are defined. We say such a manifold has Schwartz decay if it is $ALG_{\delta}(\frac{2\pi}{n})$ for any $\delta>0$, in which case we  call it $ALG_{\infty}(\frac{2\pi}{n})$ as well.  The compact sets $K$ and $K^{\prime}$ are not necessarily required to be uniform in $\delta$. 
 
Unlike \cite{HeinThesis, CC}, we do not require faster decay for higher derivatives  formally in \eqref{equ decay}. But there is no essential difference because we prove Schwartz decay anyway. 
 \item A weak ALG manifold (without specifying metric) is called an  isotrivial ALG manifold if the $\Phi$ in \eqref{equ coordinate near infty} is a biholomorphism. An  ALG K\"ahler metric on an iso-trivial  ALG manifold  is called iso-trivial if it (strictly) equals a model \eqref{equ model orb product metric} away from a compact set in $X$ under the holomorphic coordinate $\Phi$ near $\infty$.  
 
 The following implication on metrics holds.
$$Isotrivial\  ALG \Longrightarrow ALG_{\infty}\Longrightarrow ALG\Longrightarrow \textrm{weak}\ ALG.$$
 
% \item  A weak ALG metric is said to satisfy  weak volume decay if   there is a sequence of points $p_{i}$ such that 
%\begin{equation}\label{Weak vol decay}r(p_{i})\rightarrow \infty,\ \textrm{and}\ \lim_{i\rightarrow \infty}\log \frac{\omega^{d}}{\omega_{model}^{d}}=0. \end{equation}

\item Continuing Definition \ref{Def Y sigma before thm} and Theorem \ref{Thm}, we usually  denote the K\"ahler crepant resolution $\widetilde{\frac{\mathbb{C} \times Y}{\langle  \sigma \rangle}}
$ by $X$. 

By crepant resolution, we include the property that the pullback $\Phi^{-1,\star}(dz\wedge \Omega_{Y})$ of the invariant holomorphic volume form extends to a now-where vanishing holomorphic $(d,0)-$form on $X$. This automatically holds when complex dimension is $3$ under the setting of Corollary \ref{Cor >2},  since the discrepancy is $0$ (see \cite[Lemma 6, extension in the proof]{Roan} and \cite[Example 1.9, the application of Theorem D]{CHNP}). 

The compactification $\overline{X}$ is 
$$\widetilde{\frac{\mathbb{P}^{1} \times Y}{\langle  \sigma \rangle}},$$
where $\sigma$ still acts by $\frac{1}{\zeta_{n}}$  on 
$\mathbb{P}^{1}=\mathbb{C}\cup \{\infty\}$,  and the resolution is only along $\{0\} \times Fix(\sigma)$, but not along $\{\infty\}\times Fix(\sigma)$.  $u=\frac{1}{z}$ defines the   coordinate $\Phi$ near $\infty$, and the compactifying divisor is $\frac{\{0_{u}\}\times Y}{\langle\sigma \rangle}$ (also denoted by $\frac{\{\infty\}\times Y}{\langle\sigma \rangle}$ in terms of $z$). 
\end{enumerate}

\end{Def}

For any $\infty\geq  b>a>0$, let $A(a,b)$ denote the (open) annulus $$(a,b)\times \mathbb{S}^{1}\times Y\subseteq \mathbb{C}^{\star}\times Y$$   When $b=\infty$, we also call it ``the end".  Let $\psi_{\mu}$ denote the biholomorphism 
  from $A(r_{0},\infty)$ to $A(\mu r_{0},\infty)$ defined by the radial scaling
  $$\psi_{\mu}(z,y)=(\mu z,y).$$
  When $r_{0}$ is large enough, the model metric  \eqref{equ model metric quasi isometry general beta} becomes the normalized
  \begin{equation}\label{equ normalized model}\frac{i}{2}|z|^{2\beta-2}dzd\overline{z}+\omega_{Y}\ \textrm{under the coordinate}\ \psi_{\lambda^{-\frac{1}{\beta}}}\circ \Phi.
  \end{equation}
  The proof of Theorem \ref{Thm} (including the lemmas, propositions etc it requires) are  under the normalizing chart $\psi_{\lambda^{-\frac{1}{\beta}}}\circ \Phi$ unless otherwise specified.  The $\lambda$  adds one more parameter to the family of examples.

\subsection{For Monge-Ampere equation}
The  identity
\begin{equation}
\omega_{\mathbb{C}^{d}}^{d}=\nu_{d}\Omega_{\mathbb{C}^{d}}\wedge \overline{\Omega}_{\mathbb{C}^{d}},\ \nu_{d}\triangleq \left\{\begin{array}{c}
\frac{d!}{2^{d}}\ \textrm{when}\ d\ \textrm{is even},\\
\frac{d!}{2^{d}}\cdot i\ \textrm{when}\ d\ \textrm{is odd}
\end{array}\right.
\end{equation}
holds for the standard K\"ahler metric and   holomorphic volume forms on the Euclidean space:
$$\omega_{\mathbb{C}^{d}}=\frac{i}{2}(dz_{1}d\overline{z}_{1}+...+dz_{d}d\overline{z}_{d}),\ \ \Omega_{\mathbb{C}^{d}}=dz_{1}dz_{2}\wedge...\wedge dz_{d}.$$

On an iso-trivial Calabi-Yau fibration $\widetilde{\frac{\mathbb{C} \times Y}{\langle  \sigma \rangle}}$ as Definition \ref{Def WHF},  define the Ricci potential/volume density
\begin{equation}\label{equ volume density}f\triangleq \log(\frac{\nu_{d}\Omega\wedge \bar{\Omega}}{\omega^{d}_{0}}).\end{equation} %Suppose $\phi$ is a twice differentiable function such that  $\omega_{\phi}\triangleq \omega_{0}+i\partial \overline{\partial}\phi$ is K\"ahler.  

 It suffices to solve the target (elliptic) Monge-Ampere equation 
\begin{equation}\label{equ Target MA}
(\omega_{0}+i\partial \overline{\partial}\phi)^{d}=e^{f}\omega^{d}_{0},
\end{equation}
which is equivalent to the volume form equation:
\begin{equation}
\omega_{\phi}^{d}=\nu_{d}\Omega\wedge \bar{\Omega},\ \textrm{where}\ \omega_{\phi}\triangleq \omega_{0}+i\partial \overline{\partial}\phi.
\end{equation}

Tian-Yau existence \cite[Theorem 1.1]{TY1} and Hein's version \cite[Proposition 4.1]{HeinThesis} require an ansatz  that obeys the constraint
\begin{equation}\label{equ constraint}
\int_{X}(\omega_{0}^{d}-\nu_{d}\Omega\wedge\overline{\Omega})=0. 
\end{equation}
As a premise we need the top form $\omega_{0}^{d}-\nu_{d}\Omega\wedge\overline{\Omega}$ to be integrable over $X$. \\
\subsection{Other terms and conventions}

\textit{Terms}:\begin{itemize}\item  Let $orb$ denote $\frac{\mathbb{C}\times  Y}{ \langle \sigma \rangle}$ or the special case $\frac{\mathbb{C}\times  K3}{ \langle \sigma \rangle}$ of interest, and call it ``the orbifold".

\item Sometimes or always, by ``K\"ahler metric", we mean the K\"ahler form.

  \item Let $R_{0}>>1$ be large enough so all the subsequent desired operations are valid. We extend the model radius $r$, from $A(R_{0},\infty)$ to be valued in $[0,1]$ on the whole $X$ and $\equiv 0$  in $X\setminus A(R_{0}-1000,\infty)$.  
  
  \item Let $\Delta^{\star}$ or $\Delta^{\star}_{u}$ be the punched disk  $\{u\in \mathbb{C}|\ 0<|u|<\rho_{0}\}$, $u\triangleq \frac{1}{z}$ is the base coordinate near $\infty$.  The radius $\rho_{0}$ is suppressed and is usually taken large enough depending on context.  See for example section \ref{sect ddbar}. 
\end{itemize}

\textit{Convention for constants}:  Constants $C$ (might be different at different places) and radius $R_{i}$ are  positive  and depend on the underlying manifold/orbifold which is fixed and  clear from context. They might depend on  existing/preceding reference metrics, but never on the reference metric we are constructing,  or the Ricci flat metric/K\"ahler potential  we solve for in the current step. Sometimes we add subscript to indicate dependence, and/or use other symbols. 

\section{Schwartz decay assuming an isotrivial ansatz\label{sect Schwartz}}

\subsection{Preliminary and idea}
On the end of the resolution $X=\widetilde{\frac{\mathbb{C}\times Y}{\langle \sigma \rangle}}$, we study the metrics and functions  in the orbifold chart upstairs i.e. we view them as being $\sigma-$invariant on the fibred annulus $A(\rho,\infty)=[\mathbb{C}\setminus D(\rho)]\times Y$ where $D(\rho)\subset \mathbb{C}$ is a disk of radius $\rho$ centered at the origin.

\textit{Iso-triviality of the ansatz $\omega_{0}$ constructed at the end of Section \ref{sect ddbar} implies the Ricci potential/volume density $f$ \eqref{equ volume density}  is compactly supported}. Away from the support of $f$,   the target equation \eqref{equ Target MA} reads
\begin{equation}\label{equ MA on end same volume form}
\omega_{\phi}^{d}=\omega_{model}^{d},\ \textrm{where}\ \omega_{\phi}\triangleq \omega_{model}+i\partial \overline{\partial}\phi. 
\end{equation}
The equality \eqref{equ Kovalev form of MA} below becomes 
 \begin{equation}\label{equ Kovalev form of MA}
 \frac{i\partial \overline{\partial}\phi\wedge[\omega_{\phi}^{d-1}+...+\omega_{model}^{j}\wedge \omega_{\phi}^{d-1-j}+...+\omega_{model}^{d-1}]}{\omega_{model}^{d}}=0.
 \end{equation}
Foiling the left side yields %and observe that the lowest degree term in $i\partial \overline{\partial}\phi$ is $\omega_{model}^{n-1}$ up to a positive real constant. Then dividing by another constant if necessary,  
% the target equation  on the ALG end becomes
\begin{equation}\label{equ Laplace}
\Delta_{model}\phi=Q(i\partial \overline{\partial}\phi),
\end{equation}
where  $Q$ is quadratic  in $i\partial \overline{\partial}\phi$, and $\Delta_{model}$ is the Laplace Beltrami of the ALG model. Then we bootstrap the decay rate using the main theorem in  \cite{West}   $$i\partial \overline{\partial}\phi= O(\frac{1}{r^{\beta}})\Longrightarrow Q(i\partial \overline{\partial}\phi)=O(\frac{1}{r^{2\beta}})\Longrightarrow i\partial \overline{\partial}\phi= O(\frac{1}{r^{2\beta}}).$$

Successive application of the above yields that $|i\partial \overline{\partial}\phi|$ has faster than polynomial decay i.e. $|i\partial \overline{\partial}\phi|=o(\frac{1}{r^{p}})$ for any $p>0$.

\subsection{Proof \label{sect proof}}
We say a continuous function $f$ on a cone end $A(\varrho,\infty)$ has vanishing fiber-wise integrals if for any $z$ with $|z|>\varrho$ (where $\varrho>0$ is fixed), the integral of $f$ over $z\times Y$ vanishes. We decompose  equation \eqref{equ Laplace} according to $Ker\Delta_{Y}$ and the orthogonal complement:
 \begin{equation}\label{equ decomposition}
 \phi=\phi_{0}+\underline{\phi},\ Q=Q_{0}+\underline{Q}
 \end{equation}
 where $\underline{\phi},\ \underline{Q}$ are pullback functions from $\mathbb{C}$,  and $\phi_{0},\ Q_{0}$ have vanishing fiber-wise integrals.  This means 
 \begin{equation}
 \underline{\phi}=\frac{\int_{Y}\phi \frac{\omega^{d-1}_{Y}}{(d-1)!}}{\int_{Y}1 \frac{\omega^{d-1}_{Y}}{(d-1)!}}. \ \ \textrm{Moreover}, \ i\partial \overline{\partial}\underline{\phi}=i\partial_{\mathbb{C}} \overline{\partial}_{\mathbb{C}}\underline{\phi}=\frac{\int_{Y}(i\partial_{\mathbb{C}} \overline{\partial}_{\mathbb{C}}\phi) \frac{\omega^{d-1}_{Y}}{(d-1)!}}{\int_{Y}1 \frac{\omega^{d-1}_{Y}}{(d-1)!}}.\ 
 \end{equation}
Therefore decay (estimates) of $i\partial \overline{\partial}\phi$ is equivalent to  that of both  $i\partial \overline{\partial}\underline{\phi}$ and $i\partial \overline{\partial}\phi_{0}$ as 
\begin{equation}\label{equ ddbar of phi est equiv to both parts}
i\partial \overline{\partial}\phi_{0}=i\partial \overline{\partial}\phi-\frac{\int_{Y}(i\partial_{\mathbb{C}} \overline{\partial}_{\mathbb{C}}\phi) \frac{\omega^{d-1}_{Y}}{(d-1)!}}{\int_{Y}1 \frac{\omega^{d-1}_{Y}}{(d-1)!}}.
\end{equation}
 We do not decompose the $i\partial \overline{\partial} \phi$ in $Q$,  but directly decompose $Q$ as a whole. Because $\Delta_{Y}\underline{\phi}=0$,  \eqref{equ Laplace}  is decomposed into the following two equations 
 \begin{equation}\label{equ decomposed}
 \Delta \phi_{0}=Q_{0},\ \Delta_{\mathbb{C}} \underline{\phi}=\underline{Q}.
 \end{equation}
 The point is that $\Delta_{\mathbb{C}}=4\frac{\partial^{2}}{\partial z \partial \overline{z}}$. Hence the second equation in \eqref{equ decomposed} says 
  \begin{equation}\label{equ identify ddc with laplace}
i\partial \overline{\partial}\underline{\phi} =\frac{\underline{Q}}{4}i dz d \overline{z}\ \ \ \textrm{automatically decays because}\ \underline{Q}\ \textrm{does}. 
 \end{equation}
The main theorem in \cite{West} gives a Green's function for the first equation in \eqref{equ decomposed} with certain bounds. This yields the following weighted  Schauder (pointwise) estimate. 

\begin{lem}\label{lem weighted Schauder}On the end,  for any real number $\beta$, natural number $k\geq 1000(1000^{d}+1)$,   non-negative real number $\tau_{0}$ as Proposition \ref{prop Schwartz decay for grav instantons},   and  $R >1000$,  there exists a constant $C$ such that the following  estimate holds whenever $\phi_{0}$  have vanishing fiber-wise integral, and is   $C^{k+2,\frac{1}{2}}$ in  $A(R-1,\infty)$. 
\begin{equation}\label{equ lem 4.1}
|i\partial \overline{\partial}\phi_{0}|_{C_{\beta}^{k,\frac{1}{2}}[A(R-1,\infty)]}\leq C\{|e^{-\tau_{0}r}\phi_{0}|_{C^{0}[A(R-2,\infty)]}+|\Delta \phi_{0}|_{C_{\beta}^{k,\frac{1}{2}}[A(R-2,\infty)]}\}.
\end{equation}
\end{lem}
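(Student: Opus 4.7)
The plan is to reduce the weighted $C^{k,\frac{1}{2}}_\beta$ estimate on $i\partial\overline{\partial}\phi_{0}$ to (i) a pointwise bound $|\phi_{0}(p)|\leq C|p|^{-\beta}(\textrm{RHS of }\eqref{equ lem 4.1})$ for $|p|\geq R-2$, and (ii) standard unweighted interior Schauder estimates on unit balls of the ALG model, which has bounded geometry at this scale. Once (i) is in hand, apply interior Schauder to $\phi_{0}$ on $B_{1}(p)$, multiply through by $|p|^{\beta}$, and sum over a locally finite cover of $A(R-1,\infty)$ by such unit balls to obtain the weighted $C^{k,\frac{1}{2}}_\beta$ estimate on $i\partial\overline{\partial}\phi_{0}$. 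Since $\phi_{0}$ and $\Delta\phi_{0}$ are the only quantities on the right, the content of the lemma is really to turn the $e^{\tau_{0}r}$-weighted $C^{0}$ bound on $\phi_{0}$ into a genuine polynomial decay bound.

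To establish (i), I would split $\phi_{0}=G(\Delta\phi_{0})+H$ using the Green's operator of $\Delta_{model}$ supplied by West's weighted $L^{2}$ theory \cite{West}, where $H$ is the harmonic remainder on the end. Both summands inherit the vanishing fiber-wise integral from $\phi_{0}$. For the harmonic piece I separate variables along $Y$: each fiber-wise Fourier mode of $H$ is an eigenfunction of $-\Delta_{Y}$ with eigenvalue $k_{j}^{2}\geq k_{0}^{2}$, so its radial component $h_{j}$ solves $(\Delta_{\mathbb{C}}-k_{j}^{2})h_{j}=0$; by modified Bessel asymptotics the only solutions compatible with the subexponential hypothesis $\phi_{0}=O(e^{\tau_{0}r})$ with $\tau_{0}<k_{0}\leq k_{j}$ are exponentially decaying, giving
\begin{equation*}
|H(p)|\leq C e^{-(k_{0}-\tau_{0})|p|}\cdot |e^{-\tau_{0}r}\phi_{0}|_{C^{0}[A(R-2,\infty)]},
\end{equation*}
which beats every polynomial weight $|p|^{-\beta}$.

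For the Newtonian part $G(\Delta\phi_{0})$ I would implement the ``fibered sector'' trick from the introduction. Cover the fibered annulus $A(|p|-1,|p|+1)$ by fibered sectors $S_{|p|}$ of angular width $\sim 1/|p|$; under the normalized model \eqref{equ normalized model} each such sector has uniformly bounded volume. The assumption $|\Delta\phi_{0}|\leq C r^{-\beta}$ then gives $\|\Delta\phi_{0}\|_{L^{2}(S_{|p|})}\leq C|p|^{-\beta}$, as opposed to the naive $C|p|^{-\beta+\frac{1}{2}}$ that a full fibered annulus would yield. Coupling this to West's Green's kernel estimate localized to the sector, and using that the $\mathbb{S}^{1}$-eigenbasis $\{e^{ik\theta}\}$ of the link has uniformly bounded pointwise norms so that no concentration occurs, produces
\begin{equation*}
|G(\Delta\phi_{0})(p)|\leq C|p|^{-\beta}\cdot |\Delta\phi_{0}|_{C^{k,\frac{1}{2}}_{\beta}[A(R-2,\infty)]}
\end{equation*}
with no loss of weight.

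The main obstacle is exactly this sector-wise Green's kernel estimate: one has to check quantitatively that localizing West's weighted $L^{2}$ theory from fibered annuli to angular sectors of width $1/|p|$ captures the gained $|p|^{-1/2}$ on the source side without paying a compensating loss through the kernel, which is where the one-dimensionality of the base $\mathbb{C}$ and the boundedness of the circular eigenbasis are essential. Once this and the harmonic decay bound are combined, the pointwise bound (i) follows, interior Schauder on unit balls produces the $C^{k,\frac{1}{2}}$ bound on $i\partial\overline{\partial}\phi_{0}$ at scale $1$, and re-weighting by $|p|^{\beta}$ concludes the proof of the lemma.
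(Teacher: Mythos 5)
Your overall architecture coincides with the paper's: the same splitting $\phi_{0}=G(\Delta\phi_{0})+h$ via West's Green's operator \cite{West}, the same spectral-gap mechanism killing the harmonic remainder (the paper runs it through the intermediate weight $\tau=\frac{\tau_{0}+k_{0}}{2}$ and the exponential-decay statement of \cite{West} rather than modified Bessel asymptotics, but it is the same fact), the same fibered-sector/no-concentration device for the Newtonian part, and the same interior Schauder upgrade on small balls followed by reweighting. The one place where your plan deviates is also the one place where it would stall: you propose to localize to the sector on the \emph{source} side, estimating $\|\Delta\phi_{0}\|_{L^{2}(S_{|p|})}\le C|p|^{-\beta}$ and then invoking a ``Green's kernel estimate localized to the sector.'' But $G$ is nonlocal and is diagonal in the Fourier basis $e^{ik\theta}\psi_{\mu}$, which is incompatible with an angular truncation of the source, so a sector bound on the input does not control the output on that sector; moreover the Mazja--Plamenevskii estimate of \cite{West} takes the input in $C^{0}$ on the full fibered annulus in any case. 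The paper resolves this by localizing on the \emph{output} side instead: since $|e^{ik\theta}|\equiv 1$, the $L^{2}$ norm of $G(Q_{0})$ over the sector of angular width $\frac{2}{R}$ equals exactly $\frac{1}{\pi R}$ times its $L^{2}$ norm over the full annulus, computed termwise in the Fourier expansion \emph{before} any kernel estimate is used, and only then is the Mazja--Plamenevskii bound applied on the full annulus with $|Q_{0}|_{C^{0}}$ on the right. This rearrangement is precisely the quantitative check you flag as the ``main obstacle''; with it in place your argument becomes the paper's.
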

By smoothness of Tian-Yau solution  \cite[Theorem 1.1, last sentence]{TY1}, we only need a priori estimate. Combining the trivial identification \eqref{equ identify ddc with laplace}, we find for any large $R_{1}$ that
\begin{equation}
|i\partial \overline{\partial}\phi|_{C_{\beta}^{k,\frac{1}{2}}[A(R_{1}+1,\infty)]}\leq C_{\beta}\{|e^{-\tau_{0}r}\phi|_{C^{0}[A(R_{1},\infty)]}+|Q|_{C_{\beta}^{k,\frac{1}{2}}[A(R_{1},\infty)]}\}.
\end{equation}

Starting from the initial decay Lemma \ref{equ initial decay},  induction yields 
\begin{eqnarray*}& & |i\partial \overline{\partial}\phi|_{C_{2^{j}\beta_{0}}^{k,\frac{1}{2}}[A(R_{1}+j,\infty)]}
\\& \leq & C_{2^{j}\beta_{0}}\cdot  C_{2^{j-1}\beta_{0}}\cdot \cdot \cdot \cdot C_{2\beta_{0}}(|e^{-\tau_{0}r} \phi|_{C^{0}[A(R_{1},\infty)]}+|i\partial \overline{\partial}\phi|_{C_{\beta_{0}}^{k,\frac{1}{2}}[A(R_{1},\infty)]})
\end{eqnarray*}
for any non-negative integer  $j$. The $k$ is fixed, large, and arbitrary during the induction.  Still under the setting of Proposition \ref{prop Schwartz decay for grav instantons},  standard argument\footnote{Section \ref{section weighted Schauder norm}} incorporating the no-concentration estimate \eqref{equ 1 Green func est} shows

\begin{equation}\label{equ higher order initial decay}|i\partial \overline{\partial}\phi|_{C_{\beta_{0}}^{k,\frac{1}{2}}[A(R_{1},\infty)]}\leq C[|e^{-\tau_{0}r} \phi|_{C^{0}[A(R_{1}-10,\infty)]}+|i\partial \overline{\partial}\phi|_{C_{\beta_{0}}^{0}[A(R_{1}-10,\infty)]}].
\end{equation}

This means the $C^{0}-$ decay conditions in Proposition \ref{prop Schwartz decay for grav instantons} imply the higher order   decay in rate $\beta_{0}=\delta_{0}>0$ to initiate the iteration.  The proof for  Schwartz decay is complete. 

%\begin{rmk}Though the Schwartz decay is not affected even if we allow 
%the constant in \eqref{equ lem 4.1} to depend on $R$, it can be taken independent of %$R$.
%\end{rmk}
\subsection{Weighted Schauder estimate and Mazja-Plamenevskii technique}

\begin{proof}[Proof of Lemma \ref{lem weighted Schauder}:] Step 1: Preliminary. 

Let $Q_{0}$ denote $\Delta\phi_{0}$, and $G$ denote the Green's function constructed in \cite{West}. Here we extend $Q_{0}$ to be $0$ outside the end $A(R,\infty)$.  This does not affect $L^{2}-$integrability. Then we  apply the $G$ in \cite{West} and restrict $G\circ Q_{0}$ on the end.  We find  the decomposition
\begin{equation}\label{equ decomposition Greens and harmonic}\phi_{0}=G\circ Q_{0}+h\  \textrm{where}\ \ h\ \ \textrm{is harmonic}.
\end{equation}  We estimate them separately.

Let $p=(R,\theta_{0},x)\in (0,\infty)\times \mathbb{S}^{1}\times Y$ be an arbitrary point such that $R>1000$. The regularity we assumed ensures that  the summation of Fourier-Series equals the function itself i.e. $$Q_{0}=\Sigma_{k\in \mathbb{Z},\mu\in Spec_{mul} \Delta_{Y}}(Q_{0})_{k,\mu}e^{ik\theta}\psi_{\mu},$$
where $Spec_{mul} \Delta_{Y}$ is the spectrum (counted with multiplicity)  of the Laplace Beltrami on $Y$ with respect to the invariant K\"ahler metric $\omega_{Y}$, and $\psi_{\mu}$ is the corresponding eigen-function.  The Green's function constructed in \cite[Theorem 1.2]{West} respects the Fourier decomposition i.e. for any $k,\mu$ as indicated, there exists a Green's function $G_{k,\mu}$ on functions in $r$ only,  such that  $$G(Q_{0})=\Sigma_{k\in \mathbb{Z},\mu\in Spec_{mul}}G_{k,\mu}[(Q_{0})_{k,\mu}]e^{ik\theta}\psi_{\mu}.$$

Step 2: Estimate  for  the Newtonian potential $G(Q_{0})$. 

The estimates in \cite{West} are all about $L^{2}$/Sobolev norms. Here we translate them to pointwise estimates. 

We apply the usual interior Schauder estimate in a small scale with respect to the convex, harmonic radius of $Y$, and Tian-Yau ``radius".  The idea is to integrate the Mazja-Plamenevsky estimate \cite[Theorem 1.2.(4)]{West} on 
a sector  over  the ``small" portion $(\theta_{0}-\frac{1}{R},\theta_{0}+\frac{1}{R})$ of the circle (Figure \ref{fig annuli}). 
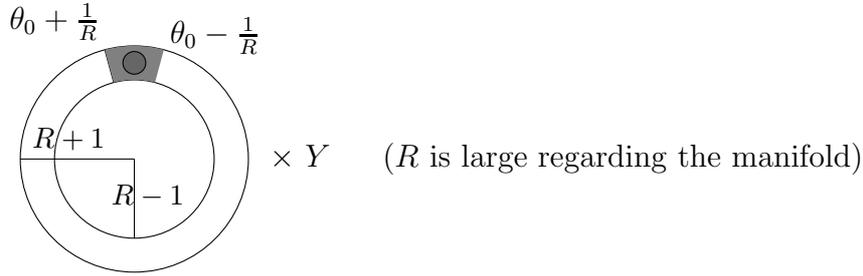
\begin{figure}[h]
\begin{center}
\begin{tikzpicture}[scale=1.5]
  \draw (0,0) circle (1);
     \draw (0,0) circle (0.7);
     \draw (105:1)--(105:0.7);
       \node  at (120:1.4) {$\theta_{0}+\frac{1}{R}$};
        \node  at (57:1.3) {$\theta_{0}-\frac{1}{R}$};
       \coordinate [label=right :$\ \times\ Y\ \ \ \ \ (R$ is large regarding the manifold)] (E) at (0:1);
    \path[fill=gray] (105:1)--(105:0.7) arc (105: 75: 0.7)--(75: 0.7) -- (75:1) arc (75: 105: 1)--cycle;
 \draw (0,0)--(270:0.7);
 \node at (290:0.35) {\small{$R-1$}};
  \draw (0,0)--(180:1);
 \node at (163:0.6) {\small{$R+1$}};
    \draw[fill=gray!120] (90:0.85) circle (0.1);

 \end{tikzpicture} \end{center}\caption{gray region$\times Y$ is a sector-shaped annulus}\label{fig annuli}
 \end{figure}

The weighted $L^{2}-$norm on the sector-shaped annulus is estimated as follows, where the  measure  is the standard $rdrd\theta dY$.
\begin{eqnarray}& &|r^{\beta}G(Q_{0})|^{2}_{L^{2}[(R-1,R+1)\times (\theta_{0}-\frac{1}{R},\theta_{0}+\frac{1}{R})\times Y]}\nonumber
\\&=&\Sigma_{k\neq 0,\mu}\int_{Y}\int^{\theta_{0}+\frac{1}{R}}_{\theta_{0}-\frac{1}{R}}\int_{R-1}^{R+1}|G_{k,\mu}[(Q_{0})_{k,\mu}]|^{2} |e^{ik\theta}|^{2}\cdot  |\psi_{\mu}|^{2}r^{2\beta} rdrd\theta dY\nonumber
\\&=&\frac{2}{R}\Sigma_{k\neq 0,\mu}\int_{R-1}^{R+1}|G_{k,\mu}[(Q_{0})_{k,\mu}]|^{2}r^{2\beta}rdr
=\frac{1}{\pi R} |r^{\beta}G(Q_{0})|^{2}_{L^{2}[A(R-1,R+1)]}\nonumber
\\& \leq & C \sup_{\varrho\geq R}\frac{1}{\varrho} |r^{\beta}G(Q_{0})|^{2}_{L^{2}[A(\varrho-1,\varrho+1)]}
\leq C\sup_{\varrho\geq R} \varrho^{2\beta}|Q_{0}|^{2}_{C^{0}[A(\varrho-1,\varrho+1)]}. \label{equ 1 Green func est}
\end{eqnarray}
The reason  we lose the $\varrho-$factor in line $4$ is that the area of the fibred annulus is comparable to $R$. In line 3,  we must sum up   before  applying  Mazja-Plamenevski estimate  \cite[Theorem 1.2, (4)]{West} to obtain line 4, where the $L^{2}-$norms  are on the full fibred annulus, not only the sector.

The ball $B_{p}(\frac{1}{10})$ (deeper gray ball in Figure \ref{fig annuli}) is contained in the interior of the sector $(a,b)\times (\theta_{0},\theta_{1})\times Y$,  using the $L^{2}$ norm as the lowest order term in the Schauder interior  estimate, we find 
\begin{eqnarray}& &R^{\beta}|G(Q_{0})|_{C^{k+2,\frac{1}{2}}[B_{p}(\frac{1}{10})]}\nonumber
\\&\leq &  CR^{\beta}\{|G(Q_{0})|_{L^{2}[(a,b)\times (\theta_{0},\theta_{1})\times Y]}+|Q_{0}|_{C^{k,\frac{1}{2}}[(a,b)\times (\theta_{0},\theta_{1})\times Y]}\}\nonumber
\\&\leq & C\cdot \sup_{\varrho \geq R} R^{\beta}|Q_{0}|_{C^{k,\frac{1}{2}}[A(\varrho-1,\varrho+1)]}\ \ \ \label{equ 1 Schwarts strip} (\textrm{we applied}\  \eqref{equ 1 Green func est})
\\&\leq & C|Q_{0}|_{C_{\beta}^{k,\frac{1}{2}}[A(\varrho-1,\infty)]}.\nonumber
\end{eqnarray}

Step 3: Estimate for the harmonic part. Using the spectrum gap, let $\tau\triangleq\frac{\tau_{0}+k_{0}}{2}\in (\tau_{0}, k_{0})$ in Proposition \ref{prop Schwartz decay for grav instantons}, we find 
\begin{eqnarray}& & R^{\beta}|h|_{C^{k+2,\frac{1}{2}}[B_{p}(\frac{1}{10})]}\nonumber
\leq C|r^{\beta}h|_{L^{2}[B_{p}(\frac{1}{10})]}  \leq C|r^{\beta}e^{-\tau r}h|_{L^{2}[A(R-1,\infty)]}
\\& \leq & C|e^{-\tau_{0} r}h|_{C^{0}[A(R-1,\infty)]} \nonumber
 \leq  C[|e^{-\tau_{0} r} \phi_{0}|_{C^{0}[A(R-1,\infty)]}+  |e^{-\tau_{0} r} G\circ Q_{0}|_{C^{0}[A(R-1,\infty)]}]\nonumber
\\& \leq & C[|e^{-\tau_{0} r} \phi_{0}|_{C^{0}[A(R-2,\infty)]}+|Q_{0}|_{C_{\beta}^{\frac{1}{2}}[A(R-2,\infty)]}].\label{equ 2 Schwarts strip}
\end{eqnarray}

Step 4: Summing \eqref{equ 1 Schwarts strip} and \eqref{equ 2 Schwarts strip} up for the decomposition \eqref{equ decomposition},  we find 
\begin{eqnarray}& &|i\partial \overline{\partial}\phi_{0}|_{C_{\beta}^{k,\frac{1}{2}}[A(R-1,\infty)]}\leq  |i\partial \overline{\partial} [G (Q_{0})] |_{C_{\beta}^{k,\frac{1}{2}}[A(R-1,\infty)]}+|i\partial \overline{\partial}h |_{C_{\beta}^{k,\frac{1}{2}}[A(R-1,\infty)]}\nonumber
\\& \leq &  C\{|e^{-\tau_{0} r}\phi_{0}|_{C^{0}[A(R-2,\infty)]}+|Q_{0}|_{C_{\beta}^{k,\frac{1}{2}}[A(R-2,\infty)]}\}. \nonumber
\end{eqnarray}
The proof  is complete. \end{proof}
\section{Local $i\partial \overline{\partial}-$lemma  on the ALG end \label{sect ddbar}}

 %Now we are ready for the desired $i\partial \overline{\partial}-$solvability on local fibration, and the local K\"unneth formula for $H^{1,1}$. 
\begin{lem}\label{lem strong iddbar} Let $Y$ be a compact K\"ahler manifold, and $[ \ \cdot \ ]$ denote the cohomology class of a $(1,1)-$form   in $H^{1,1}[Y,\mathbb{C}]$.  Let $\omega$ be a closed $(1,1)-$form on the trivial fibration $\Delta^{\star}\times Y$.  Then the restricted $(1,1)-$cohomology class  $[\omega(u)]$ on $Y$ is independent of $u$. Suppose  $b^{1}(Y)=0$ additionally. Then for any $u_{0}\in \Delta^{\star}$, there is a smooth function $\phi_{0}$ on $\Delta^{\star}\times Y$  such that 
 \begin{equation}\label{equ form of w iddbar}
 \omega=i\partial \overline{\partial}\phi_{0}+\omega_{Y,0},
 \end{equation}
 where $\omega_{Y,0}\triangleq \omega_{Y}(u_{0})$ is the restriction of $\omega$ onto the fiber $u_{0}\times Y$, viewed as a $\Delta^{\star}-$independent form.  

  If $\sigma$ is a non-id  automorphism of $Y$ of  finite order $ord_{\sigma}$, let $\sigma$ act on $\Delta^{\star}$ via multiplication by a primitive $ord_{\sigma}-$th unit root.     If $\omega$ is real and/or  $\sigma-$invariant,  then  $\phi_{0}$ can be taken real and/or $\sigma-$invariant as well.
  
  %$\phi$ unique up to a harmonic function on $\Delta^{\star}$ (independent of $Y$).

   \end{lem}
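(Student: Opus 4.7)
The plan is to prove the first assertion by a short homotopy argument, and to handle the main $i\partial\overline{\partial}$-identity via a two-step reduction: first obtain $d$-exactness of $\omega-\omega_{Y,0}$ from the K\"unneth formula for de Rham cohomology, then upgrade to $i\partial\overline{\partial}$-exactness by the Dolbeault K\"unneth formula. Reality and $\sigma$-invariance are then recovered by averaging.

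For the fiberwise cohomology class claim, any two inclusions $i_{u_0},i_{u_1}:Y\hookrightarrow\Delta^{\star}\times Y$ are smoothly homotopic through any path joining $u_0$ to $u_1$ in the connected set $\Delta^{\star}$, hence $i_{u_0}^{\star}\omega$ and $i_{u_1}^{\star}\omega$ are de Rham cohomologous on $Y$. Being pure of type $(1,1)$, they represent the same class in $H^{1,1}(Y,\mathbb{C})$.

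For the main identity, set $\eta\triangleq\omega-\pi_Y^{\star}\omega_{Y,0}$, which is closed of type $(1,1)$ with $i_{u_0}^{\star}\eta=0$. Step (a): by the topological K\"unneth formula, using $H^2(\Delta^{\star},\mathbb{C})=0$ and $H^1(Y,\mathbb{C})=0$, restriction to any fiber realizes an isomorphism $H^2(\Delta^{\star}\times Y,\mathbb{C})\cong H^2(Y,\mathbb{C})$. Hence $[\eta]=0$ in de Rham cohomology, so $\eta=d\xi$ for some smooth complex $1$-form $\xi=\xi^{1,0}+\xi^{0,1}$. Matching bidegrees in $d\xi=\eta$ gives $\partial\xi^{1,0}=0$, $\overline{\partial}\xi^{0,1}=0$, and $\eta=\overline{\partial}\xi^{1,0}+\partial\xi^{0,1}$. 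Step (b): since $\Delta^{\star}$ is Stein, the Dolbeault K\"unneth formula yields
\begin{equation*}
H^{0,1}(\Delta^{\star}\times Y)\cong H^0(\Delta^{\star},\mathcal{O})\otimes H^{0,1}(Y)\ \oplus\ H^1(\Delta^{\star},\mathcal{O})\otimes H^0(Y,\mathcal{O})=0,
\end{equation*}
because $H^1(\Delta^{\star},\mathcal{O})=0$ by Cartan's Theorem B, and $H^{0,1}(Y)=0$ follows from $b^1(Y)=0$ together with Hodge decomposition on the compact K\"ahler manifold $Y$. Thus $\xi^{0,1}=\overline{\partial}h$ for some smooth $h$, and by complex conjugation the corresponding vanishing for $\partial$-cohomology on $(1,0)$-forms gives $\xi^{1,0}=\partial g$ for some smooth $g$. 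When $\omega$ is real we may take $\xi$ real, so that $g=\bar h$. Step (c): compute
\begin{equation*}
\eta=\overline{\partial}\partial g+\partial\overline{\partial}h=\partial\overline{\partial}(h-g)=i\partial\overline{\partial}\phi_0,\qquad \phi_0\triangleq-i(h-g).
\end{equation*}
In the real case $g=\bar h$ gives $\phi_0=2\operatorname{Im}h$, which is real.

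For $\sigma$-invariance, by Part 1 we may first replace $\omega_{Y,0}$ by the cohomologous $\sigma_Y$-invariant average $\frac{1}{\operatorname{ord}_{\sigma}}\sum_{k}(\sigma_Y^k)^{\star}\omega_{Y,0}$, after which $\eta$ is $\sigma$-invariant; then averaging $\phi_0\mapsto\frac{1}{\operatorname{ord}_{\sigma}}\sum_k(\sigma^k)^{\star}\phi_0$ preserves $i\partial\overline{\partial}\phi_0=\eta$ while making $\phi_0$ $\sigma$-invariant. The one delicate technical ingredient is the Dolbeault K\"unneth formula on the non-compact product $\Delta^{\star}\times Y$: its validity rests on Steinness of $\Delta^{\star}$ and finiteness of Dolbeault cohomology on the compact factor $Y$. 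Once that input is granted, the remainder is a routine bidegree manipulation.
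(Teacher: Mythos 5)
Your proposal is correct, but it takes a genuinely different route from the paper's. The paper argues entirely through the explicit four\--component decomposition $\omega=a\,i\,du\,d\overline{u}+du\wedge F^{0,1}+d\overline{u}\wedge F^{1,0}+\omega_{Y}(u)$: expanding $d\omega=0$ componentwise gives constancy of $[\omega_{Y}(u)]$, a fiberwise $i\partial_{Y}\overline{\partial}_{Y}$\--lemma (with smooth dependence on $u$) yields a vertical potential $\phi$, the hypothesis $b^{1}(Y)=0$ enters as $H^{0}(Y,\Omega^{1}_{Y})=0$ to identify the mixed components $F^{1,0},F^{0,1}$ with $\partial_{Y}(\partial\phi/\partial\overline{u})$ and $\overline{\partial}_{Y}(\partial\phi/\partial u)$, and the residual horizontal term $f(u)\,i\,du\,d\overline{u}$ is absorbed by one\--variable $\overline{\partial}$\--solvability on $\Delta^{\star}$ (Claim~\ref{clm DeltaC solvability Hormander}). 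You instead argue globally: de Rham K\"unneth to get $d$\--exactness of $\omega-\pi_{Y}^{\star}\omega_{Y,0}$, then $H^{0,1}(\Delta^{\star}\times Y)=0$ to run a global $i\partial\overline{\partial}$\--lemma. Your version is shorter and cleanly isolates the two places where $b^{1}(Y)=0$ is used, but it outsources the substance to the Dolbeault K\"unneth formula on a non\--compact product, which is a genuine but heavy input (Kaup/Cassa, via completed tensor products); note that the only vanishing you need, $H^{0,1}(\Delta^{\star}\times Y)=0$, admits a direct proof by exactly the fiberwise\--solve\--plus\--one\--variable\--$\overline{\partial}$ scheme the paper runs, so the two arguments are close cousins underneath. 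The paper's explicit route also exhibits $\phi_{0}$ as a fiberwise potential plus a radial correction, which is exploited later in \eqref{equ RFALG potential regarding w}. Two points to tighten: in the real case, justify $g=\overline{h}$ by $\xi^{1,0}=\overline{\xi^{0,1}}=\overline{\overline{\partial}h}=\partial\overline{h}$, or simply take the real part of $\phi_{0}$ at the end as the paper does; and your replacement of $\omega_{Y,0}$ by its $\sigma_{Y}$\--average changes the normalization in the statement, although it does correctly address the fact that $\sigma$\--invariance of $\omega$ only gives $\sigma_{Y}^{\star}\omega_{Y}(\zeta u_{0})=\omega_{Y}(u_{0})$ rather than invariance of $\omega_{Y}(u_{0})$ itself.
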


 Our consideration here is different from the $i\partial \overline{\partial}-$solvability for elliptic fibrations  \cite{HeinThesis, HeinJAMES, CVZALG, GW},  because Betti number of a torus does not vanish,  but $b^{1}(Y)=0$ is an important condition in Lemma \ref{lem strong iddbar}.    Standard Hodge decomposition obviously ``fails" in  this local ``toy" case i.e.  $b^{1}[\Delta^{\star}\times Y]=1$, but our point is that the K\"unneth formula for $H^{1,1}$ is still true:
   $$H^{1,1}[\Delta^{\star}\times Y,\mathbb{C}]=H^{1,1}[Y,\mathbb{C}],\ H^{1,1}[\Delta^{\star}\times Y,\mathbb{R}]=H^{1,1}[Y,\mathbb{R}].$$
We recall the pedestrian language that  $H^{1,1}$ means the vector space of all $(1,1)-$classes i.e. cohomology classes in $H^{2}$ represented by $(1,1)-$forms (either complex or real, depending on the co-efficient symbol). 

  \begin{proof}Let  $\rho_{1},\ \rho_{2}$ be projections from  $\Delta^{\star}\times Y$ to the 
$\Delta^{\star}$ and $Y$ components, respectively. Under the decomposition
\begin{eqnarray*}& &\wedge^{1,1}[\Delta^{\star}\times Y]
\\&=&\rho_{1}^{\star}\wedge^{1,1}[\Delta^{\star}] \oplus (\rho_{1}^{\star}\wedge^{1,0}[\Delta^{\star}]\wedge  \rho_{2}^{\star}\wedge^{0,1}[Y])\oplus (\rho_{1}^{\star}\wedge^{0,1}[\Delta^{\star}]\wedge  \rho_{2}^{\star}\wedge^{1,0}[Y])
\\& & \oplus  \rho_{2}^{\star}\wedge^{1,1} [Y],
\end{eqnarray*}
we write   \begin{equation}\label{equ -1 iddbar}\omega=aidu d\overline{u}+du\wedge F^{0,1}+d\overline{u}\wedge F^{1,0}+\omega_{Y}(u)\end{equation}
according to the order of the $4-$components  where $a$ is a function, and \\$\ F^{1,0}, F^{0,1},\ \omega_{Y}(u)$ are sections of $\rho_{2}^{\star}\Omega^{1,0}[Y],\ \rho_{2}^{\star}\Omega^{0,1}[Y],\ \rho_{2}^{\star}\Omega^{1,1}[Y]$ respectively. \\

Step 1: Independence of fiber. 

Since the restriction of $\omega$ on each fiber is closed under the fiberwise exterior differential $d_{Y}$,  and the exterior differential in the $
\Delta^{\star}\subset \mathbb{C}-$component  can be decomposed as the following 
$$d_{\mathbb{C}}=du\wedge \frac{\partial}{\partial u}+d\overline{u}\wedge \frac{\partial}{\partial \overline{u}}\  (\textrm{and}\ d=d_{\mathbb{C}}+d_{Y}), $$
we calculate
 \begin{eqnarray}\label{equ dw=0}0&=&d\omega
 \\&=& idu d\overline{u}[d_{Y}a+i\frac{\partial F^{0,1}}{\partial \overline{u}}-i\frac{\partial F^{1,0}}{\partial u}]+du[\frac{\partial \omega_{Y}}{\partial u}-d_{Y}F^{0,1}] \nonumber
+d\overline{u}[\frac{\partial \omega_{Y}}{\partial \overline{u}}-d_{Y}F^{1,0}].
  \end{eqnarray}
Then 
\begin{equation}
\frac{\partial \omega_{Y}}{\partial u}=d_{Y}F^{0,1},\ \ \frac{\partial \omega_{Y}}{\partial \overline{u}}=d_{Y}F^{1,0}.
\end{equation}
Take cohomology class $[ \ \cdot \ ]$ in $H^{1,1}[Y,\mathbb{C}]$, we find 
\begin{equation}
\frac{\partial [\omega_{Y}]}{\partial u}=\frac{\partial [\omega_{Y}]}{\partial \overline{u}}=0.
\end{equation}
This implies $[\omega_{Y}(u)]\in H^{1,1}[Y,\mathbb{C}]$ is constant in $u$.\\

Step 2: There is an unique smooth function $\phi$ on $\Delta^{\star}\times Y$
with fiber-wise $0-$average such that 
 \begin{equation}\label{equ fiber-wise iddbar}\omega_{Y}(u)=\omega_{Y,0}+i\partial_{Y} \overline{\partial}_{Y}\phi.\end{equation} This is because  $\omega_{Y}(u)$ and $\omega_{Y,0}$ are cohomologous.  Fiber-wise $i\partial_{Y} \overline{\partial}_{Y}-$solutions with $0-$average define a function on the fibration $\Delta^{\star}\times Y$ that  is  smooth by implicit function theorem. Please see  \cite[2.1]{TWY} about related discussion for  semi Ricci flat metrics introduced by \cite{GSVY}.  \\

Step 3: If $b^{1}(Y)=0$, the solution $\phi$ in \eqref{equ fiber-wise iddbar} actually solves the equation \eqref{equ form of w iddbar} on the fibration modulo pullback of a $(1,1)-$form on $\Delta^{\star}$ i.e. $i\partial \overline{\partial}$ of a pullback function  from $\Delta^{\star}$ (independent of $Y$). 

Namely, the decomposition \eqref{equ fiber-wise iddbar} and the vanishing of each component in \eqref{equ dw=0} says
 \begin{eqnarray}\label{equ 3 iddar}& &d_{Y}a+i\frac{\partial F^{0,1}}{\partial \overline{u}}-i\frac{\partial F^{1,0}}{\partial u}=0,
 \\& & i\partial_{Y} \overline{\partial}_{Y}\frac{\partial \phi}{\partial u} -d_{Y}F^{0,1}=0,\label{equ 1 iddar}
\\& & i\partial_{Y} \overline{\partial}_{Y}\frac{\partial \phi}{\partial \overline{u}}-d_{Y}F^{1,0}=0.\label{equ 0 iddar}
  \end{eqnarray}

The $(1,1)-$part of  \eqref{equ 0 iddar} must vanish,  which means the $(1,0)-$form $$i\partial_{Y} \frac{\partial \phi}{\partial \overline{u}}+F^{1,0}\ \ \textrm{is}\ \ \overline{\partial}_{Y}-\textrm{closed. Hence it is a holomorphic}\  (1,0)-\textrm{form  on}\  Y.$$ The assumption $b^{1}(Y)=0$  and Hodge decomposition implies vanishing of $H^{1}(Y, O_{Y})=H^{0}(Y, \Omega^{1}_{Y})$.  Therefore the above form must be $0$ i.e. 
\begin{equation} \label{equ 4 iddar}F^{1,0}=-i\partial_{Y} \frac{\partial \phi}{\partial \overline{u}}.\ \ \ \textrm{Similarly by}\ \eqref{equ 1 iddar},\ F^{0,1}=i\overline{\partial}_{Y} \frac{\partial \phi}{\partial u}. \end{equation}
  Plugging \eqref{equ 4 iddar} into \eqref{equ 3 iddar}, we find
  $$d_{Y}a- \partial_{Y} (\frac{\partial^{2} \phi}{\partial u \partial \overline{u}})- \overline{\partial}_{Y} (\frac{\partial^{2} \phi}{\partial u \partial \overline{u}})=0.$$
  This means $d_{Y}(a-\frac{\partial^{2} \phi}{\partial u \partial \overline{u}})=0$. Therefore
\begin{equation}\label{equ 5  iddar}a=\frac{\partial^{2} \phi}{\partial u \partial \overline{u}}+f(u)\end{equation}
where $f$ only depends on $u$,  not $Y$.  Plugging \eqref{equ fiber-wise iddbar}, \eqref{equ 4  iddar}, \eqref{equ 5  iddar} into the initial decomposition \eqref{equ -1 iddbar}, we find
 \begin{equation}\label{equ 6 iddar}
 \omega=f(u)idud\overline{u}+i\partial \overline{\partial}\phi+\omega_{Y,0}.
 \end{equation}
 
 Apply the ``un-conditional"  solvability Claim \ref{clm DeltaC solvability Hormander}, we find a function $h$ of only $u$ such that 
 $$i\partial \overline{\partial}h=f(u)idud\overline{u}.$$
Denote $\phi+h$ by $\phi_{0}$, the proof of  \eqref{equ form of w iddbar}
is complete. 

If $\omega$ is real, simply take the real part of $\phi_{0}$ so that \eqref{equ form of w iddbar} still holds.  In the $\sigma-$invariant case, because $\sigma$ is an bi-holomorphism (on both the fibration and the fiber), and the restriction $\omega_{Y,0}$ is still $\sigma-$invariant on $Y$, we   simply take  the average \begin{equation}\label{equ ave function sigma action}\underline{\phi}_{0}\triangleq \frac{\phi_{0}+\sigma^{\star}\phi_{0}+...+(\sigma^{ord_{\sigma}-1})^{\star}\phi_{0}}{ord_{\sigma}}\end{equation}
which is  $\sigma-$invariant  and still solves  \eqref{equ form of w iddbar}.   \end{proof}

\section{Constraint, iso-trivial ansatz, and proof of main results \label{sect ansatz}}
\subsection{Hein and Haskins-Hein-Nordstr\"om technique}
The  version  of H-HHN technique here  does not immediately work in the cylindrical case c.f.  \cite{HeinJAMES, HeinThesis, HHN}. It  requires the volume growth of the model metric to be strictly faster than linear (which we do not expect to be  sufficient). See how  $\beta>0$ in \eqref{equ lower bound on omega0} implies the modified real $(1,1)-$form $\omega+\eta$ is positive definite when error $<0$. 
\begin{lem}\label{lem constraint}Let  $(X,\omega)$ be a weak $\beta-$ALG K\"ahler manifold,  $R_{3}>0$ be large enough, and $\Delta^{\star}=\{|z|>R_{3}\}=\{|u|<\frac{1}{R_{3}}\}$. Suppose $\Upsilon$ is a (real) volume form such that $\frac{\Upsilon}{\omega^{d}}>0$ everywhere and $\frac{\Upsilon}{\omega^{d}}-1$ is integrable with respect to $\omega^{d}$. Then there is a  positive constant $t_{0}$ (see \eqref{equ t0} below) with the following property.  Consider the coordinate  $u=\frac{1}{z}$
near $\infty$ (denote  $u=\rho e^{i\theta}$) 
 and let  \begin{equation}\label{equ eta def}\eta\triangleq s\chi(\rho t)\rho d\rho\wedge d\theta=s\chi(\rho t) \frac{i}{2}du d\overline{u},\end{equation}
where $\chi(y)$ is a standard cutoff function that $\equiv 1$ when $2\leq y \leq 3$, but vanishes if $y\leq 1$ or $y\geq 4$. For any $t>t_{0}$, there is   $s$ such that
  $\omega_{0}\triangleq \omega+\eta$ is  K\"ahler  (positive definite)  that  satisfies constraint $\int_{X}(\omega^{d}_{0}-\Upsilon)=0$.
\end{lem}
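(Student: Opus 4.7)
The key algebraic observation is that $\eta$ is the pullback of a $(1,1)$-form from the base $\mathbb{C}_u$, so $\eta\wedge\eta=0$ (only two differentials $du,d\bar u$ are available), and the binomial expansion collapses to
\[ \omega_0^d \;=\; (\omega+\eta)^d \;=\; \omega^d + d\,\eta\wedge\omega^{d-1}. \]
Letting $I\triangleq \int_X(\Upsilon-\omega^d)$, finite by the integrability hypothesis on $\Upsilon/\omega^d-1$, the constraint $\int_X(\omega_0^d-\Upsilon)=0$ reduces to the scalar equation linear in $s$,
\[ d\,s\cdot V(t)\;=\;I,\qquad V(t)\;\triangleq\;\int_X \chi(\rho t)\,\tfrac{i}{2}\,du\,d\bar u\wedge\omega^{d-1}, \]
so once $V(t)\neq 0$ the value $s=s(t)$ is forced.

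The second step is to verify $V(t)>0$ and estimate its order in $t$. Since $\tfrac{i}{2}du\,d\bar u$ is a semipositive $(1,1)$-form and $\omega^{d-1}$ a positive $(d-1,d-1)$-form, their wedge is a non-negative top form which is strictly positive on the fibre direction of the ALG end, so $V(t)>0$. For the size, I would use the weak $\beta$-ALG quasi-isometry \eqref{equ model metric quasi isometry general beta}: on the annulus $\{1/t\leq \rho\leq 4/t\}$ supporting $\chi(\rho t)$, $\omega$ is uniformly comparable to $\omega_{model}=\tfrac{i}{2}\lambda^2\rho^{-2\beta-2}\,du\,d\bar u+\omega_Y$ (using $|z|^{2\beta-2}dz\,d\bar z=\rho^{-2\beta-2}du\,d\bar u$). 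Because $du\wedge du=0$, wedging $\tfrac{i}{2}du\,d\bar u$ with $\omega_{model}^{d-1}$ kills every term except the pure fibre piece $\omega_Y^{d-1}$, yielding
\[ V(t)\;\approx\;c\cdot\Vol(Y)\int_{\mathbb{C}} \chi(\rho t)\,\tfrac{i}{2}\,du\,d\bar u\;=\;c\cdot\Vol(Y)\cdot\frac{2\pi}{t^2}\int_0^\infty \chi(y)\,y\,dy\;=\;\frac{c_1}{t^2}, \]
with $c_1>0$ depending only on $(Y,\omega_Y)$, $\chi$, $C_{X,g}$ and the cyclic order $n$. Hence the forced solution satisfies $|s(t)|=O(t^2)$.

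Finally I would check positivity of $\omega_0=\omega+\eta$ on $X$. Outside the support of $\eta$ this is immediate. Inside, the lower half of \eqref{equ model metric quasi isometry general beta} gives, in the $u$-coordinate,
\[ \omega \;\geq\;\frac{1}{C_{X,g}}\!\bigl(\tfrac{i}{2}\lambda^2\rho^{-2\beta-2}\,du\,d\bar u+\omega_Y\bigr), \]
so the $du\,d\bar u$-eigenvalue of $\omega$ is bounded below by a positive multiple of $(t/4)^{2\beta+2}$ throughout $\rho\leq 4/t$. The correction $\eta$ has $du\,d\bar u$-coefficient of size at most $|s|\cdot\|\chi\|_{C^0}$, which is of order $t^2$. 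Thus $\omega_0>0$ reduces to the inequality $t^2\leq c'\,t^{2\beta+2}$, i.e.\ $c'\,t^{2\beta}\geq 1$, which holds for all $t$ past an explicit threshold
\[ t_0 \;=\; t_0\bigl(|I|,\lambda,C_{X,g},\Vol(Y),\|\chi\|_{C^0}\bigr). \]

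There is no serious obstacle; the subtle point is that the hypothesis $\beta>0$ is indispensable, since it forces the radial K\"ahler weight of $\omega$ to grow strictly faster than the $O(t^2)$ size of the correction $\eta$ needed to absorb the fixed mass defect $I$. This is precisely the ``strictly faster than linear volume growth'' restriction flagged in the statement, and the argument would break down in the cylindrical limit $\beta=0$.
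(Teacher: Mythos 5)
Your proposal is correct and follows essentially the same route as the paper: exploit $\eta\wedge\eta=0$ to reduce the constraint to a linear equation in $s$, use the quasi-isometry with the model to show the coefficient $V(t)$ is positive and of order $t^{-2}$ (so $|s|=O(t^2)$), and then invoke $\beta>0$ so that the radial eigenvalue of $\omega$, of order $t^{2\beta+2}$ on $\operatorname{supp}\chi(\rho t)$, dominates $\eta$ for $t$ past an explicit threshold. The paper phrases the solvability as a two-case sandwich argument on the sign of the mass defect rather than dividing by $V(t)$ directly, and additionally records the factor $1/\mathrm{ord}_\sigma$ from integrating on the quotient and the requirement $t>10/R_{0}$ so that $\operatorname{supp}\eta$ lies in the end, but these are cosmetic differences.
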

Since $\eta$ is closed, $\omega+\eta$ is closed.  The parameter $t$ controls the distance from supp$\eta$ to the origin. However, $s$ and $\eta$ are  not necessarily positive. 
\begin{proof}Because $\eta\wedge \eta=0$, constraint \eqref{equ constraint} for $\omega_{0}\triangleq \omega+\eta$ is equivalent to the following integral condition:
\begin{equation}\label{equ eta}
d\int_{X}\eta\wedge \omega^{d-1}=-\int_{X}(\omega^{d}-\Upsilon).\ \ \ 
\end{equation}

Denote the right  side  of \eqref{equ eta} by the real number $Err$. 
   Let the  sign of $s$ be  that of $Err$. Then $\eta$ descends to the quotient $\Delta^{\star}/ \langle \sigma\rangle$ as it only depends on $\rho$ thus is invariant under the action of $\sigma$. Let $K\geq 10$ be a quasi-isometry constant such that 
\begin{equation}\label{equ quasi isomeric omega0 to the product}\frac{(\frac{i}{2}\cdot \frac{dud\overline{u}}{|u|^{2+2\beta}}+\omega_{Y})}{K}\leq \omega\leq K (\frac{i}{2}\cdot  \frac{dud\overline{u}}{|u|^{2+2\beta}}+\omega_{Y}).\end{equation}

If $Err=0$, we are done. The point is to find $s$ when $Err\neq 0$.\\

Case I: Suppose $Err>0$.  Integrating the left side of \eqref{equ eta} using the explicit  $\eta$ \eqref{equ eta def}  and  \eqref{equ quasi isomeric omega0 to the product}, we find 
\begin{eqnarray}& &\frac{2\pi\cdot d!s Vol(Y)}{ord_{\sigma} K^{d-1}}\int^{\infty}_{0}\chi(\rho t)\cdot \rho d\rho \nonumber
\\& \leq & d\int_{X}\eta\wedge \omega^{d-1}=Err\nonumber
\\& \leq & \frac{2\pi\cdot K^{d-1}(d!)s Vol(Y)}{ord_{\sigma}}\int^{\infty}_{0}\chi(\rho t)\cdot \rho d\rho \label{equ start of constraint case I}
\end{eqnarray}
 When $Err$ is negative, the reverse inequality holds. The $d!$ is the product of  the $d$ in the middle line and the denominator of the volume form $\frac{\omega^{d-1}_{Y}}{(d-1)!}$ on Y.   The order of the orbifold group appears because the integral on the orbifold is $\frac{1}{ord_{\sigma}}$ times the integral on the cover. 

Let $t>\frac{10}{R_{3}}$ be large enough such that  supp$\chi(\rho t)\subset\Delta^{\star}$. Using that $$\int^{\infty}_{0}\chi(\rho t)\cdot \rho d\rho =\frac{\Gamma_\chi}{t^{2}}$$
where $\Gamma_\chi=\int^{\infty}_{0} \chi(y)ydy$  only depends on the model cutoff function, the target integral $d\int_{X}\eta\wedge \omega^{d-1}_{0}$ is bounded from above and below respectively  by $$\frac{2\pi\cdot K^{d-1}(d!)s Vol(Y)\Gamma_\chi}{ord_{\sigma}\cdot t^{2}}\ \ \textrm{and}\ \ \frac{2\pi\cdot (d!)s Vol(Y)\Gamma_\chi}{K^{d-1}\cdot ord_{\sigma}t^{2}}.$$ 
If  $s>\frac{K^{d-1}ord_{\sigma}\cdot Err\cdot t^{2}}{2\pi\cdot (d!) Vol(Y)\Gamma_\chi}$, or $s<\frac{ord_{\sigma}\cdot Err\cdot t^{2}}{K^{d-1}\cdot 2\pi\cdot (d!) Vol(Y)\Gamma_\chi}$,  the target integral is either $> Err$ or $<Err$ respectively. Since it is a linear function in $s$, there exists a unique $s_{0}$ in the indicated range
\begin{equation}\label{equ end of constraint case I} ( \frac{ord_{\sigma}\cdot Err\cdot t^{2}}{K^{d-1}\cdot 2\pi\cdot (d!) Vol(Y)\Gamma_\chi},\ \frac{K^{d-1}ord_{\sigma}\cdot Err\cdot t^{2}}{2\pi\cdot (d!) Vol(Y)\Gamma_\chi}),
\end{equation}
 such that the target integral equals $Err$. 
$Err>0$ implies  positivity of $s$ and non-negativity of  $\eta$, therefore also the positivity of  $\omega+\eta$.
The point is when $Err<0$, we can still make it K\"ahler. \\

Case II. Suppose $Err<0$. With sign changes, similar argument as from \eqref{equ start of constraint case I} to \eqref{equ end of constraint case I} still produces  a $s_{0}$ in the interval 
\begin{equation}\label{equ range of s0} (-\frac{K^{d-1}ord_{\sigma}\cdot |Err|\cdot t^{2}}{2\pi\cdot (d!) Vol(Y)\Gamma_\chi},\ -\frac{ord_{\sigma}\cdot |Err|\cdot t^{2}}{K^{d-1}\cdot 2\pi\cdot (d!) Vol(Y)\Gamma_\chi}).
\end{equation}
Hence \eqref{equ eta} holds for $\eta$ with $s=s_{0}$.
On the support of $\chi(\rho t)$ that lies in the annulus $\{\rho\in (\frac{1}{t}, \frac{4}{t})\}$ (we assume $t>\frac{10}{R_{0}}$ in order for $supp \eta\subset \Delta^{\star}$), the metric $\omega$ satisfies
\begin{equation}\label{equ lower bound on omega0}
\omega\geq \frac{(\frac{i}{2} \frac{dud\overline{u}}{|u|^{2+2\beta}}+\omega_{Y})}{K}\geq  \frac{(\frac{i}{4}\frac{dud\overline{u}}{|u|^{2+2\beta}}+\frac{i}{4}dud\overline{u}\cdot \frac{1}{32^{2+2\beta}}t^{2\beta+2}+\omega_{Y})}{K}. 
\end{equation}
We  relaxed  $\frac{1}{|u|^{2\beta+2}}$ to its  lower bound $\frac{1}{32^{2+2\beta}}t^{2\beta+2}$ on $supp \chi$.  That $s_{0}=O(t^{2})$ implies $$\eta=O(t^{2})\cdot  \frac{i}{2}du\wedge d\overline{u}$$ as the cutoff function has magnitude $\leq 1$. But $\beta>0$ and the factor of $t^{2\beta+2}$ implies the growth of $\omega$ in $t$ is strictly faster than $O(t^{2})$.  Namely, using that $s_{0}$ lies in the interval \eqref{equ range of s0},  when 
 $$t^{2\beta}> \frac{1000\cdot 32^{2+2\beta}K^{d}ord_{\sigma}\cdot |Err|}{2\pi\cdot (d!) Vol(Y)\Gamma_\chi},$$
 we have $$\frac{i}{4}dud\overline{u}\cdot \frac{1}{32^{2+2\beta}}t^{2\beta+2}>\eta.$$
Therefore $\omega+\eta$ is still a K\"ahler form and
$$\omega\geq \frac{(\frac{i}{2}\cdot\frac{dud\overline{u}}{|u|^{2+2\beta}}+\omega_{Y})}{2K}$$
on $supp \eta$. 
In summary, no matter what the sign of  $Err$ is, let 
\begin{equation}\label{equ t0}t>t_{0}\triangleq \max\{\frac{10}{R_{0}},  [\frac{1000\cdot 32^{2+2\beta}K^{d}ord_{\sigma}\cdot |Err|}{2\pi\cdot (d!) Vol(Y)\Gamma_\chi}]^{\frac{1}{2\beta}}\},\end{equation}
$\omega+\eta$ is K\"ahler. \end{proof}

 \subsection{Hein decay estimate  for Tian-Yau solution} 

An  $ALG(\frac{2\pi}{n})-$metric has $(K,2,0)-$polynomial growth  \cite[Section 1]{TY1} and  positive  injective radius due to curvature bound and volume ratio lower bound for all balls with radius $< 1$ \cite{Grant, CGT} etc.  The bound on covariant derivatives of curvatures also implies the quasi-finite geometry of order $2+\frac{1}{2}$  \cite[Proposition 1.2]{TY1}. The general existence \cite[Theorem 1.1]{TY1} provides a smooth solution $\phi$ to the target  Monge-Ampere equation \eqref{equ Target MA} such that 
 $\omega_{\phi}$ is quasi isometric to the reference metric $\omega_{0}$ i.e. there is a positive constant $C$ such that 
 \begin{equation}\frac{\omega_{0}}{C}\leq \omega_{0}+i\partial \overline{\partial}\phi\triangleq \omega_{\phi}\leq C\omega_{0}.\end{equation}

The target  equation \eqref{equ Target MA} can be written as
 \begin{equation}\label{equ difference between volume forms}
 \frac{(\omega_{0}+i\partial \overline{\partial}\phi)^{d}-\omega_{0}^{d}}{\omega_{0}^{d}}=e^{f}-1.
 \end{equation}
 We further write it as a second order elliptic equation with co-efficients depending on the unknown, as  \cite{TY1, TY2, HHN, Kovalev} etc. 
 \begin{equation}\label{equ Kovalev form of MA}
 \frac{i\partial \overline{\partial}\phi\wedge[\omega^{d-1}_{\phi}+...+\omega_{0}^{j}\wedge \omega_{\phi}^{d-1-j}+...+\omega_{0}^{d-1}]}{\omega_{0}^{d}}=e^{f}-1.
 \end{equation}
 See  related techniques in   \cite[Theorem 1.1]{TY1},  \cite{HeinThesis},  \cite{Kovalev},    and \cite{HHN}. 
  
 In the energy decay \eqref{equ Dirichlet energy decay}, abusing notation, we still denote  $\min\{\beta_{0},\frac{1}{1000}\}$ by $\beta_{0}$. Under the poly-cylinder that utilizes the Tian-Yau atlas on the fiber direction $Y$, the   $i\partial \overline{\partial}\phi$ below  is viewed as a matrix under the chart. This norm is equivalent to the one  viewing it as a tensor with derivative being co-variant derivatives, by the bounds on the pullback metric under Tian-Yau coordinates. See Appendix \ref{Appendix Tian Yau atlas} for a more comprehensive review and the standard norms. 
 \begin{lem}(Hein \cite{HeinThesis}) \label{equ initial decay}
$ |i\partial \overline{\partial}\phi|_{C^{k}[A(r,\infty)]}\leq \frac{C_{\beta_{0},k}}{r^{\frac{\beta_{0}}{2}}}$.
%and $C_{\beta_{1}}$ only depends 
%on the tautological cover, $C_{\beta_{0}}$, and $f$.

 \end{lem}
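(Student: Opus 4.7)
Since $\omega_0$ is iso-trivial, on the end we have $\omega_0=\omega_{\text{model}}$ and the Ricci potential $f\equiv 0$, so the Monge-Amp\`ere equation \eqref{equ Kovalev form of MA} collapses to
\[
L_\phi(\phi)\triangleq\frac{i\partial\overline{\partial}\phi\wedge\sum_{j=0}^{d-1}\omega_0^{j}\wedge\omega_\phi^{d-1-j}}{\omega_0^{d}}=0,
\]
a linear (in the second derivatives of $\phi$) uniformly elliptic equation whose ellipticity constants are controlled by the Tian-Yau quasi-isometry $\omega_\phi\asymp\omega_0$. Pulled back to the Tian-Yau poly-cylinder atlas (Appendix \ref{Appendix Tian Yau atlas}), the ALG end becomes a union of unit balls of uniformly bounded geometry; interior Schauder estimates for $L_\phi$ therefore hold at unit scale with $R$-independent constants. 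The plan is to supplement these unit-scale interior estimates with a global $L^{2}$-energy decay, and then trade $L^{2}$ for pointwise by one Moser/De Giorgi iteration on each poly-cylinder.

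\textbf{Energy decay.} Let $\chi_R$ be a radial cutoff supported in the fibred annulus $A(R/2,4R)$ and equal to $1$ on $A(R,2R)$, and let $\bar\phi_R$ denote the fibrewise average of $\phi$ over the $Y$-factor. Testing $L_\phi(\phi)=0$ against $\chi_R^{2}(\phi-\bar\phi_R)$ and integrating by parts --- the closedness of each mixed product $\omega_0^{j}\wedge\omega_\phi^{d-1-j}$ eliminates curvature terms --- one obtains a Caccioppoli-type inequality on annular regions. Combining this with the fibrewise Poincar\'e inequality on $Y$, whose constant is $R$-independent because the fibre is fixed and compact, and then performing Widman's hole-filling trick, yields
\[
E(R)\triangleq\int_{A(R,\infty)}|d\phi|_{\omega_0}^{2}\,\omega_0^{d}\;\leq\;\theta\,E(R/2)
\]
for some $\theta=\theta(K,Y)\in(0,1)$, and iteration then gives $E(R)\leq C R^{-\alpha}$ for an $\alpha>0$ explicit in $\theta$. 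Finiteness of $E$ at the first step is supplied by the hypotheses of Proposition \ref{prop Schwartz decay for grav instantons}: the $C^{0}$-decay $i\partial\overline{\partial}\phi=O(r^{-\delta_0})$ integrates to give bounded energy on each finite annulus, and the mild exponential growth $\phi=O(e^{\tau_0 r})$ with $\tau_0<k_0$ is absorbed by the exponentially-decaying tails in Mazja-Plamenevskii-type weight estimates.

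\textbf{From $L^{2}$ to $C^{k}$, and the main obstacle.} A unit poly-cylinder centred at a point with radial coordinate $\sim R$ has $\omega_0$-volume of order $R^{-1}$ in the normalised chart \eqref{equ normalized model}; a Moser iteration on that unit poly-cylinder therefore converts the annular $L^{2}$-bound $E(R)\leq CR^{-\alpha}$ on $|d\phi|$ into a pointwise bound, at the cost of the factor $R^{1/2}$ coming from the $1$-dimensional base volume growth. This loss is exactly what produces the exponent $\beta_0/2$ rather than $\beta_0$ in the stated rate, and it is the same phenomenon that later forces the sector-shaped annulus refinement in \eqref{equ 1 Green func est} of the Schwartz-decay proof. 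Applying interior Schauder estimates to $L_\phi(\phi)=0$ in each poly-cylinder then upgrades the pointwise decay of $|d\phi|$ to the claimed $C^{k}$-decay of $i\partial\overline{\partial}\phi$ with the same polynomial rate. The main analytic obstacle is verifying that the hole-filling constant $\theta$ is genuinely $<1$ uniformly in $R$; this rests crucially on the product structure $\mathbb{C}\times Y$ of the model with the compact fibre $Y$ fixed (so that the fibrewise Poincar\'e constant is $R$-independent), together with the Tian-Yau quasi-isometry keeping the elliptic coefficients of $L_\phi$ comparable to those of the model Laplacian uniformly at infinity.
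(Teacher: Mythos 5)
Your overall architecture matches the paper's: the lemma is proved by combining (i) Hein's Dirichlet energy decay $E(R)\leq C_{\beta_0}R^{-\beta_0}$, which the paper derives in its appendix by exactly the route you describe (the energy inequality from the Monge--Amp\`ere structure, the product Poincar\'e inequality of Lemma \ref{lem Poincare} combining the annulus constant $P_A\sim R^2$ with the fixed fibre constant $P_Y$, and hole-filling to get $E(2^{i+1}r_0)\leq \mu E(2^ir_0)$), with (ii) local elliptic estimates on unit balls of the Tian--Yau quasi-atlas. The paper localizes by Neumann--Poincar\'e on a small Euclidean ball to bound $\|\phi-c_\phi\|_{L^2}$ and then applies interior Schauder to the linearized equation \eqref{equ Kovalev form of MA}, whose coefficients are controlled by the Tian--Yau higher-order bounds; your substitution of a Moser iteration on $|d\phi|$ followed by Schauder is an acceptable equivalent.

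However, your accounting of where the exponent $\beta_0/2$ comes from is wrong, and if taken literally it would not produce the stated rate. There is no ``loss of $R^{1/2}$ from the one-dimensional base volume growth'' in this step: restricting the integral bound $\int_{A(R,\infty)}|\nabla\phi|^2\leq C R^{-\beta_0}$ to a unit ball can only decrease the left-hand side, so the local $L^2$-norm of $\nabla\phi$ (or of $\phi-c_\phi$ after Poincar\'e) is at most $CR^{-\beta_0/2}$ \emph{purely because one takes a square root of the $L^2$-norm squared}; Moser/Schauder then preserve that rate. Your claim that a unit poly-cylinder at radius $R$ has $\omega_0$-volume of order $R^{-1}$ is also false --- in the normalized model \eqref{equ normalized model} with $\beta=1$ the end is the flat product $\frac{i}{2}dzd\overline{z}+\omega_Y$ and unit poly-cylinders have volume $O(1)$. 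The sector-shaped-annulus device of \eqref{equ 1 Green func est} is not the ``same phenomenon'': it repairs a genuine $R^{1/2}$ loss that occurs in the \emph{opposite} direction, when a pointwise bound on the source $Q_0$ is converted to an $L^2$ bound over a full annulus of volume $\sim R$ before the Mazja--Plamenevskii estimate is applied to the Newtonian potential. In the present lemma the starting point is already an integral over the end, so no such loss occurs and no sector refinement is needed. Correct this bookkeeping and the rest of your argument goes through.
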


\begin{proof} This is direct from  Hein Dirichlet energy decay \eqref{equ Dirichlet energy decay}.  

Neumann Poincare inequality on $B_{euc,x}(\mu_{2})$ implies
\begin{equation}\label{equ energy decay 0}
\int_{B_{euc,x}(\frac{\mu_{2}}{2})}|\phi-c_{\phi}|^{2}\leq C\int_{B_{euc,x}(\mu_{2})}|\nabla \phi|^{2}\leq C_{\beta_{0}}r^{-\beta_{0}},
\end{equation}
where $c_{\phi}$ is a constant depending on $\phi$. Tian-Yau higher-order bound \cite[Theorem 1.1, last sentence]{TY1}
implies  the co-efficients of \eqref{equ Kovalev form of MA}, as a second order elliptic equation in $\phi$,  are bounded in $C^{1}[B_{euc,x}(\frac{\mu_{2}}{4})]$ in terms of the Ricci potential/volume density $f$, and the ansatz $\omega_{0}$. Then interior Schauder estimate for linear equation and asymptotics of $e^{f}-1$ says
\begin{equation}|\phi-c_{\phi}|_{C^{2,\frac{1}{2}}[B_{euc,x}(\frac{\mu_{2}}{8})]}\nonumber \leq C|e^{f}-1|_{C^{\frac{1}{2}}[B_{euc,x}(\frac{\mu_{2}}{4})]}+C|\phi-c_{\phi}|_{L^{2}[B_{euc,x}(\frac{\mu_{2}}{4})]}\nonumber
\leq  \frac{C_{\beta_{0}}}{r_{x}^{\frac{\beta_{0}}{2}}}.
\end{equation}
 Lemma \ref{equ initial decay} is proved when $k=2$. By the Schauder bounds on the metric under balls in 
Tian-Yau coordinate (Section \ref{Appendix Tian Yau atlas}),   
differentiating the equation, we find higher order bounds. 
\begin{equation}|\phi-c_{\phi}|_{C^{k+2, \frac{1}{2}}[B_{euc,x}(\frac{\mu_{2}}{10})]}\leq  \frac{C_{\beta_{0},k}}{r_{x}^{\frac{\beta_{0}}{2}}}
\end{equation}
This particularly implies $ |i\partial \overline{\partial}\phi|_{C^{k}[A(r,\infty)]}\leq \frac{C_{\beta_{0},k}}{r^{\frac{\beta_{0}}{2}}}$. \end{proof}

\subsubsection{Finishing the proof of main results}
\begin{proof}[Proof of  Theorem \ref{Thm},  Corollary \ref{Cor >2}, and existence part of Corollary \ref{Cor 2}:].\\ \\
%By gluing Lemma \ref{lem Gluing for ansatz}.1  below, for any large enough $R$, we ``glue" the  K\"ahler metric $\omega$ with the ALG model into an iso-trivial ALG metric $\omega_{isotr}$  that equals $\omega$ when $r\leq R-10$, but equals the ALG model  when $r\geq R+10$.  
Step 1: ALG metric that is equal to a (Ricci flat) ALG model outside a compact set. 

The assumed K\"ahler metric yields an iso-trivial ALG metric $\omega_{isotr}$. 
Namely, the difference of the expression \eqref{equ form of w iddbar} for $\omega$
and the ALG model  \eqref{equ model orb product metric} yields 
 \begin{eqnarray}& &\omega-\frac{i}{2} dzd\overline{z}-\omega_{RF, Y}=i\partial \overline{\partial}\phi_{0}-\frac{i}{2} dzd\overline{z}+\omega_{Y}-\omega_{RF,Y}\nonumber
 \\&=&i\partial \overline{\partial}(\phi_{0}-\phi_{RF,Y}-\frac{|z|^{2}}{2}). 
 \end{eqnarray}
 where $\omega_{RF,Y}$ is the unique Ricci flat K\"ahler metric cohomologous to  $\omega_{Y}$, and  $\phi_{RF,Y}$ is the Ricci flat potential: 
 \begin{equation}
 \omega_{RF,Y}-\omega_{Y}=i\partial \overline{\partial}\phi_{RF,Y}, \textrm{where}\ \omega_{RF,Y}^{d-1}=\nu_{d-1}\Omega_{Y}\wedge \bar{\Omega}_{Y}\ \textrm{and}\ \int_{Y}\omega_{RF,Y}=0.
 \end{equation}

To obtain $\phi_{0}$, we applied Lemma \ref{lem strong iddbar} with the coordinate change $\rho=\frac{1}{r},\ u=\frac{1}{z}$ near $z=\infty$. Gluing Lemma \ref{lem Gluing for ansatz} says that when $t$ is large enough, 
$$\omega_{isotr}\triangleq \omega+i\partial \overline{\partial}\chi (-\phi_{0}+\phi_{RF,Y}+\frac{|z|^{2}}{2})+tb_{0}.$$
is the desired iso-trivial K\"ahler metric, where $\chi$ is the cutoff function supported on the end and vanishes in a compact set containing the exceptional divisor.\\\

Step 2: Ansatz with constraint.

The desired ansatz $\omega_{0}$ is obtained applying Lemma \ref{lem constraint} to $\omega_{isotr}$ i.e. 
\begin{equation}\label{equ RFALG potential 0}
\omega_{0}=\omega_{isotr}+\eta= \omega+i\partial \overline{\partial}\chi (\phi_{0}-\phi_{RF,Y}+\frac{|z|^{2}}{2})+tb_{0}+\eta.
\end{equation}

Since the difference between $\omega_{0}$ and $\omega_{isotr}$ is compactly supported, $\omega_{0}$ is iso-trivial as well. Since $f$ is compactly supported,
it meets the decay condition required in Tian-Yau solvability \cite[Thm 1.1]{TY1}.\\  %See the end of Section \ref{sect ddbar}. 

Step 3: Tian-Yau solution, Hein decay, and Schwartz decay.

 Apply Tian-Yau solvability to the isotrivial ansatz $\omega_{0}$ with constraint, we find the solution. We can write down the difference between the ALG Ricci flat metric $\omega_{RFALG}$ with the initial K\"ahler metric $\omega$ more explicitly. Applying the radial solvability in Claim \ref{clm DeltaC solvability Hormander} to the two radial compact supported bump forms in \eqref{equ RFALG potential}, we find smooth radial functions $\phi_{rb,g} $ and $\phi_{rb,c}$ (potentials of radial bump forms for gluing and constraint respectively) such that 
$$tb_{0}=i\partial \overline{\partial}\phi_{rb,g} ,\ \ \eta=i\partial \overline{\partial}\phi_{rb,c}.$$
Then 
\begin{equation}\label{equ RFALG potential regarding w -1}
\omega_{RFALG}-\omega=i\partial \overline{\partial}[\chi (\phi_{0}-\phi_{RF,Y}+\frac{|z|^{2}}{2})+\phi_{rb,g}+\phi_{rb,c}+\phi_{TY}],
\end{equation}
and the ALG Ricci flat potential $\phi_{RFALG}$ \eqref{equ RFALG potential} equals 
\begin{equation}\label{equ RFALG potential regarding w} \chi (\phi_{0}-\phi_{RF,Y}+\frac{|z|^{2}}{2})+\phi_{rb,g}+\phi_{rb,c}+\phi_{TY}.\end{equation}
Except the Tian-Yau potential $\phi_{TY}$ \cite[Theorem 1.1]{TY1} relative to  $\omega_{0}$, the other potential functions \eqref{equ RFALG potential regarding w} relative to the initial K\"ahler metric $\omega$ in Theorem \ref{Thm} are supported on the end i.e. they vanish in a compact set containing the exceptional divisor.  Particularly, they  are smooth functions on the crepant resolution  $X$. Since the resolution map is not necessarily a smooth orbifold map, the pullback of a smooth function from the orbifold to the resolution is not known to still be smooth if it does not vanish near the fixed  locus. %$O\times Fix(\sigma)$. 

Hein decay (Lemma \ref{equ initial decay}) fulfills the condition in  Schwartz decay Proposition  \ref{prop Schwartz decay for grav instantons}. Therefore the metric has Schwartz decay toward the ALG model. Proof for existence is complete.\\

Step 4: Monge-Ampere partial uniqueness.

By metric equivalence between the ansatz $\omega_{0}$ and $\omega_{RFALG}$, the Poincare inequality Lemma \ref{lem Poincare} for ALG model yields the weak Poincar\'e inequality for $\omega_{RFALG}$. This  fulfills the requirement for  uniqueness of bounded $C^{1,1}-$solutions \cite{YW} that implies  the desired uniqueness here.\\

Step 5: Examples of $ALG_{\infty}$ Ricci flat K\"ahler $3-$folds via $K3$ with purely non-symplectic automorphism of finite order. 

 Because all $K3'$s with a non-symplectic automorphism $\sigma$ of finite order is polarized i.e. admits positive line bundle $L_{K3}$ (for example, see \cite{AST}).  The ``average" 
$$L_{0}\triangleq L_{K3}\otimes \sigma^{\star}L_{K3}\otimes...\otimes (\sigma^{ord_{\sigma}-1})^{\star}L_{K3}$$ is a $\sigma-$invariant positive line bundle  on the $K3$. Denote the $\sigma-$invariant positive curvature form of $L_{0}$ by $\omega_{K3,\sigma}$. The 
ALG model $$\frac{i}{2}\lambda^{2} dzd\overline{z}+\omega_{K3,\sigma}$$ is a K\"ahler orbifold metric on the $3-$orbifold $$\frac{\mathbb{C}\times K3}{\langle\sigma\rangle}$$
whose orbifold groups are obviously in $SL(3,\mathbb{C})$  by our construction.  %Discussions in \cite[Page 137]{Joyce} says K\"ahler crepant resolution exists. 
Then  discussions  \cite[Page 137 above 6.6.1]{Joyce} say there is a K\"ahler crepant resolution. The point (of Lemma \ref{lem strong iddbar}) is that  we only need a K\"ahler metric at this step, and  no completeness  is required.  For the examples in Corollary \ref{Cor 2}, the K\"ahler metric on $X$ can be taken as the Bergman metric \eqref{equ Bergman} below.

By \cite[Main Theorem 3, Corollary 5, Table 1]{MO}, an integer among the list  \eqref{equ composite angles} is the order of an purely non-symplectic automorphism on a $K3$.  Corollary \ref{Cor >2} and existence part of Corollary \ref{Cor 2} are proved.  \end{proof}
 %\begin{proof}[Proof of uniqueness part of Theorem \ref{Thm}:] %The ansatz $\omega_{0}$ is of sub-quadratic volume growth. Suppose $\omega^{\prime}_{RF}=\omega_{0}+i\partial\overline{\partial}\phi^{\prime}$ is also Ricci flat and quasi-isometric to the Tian-Yau solution $\omega_{RF}$, and $\phi^{\prime}$ is bounded.  Because Tian-Yau solution is quasi isometric and $i\partial\overline{\partial}\phi^{\prime}-$cohomologous  to the ansatz $\omega_{0}$, employing Lemma \ref{lem Ricci flat equ}, $\phi$ is also a bounded $C^{1,1}-$solution to the Monge-Ampere
 %$$[\omega^{\prime}_{RF}]^{d}=e^{f+h^{\prime}}\omega^{n}_{0},$$
%where $h^{\prime}$ is bounded pluri-subharmonic function  with limit $0$ along the sequence of points tending to $\infty$. Because $X$ has a Ricci flat metric, results by Cheng-Yau \cite{CY} imply $\underline{h}$ is a constant. There it must be $0$. 

%$\Delta_{\omega_\phi}\frac{\partial \phi}{\partial x_{k}}=-\omega_{\phi}^{i\overline{j}}\frac{\partial \omega_{i\overline{j}}}{\partial x_{k}}+\frac{\partial f}{\partial x_{k}}+\omega^{i\overline{j}}\frac{\partial \omega_{i\overline{j}}}{\partial x_{k}}$. 

%$|r^{\beta_{1}}i\partial \overline{\partial}\phi|_{C^{k,\alpha}[A(\rho,\infty)]}\leq \frac{C_{\beta_{1},k}}{r^{\beta_{1}}}$.

\section{Bergman  metric \label{sect orbifold metric}}

\subsection{ $A_{1}-$singularity and the crepant resolution\label{subsect Basics of $A_{n-1}-$singularity and the crepant resolution}}

 The action of $-Id$ on $\mathbb{C}^{2}$ defines the $A_{1}-$singularity $\frac{\mathbb{C}^{2}}{\langle \pm Id \rangle}$. This orbifold is isomorphic to  the affine variety $$V=\{(u_{0},u_{1},u_{2})|u_{0}u_{2}=u^{2}_{1}\subset \mathbb{C}^{3}\}$$ via the map 
 \begin{equation}\label{equ embed standard A1 cone}u_{0}=x^{2},\ u_{1}=xy,\ u_{2}=y^{2}.
 \end{equation}
We define two copies of $\mathbb{C}^{2}$ and  denote them by  $\mathbb{C}_{0}^{2}$ and $\mathbb{C}_{1}^{2}$, with coordinates $(\lambda_{0}, \mu_{2})$ and $(\lambda_{1}, \mu_{1})$ respectively. The  crepant resolution of  $A_{1}-$singularity is the gluing of them  by the map
\begin{equation}\label{equ gluing map C2}
\lambda_{1}=\frac{1}{\mu_{0}},\ \mu_{1}=\lambda_{0}\mu^{2}_{0}. 
\end{equation}

\begin{itemize}
\item The map
\begin{equation*}
u_{0}=\lambda_{0},\ u_{1}=\lambda_{0}\mu_{0},\ u_{2}=\lambda_{0}\mu^{2}_{0}. 
\end{equation*}
 is an isomorphism $\mathbb{C}_{0}^{2}\setminus \{\lambda_{0}= 0\}\longrightarrow V\setminus \{x= 0\}$. It maps the whole $\mu_{0}-$axis to the origin. The inverse  when $x\neq 0$ is 
 \begin{equation}\label{equ morphism to the affine variety 0} \lambda_{0}=x^{2},\ \ \mu_{0}=\frac{y}{x}. 
 \end{equation}
 
\item  The map 
\begin{equation*}
u_{0}=\lambda^{2}_{1}\mu_{1},\ u_{1}=\lambda_{1}\mu_{1},\ u_{2}=\mu_{1}. 
\end{equation*}
 is an isomorphism $\mathbb{C}_{0}^{2}\setminus \{\mu_{1}= 0\}\longrightarrow V\setminus \{y= 0\}$.  It maps the whole $\lambda_{1}-$axis to the origin. 
 The inverse when $y\neq 0$ is 
 \begin{equation} \label{equ morphism to the affine variety 1}
 \lambda_{1}=\frac{x}{y},\ \ \mu_{1}=y^{2}.
 \end{equation}

\end{itemize}

The $\mu_{0}-$axis in $\mathbb{C}_{0}^{2}$ 
is glued to the $\lambda_{1} -$axis in  $\mathbb{C}_{1}^{2}$ along the non-zero part via the transition map
$\lambda_{1}=\frac{1}{\mu_{0}}$, in conjunction with   \eqref{equ gluing map C2}. This is  the exceptional (smooth) $\mathbb{P}^{1}$ which is  a  $(-2)-$curve. \begin{figure}[h]\begin{center}  \begin{tikzpicture}
\node at (-4.6,1) {$\lambda_{0}$};
\node at (-4,-0.4) {$\mu_{0}$};
  \draw[->,semithick] (-6,0) -- (-4,0);
    \draw[->,semithick] (-5,-1) -- (-5,1);
    \node at (1.4,1) {$\lambda_{1}$};
\node at (2,-0.4) {$\mu_{1}$};
  \draw[->,semithick] (0,0) -- (2,0);
    \draw[->,semithick] (1,-1) -- (1,1);
    \draw [bend left, dashed,-]
(-4.8,0) -- (1,0.5);
\end{tikzpicture}  
\end{center}
\caption{Manifold structure for crepant resolution of $A_{1}-$singularity}\label{picture A1}
\end{figure}
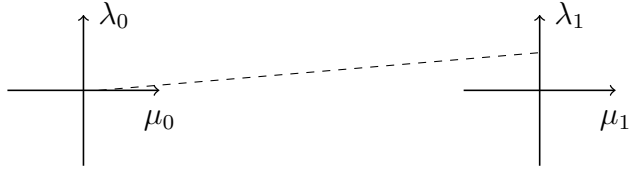
The isomorphism from the resolution $\widetilde{\frac{\mathbb{C}^{2}}{\langle\sigma \rangle}}=\mathbb{C}_{0}^{2}\underline{\cup}\mathbb{C}_{1}^{2}$ to the total space $O(-2)\rightarrow \mathbb{P}^{1}$ is : 
$$ \left\{\begin{array}{c}(\lambda_{0}, \mu_{0})\rightarrow \lambda_{0}(1,\mu_{0})^{\otimes 2}\\
(\lambda_{1}, \mu_{1})\rightarrow \mu_{1}(\lambda_{1}, 1)^{\otimes 2}.
\end{array}\right.$$
 We define the $0$ in $\lambda_{1}-$axis in $\mathbb{C}_{1}^{2}$ as the ``$0$" in the exceptional divisor, and the $0$ in $\mu_{0}-$axis as the ``$\infty$".  See \cite{Reid} for more explicit information on Du Val singularities. 

\subsection{Nikulin poly-cylinder}
At a fixed point $p$ of the Nikulin involution on a $K3$, we call a coordinate neighborhood  $\{U_{p}, (y, v)\}$ a \textit{Nikulin coordinate} at $p$, if $p$ maps to $(0,0)$ in the chart, and   the Nikulin-involution $\sigma$ is diagonal therein: 
\begin{equation}\sigma\cdot (y,v)= (y,v)\cdot \left| \begin{array}{cc}-1& \\
& 1 \\
\end{array}\right|.\end{equation}
 Such a coordinate always exists \cite{Nikulin}.  On the other hand, the fixed points of $\sigma$ form a disjoint union of  holomorphic curves. Let $\mathcal{C}_{i}$ denote a component of the fixed locus $\mathcal{C}$ of $\sigma$ i.e. a (connected) curve  point-wisely fixed by $\sigma$. In a  Nikulin coordinate at a $p\in \mathcal{C}$, the curve  is defined by $y=0$. Consider a poly-cylinder in $y$ and $v$ of equal radius (contained in the coordinate chart) with closure in a Nikulin coordinate neighborhood:
   $$PC_{p}(r)\triangleq D_{y}(r)\times D_{v}(r),$$
   where $D_{y}(r)$ is open disk of radius $r$ in the complex plane with coordinate $y$ centered at $y=0$, and $D_{v}(r)$ is likewise.  We call it a \textit{Nikulin poly-cylinder}. The transition function between two Nikulin poly-cylinders at a point $p$ has a Taylor expansion near the origin (that corresponds to $p$):
\begin{equation}
\widehat{y}=a_{1}(v)y+a_{3}(v)y^{3}+a_{5}(v)y^{5}...
\end{equation}
 The reason why only odd degree terms in $y$ appear is the invariance
$$ \widehat{y}(-y,v)= -\widehat{y}(y,v).$$
 Non-vanishing of Jacobian of the transition function says $$a_{1}(v)=\lim_{y\rightarrow 0}\frac{\widehat{y}}{y}(y,v)=\frac{\partial \widehat{y}}{\partial y}(0,v)\neq 0$$ 
for any $v$ on the overlap on  $\mathcal{C}_{i}$. It is the transition function for the the co-normal bundle  $N^{\star}_{\mathcal{C}_{i}}$ of $\mathcal{C}_{i}$. 
Let $Exc$ denote the dis-joint union $\cup_{i}Exc_{i}$,  where  $Exc_{i}\triangleq \pi^{-1}\mathcal{C}_{i}$.  
\subsection{Resolution along fixed curves and Nikulin-Reid poly-cylinders}
  We simply do   product resolution which is independent of coordinate. $x$ still parametrizes $\mathbb{C}$. Abusing terms, we also call 
\begin{equation}\label{equ triple Polycyl equal radius} D_{x}(r)\times D_{y}(r) \times D_{v}(r)\end{equation}
and the quotient $$\widetilde{\frac{D_{x}(r)\times D_{y}(r)}{\langle \sigma \rangle}}\times D_{v}(r)\triangleq PC_{resol}$$
 Nikulin  poly-cylinders.  According to the inverse resolution morphisms \eqref{equ morphism to the affine variety 0}, \eqref{equ morphism to the affine variety 1} and the transition,  
 corresponding to the manifold structure of crepant resolution of $\frac{\mathbb{C}^{2}}{\{\pm Id\}}$, we have the decomposition
 \begin{equation}\nonumber PC_{resol}
 = PC_{resol,0}\cup PC_{resol,1}.\label{equ Nikulin Reid PC} \end{equation} 
where  $PC_{resol,i}$ are parametrized by coordinates $(\lambda_{i},\mu_{i})$ of $\mathbb{C}_{0}^{2}$ and $\mathbb{C}_{1}^{2}$.  They are called  \textit{Nikulin-Reid poly-cylinders}. Unlike \eqref{equ triple Polycyl equal radius},  we do not know whether they are of equal radius. 
 
  Let $(x,y,v)$ and $(x,\widehat{y},\widehat{v})$
 be two Nikulin  poly-cylinders. On the overlap, the transition map lifts to the  resolution because $\frac{\widehat{y}}{y}$ extends holomorphically in $y^{2}=\lambda_{0}\mu^{2}_{0}=\mu_{1}$ and $v$.  Moreover,  $$\widehat{\lambda}_{0}=\lambda_{0}=x^{2},\ \ \ \ \widehat{\mu}_{0}=\mu_{0}\cdot \frac{\widehat{y}}{y}, $$
$$\widehat{\lambda}_{1}=\lambda_{1}(\frac{\widehat{y}}{y})^{-1},\ \ \ \ \widehat{\mu}_{1}=\mu_{1}\cdot (\frac{\widehat{y}}{y})^{2}. $$

When $y^{2}=0$, the transition  between Nikulin-Reid poly-cylinders becomes the fiberwise linear map:
 $$\widehat{\lambda}_{0}=\lambda_{0},\ \ \ \ \widehat{\mu}_{0}=\mu_{0}\cdot a_{1},\ \ \ \widehat{\lambda}_{1}=\frac{\lambda_{1}}{a_{1}(v)},\ \ \ \ \widehat{\mu}_{1}=\mu_{1}\cdot a^{2}_{1}.$$
 Therefore, the map $$\lambda_{1}\longrightarrow [1\oplus \lambda_{1}dy],\ \ \ \mu_{0}\longrightarrow [\mu_{0}\oplus dy]$$
is  independent of coordinate, and yields an isomorphism 
$$Exc_{i}\longrightarrow \mathbb{P}[O_{\mathcal{C}_{i}}\oplus N^{\star}_{\mathcal{C}_{i}}]=\mathbb{P}[N_{\mathcal{C}_{i}}\oplus O_{\mathcal{C}_{i}}]=\mathbb{P}[\textrm{Normal Bundle of}\ \mathcal{C}_{i}].$$
Adjunction says   $$N_{\mathcal{C}_{i}}=[\mathcal{C}_{i}]|_{\mathcal{C}_{i}}=K_{\mathcal{C}_{i}},\  \ \ N^{\star}_{\mathcal{C}_{i}}=K^{\star}_{\mathcal{C}_{i}}=T^{1,0}_{\mathcal{C}_{i}}\ (\textrm{holomorphic tangent bundle of}\ \mathcal{C}_{i}).$$

\subsection{Projectivity \label{sect Projectivity}}

  Let $Bl^{O\times Fix(\sigma)}(\mathbb{P}^{1}\times K3)$ denote the blow up of $\mathbb{P}^{1}\times K3$ along the curve  $O\times Fix(\sigma)$. %and $\{\infty\}\times Fix(\sigma)$.
   It is still a smooth projective variety. See  \cite[Remark 1, 6.2.2]{Shafarevich}, \cite[II. Proposition 7.16]{Hartshorne} for related discussions.  Then there is a positive line bundle $L_{0}\rightarrow Bl^{O\times Fix(\sigma)}(\mathbb{P}^{1}\times K3).$ By the very ampleness discussions in \cite[II Propsition 7.10 (b), Remark 5.16.1]{Hartshorne},  and that the invertible sheaf ``$O_{P}(1)$" in \cite[II Propsition 7.10 (b)]{Hartshorne} is of the divisor $-Exc$ (see \cite[4]{Vakil}), 
  we can simply take 
  \begin{equation}\label{equ ample line bundle on the compactification}L=[-Exc]\otimes m\pi^{\star}O_{\mathbb{P}^{1}}(1)\otimes m\pi^{\star}L_{K3}\end{equation} for sufficiently large positive $m$ regarding a $\sigma-$invariant  ample/positive line bundle $L_{K3}$ on the $K3$.   The quotient 
  \begin{equation}
  \frac{Bl^{O\times Fix(\sigma)}(\mathbb{P}^{1}\times K3)}{\langle \pm Id \rangle}\triangleq \overline{X}
  \end{equation}
is a complex orbifold, and  is also  a  projective variety  by Kodaira-Baily embedding theorem \cite{Baily}.  %Therefore, the Fubini-Study metric $(g_{FS},\omega_{FS})$ restricts onto the quasi-projective variety 
By  coordinates for blow ups of a smooth complex manifold  \cite[4.6]{GH},  and of the orbifold with $A_{1}-$singularities in Section \ref{subsect Basics of $A_{n-1}-$singularity and the crepant resolution},  it is routine to check
  \begin{equation}\label{equ blow up and my ALG mfld}
  \frac{Bl^{O\times Fix(\sigma)}(\mathbb{C}\times K3)}{\langle \pm Id \rangle}=\widetilde{\frac{\mathbb{C} \times K3}{\langle \pm Id \rangle}}=X. 
  \end{equation} i.e. the crepant resolution of the $Z_{2}-$quotient is isomorphic to the $Z_{2}-$quotient of the blow up. Namely, similar to \eqref{equ morphism to the affine variety 0} its subsequent material, let $x,\ y,\ v$ be Nikulin coordinate near a point on $O\times Fix(\sigma)$, we have 
the following coordinates $z_{i},w_{i}$ near the exceptional divisor of the blow up of $\mathbb{P}^{1}\times K3$:
$$z_{0}=x,\ w_{0}=\frac{y}{x}\ \textrm{when}\ x\neq 0,\ \textrm{and} \ z_{1}=\frac{x}{y},\ w_{1}=y,\ \textrm{when}\ y\neq 0.$$
Moreover, we have the transition on the overlap of Nikulin- coordinates:
\begin{equation}\label{equ coordinate usual blow up 0} \widehat{z}_{0}=z_{0}=x,\ \ \ \ \widehat{w}_{0}=w_{0}\cdot \frac{\widehat{y}}{y}\ ; \ \widehat{z}_{1}=z_{1}(\frac{\widehat{y}}{y})^{-1},\ \  \widehat{w}_{1}=w_{1}\cdot \frac{\widehat{y}}{y}\ . \end{equation}
   The action of $\sigma$ on $\mathbb{P}^{1}\times K3$, locally near a point on $O\times Fix(\sigma)$ given by 
   $$\sigma (x,y,v)=(-x,-y,v),$$
obviously lift to the blow-up i.e.   
\begin{equation}\label{equ lift} \sigma (z_{0},w_{0},v)=(-z_{0},w_{0},v),\ \sigma (z_{1},w_{1},v)=(z_{1},-w_{1},v),  \end{equation}
and is independent of the coordinate chosen, as when we replace $x,\ y,\ \widehat{x},\ \widehat{y}$ by $-1$ of themselves in \eqref{equ coordinate usual blow up 0} and  \eqref{equ lift}, the negative signs cancel out. The morphism 
\begin{equation}\label{equ covering orbifold} Bl^{O\times Fix(\sigma)}(\mathbb{P}^{1}\times K3):\longrightarrow \widetilde{\frac{\mathbb{P}^{1} \times K3}{\langle \pm Id \rangle}},
\end{equation}
locally near the exceptional divisor, is defined by 
$$\lambda_{0}=z^{2}_{0},\ \mu_{0}=w_{0},\ \lambda_{1}=z_{1},\ \mu_{1}=w^{2}_{1}.$$
\begin{rmk}\label{rmk conic Borbon Spotti} Fu-Namikawa \cite{FN}  showed   existence of lifting for   surface singularities.  By $d\lambda_{0}=2z_{0}dz_{0}$, we find  $d\lambda_{0}d\overline{\lambda}_{0}=4|z_{0}|^{2}dz_{0} d\overline{z}_{0}$. Since $\lambda_{0}$ is the defining holomorphic/regular function of the exceptional divisor in Nikulin coordinate,  the pullback of the ALG Ricci flat metric from $X$ to  $Bl^{O\times Fix(\sigma)}(\mathbb{C}\times K3)$ is still a Ricci flat K\"ahler metric with an ALG end and conic singularity along the exceptional divisor of angle $4\pi$. 
\end{rmk}

The Bergman metric   is written as 
\begin{equation}\label{equ Bergman}\omega_{Bergman}=i\partial\overline{\partial}\log (|s_{0}|^{2}+...+|s_{n}|^{2}),\end{equation}
where $s_{i}\in H^{0}[\overline{X}, L]$. 
 The  biregular map $\Phi$ produced by Baily \cite[Global imbedding theorem]{Baily} locally generates the whole stalk of invariant holomorphic functions \cite[below local imbedding theorem]{Baily}. Restricted to $X$, it is a genuine complex manifold embedding.

\section{Topology and table of  examples \label{sect Topology}}
\subsection{Fundamental group}

The global quotient $\frac{\mathbb{C}\times Y}{\langle \sigma \rangle}$, denoted by $Orb$,  is normal. The result of   Kollar \cite[Theorem 7.8]{Kollar} says the induced homorphism $$p_{\star}:\ \pi_{1}[X]\rightarrow \pi_{1}[Orb]$$
between the fundamental groups is an isomorphism. The radial retraction of the $\mathbb{C}-$component to the origin is invariant under the group action, Thus it induces an isomorphism 

$$\pi_{1}[X]\rightarrow \pi_{1}[Y/\langle \sigma \rangle].$$

\begin{lem}\label{lem surj fundamental group}Let $Y$ be a smooth compact path connected  manifold. Let $\sigma$ be  diffeomorphism  of $Y$ with prime order. Suppose $\sigma$ has at least one fixed point $x_{0}$. Then the induced homorphism $\pi_{1}(Y)\longrightarrow  \pi_{1}(\frac{Y}{\langle \sigma\rangle})$ is surjective. 
\end{lem}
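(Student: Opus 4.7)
The plan is to establish surjectivity by a direct path-lifting argument anchored at the fixed point $x_0$. Let $G=\langle\sigma\rangle$ and let $q:Y\to Y/G$ denote the quotient map. Given a loop $\tilde\gamma:[0,1]\to Y/G$ based at $[x_0]$, suppose we can produce a continuous lift $\gamma:[0,1]\to Y$ with $\gamma(0)=x_0$ and $q\circ\gamma=\tilde\gamma$. Then the endpoint $\gamma(1)$ lies in $q^{-1}([x_0])=G\cdot x_0$; but because $x_0$ is fixed by $\sigma$ and hence by every element of $G$, the orbit $G\cdot x_0$ collapses to $\{x_0\}$, so $\gamma(1)=x_0$. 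Thus $\gamma$ is itself a loop at $x_0$ with $q_*[\gamma]=[\tilde\gamma]$, giving surjectivity.

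The real content is therefore the path-lifting statement: every path $\tilde\alpha:[0,1]\to Y/G$ starting at $[x_0]$ lifts to a path in $Y$ starting at $x_0$. My plan is to prove this by a local-to-global argument built on the slice theorem for smooth finite group actions. At each $y\in Y$, the slice theorem (equivalently Bochner linearization) provides a $G_y$-invariant open neighborhood $U_y$ of $y$ on which $G_y$ acts linearly via its representation on $T_yY$, and such that $g\cdot U_y\cap U_y=\emptyset$ for $g\notin G_y$. Consequently $q|_{U_y}\colon U_y\to U_y/G_y$ identifies a neighborhood of $[y]$ in $Y/G$ with a linear orbifold chart. In such a chart, paths beginning at $[y]$ lift to paths beginning at $y$ by exploiting the cone structure of the linear quotient $U_y/G_y$ (for instance, lift radial segments verbatim, and handle general paths by decomposing into radial and angular components on $U_y/G_y\setminus\{[y]\}$). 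Covering $\tilde\alpha([0,1])$ by finitely many such charts via compactness, one builds the global lift inductively interval-by-interval, using the $G$-action to reconcile the endpoint of each partial lift with the starting preimage demanded by the next local chart.

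The principal obstacle is precisely this path-lifting at ramification points, where $q$ fails to be a genuine covering; on the free locus $Y\setminus\mathrm{Fix}(\sigma)$ the classical lifting theory for covering spaces applies without modification. I should note that the primality hypothesis on the order of $\sigma$ plays no essential role in the argument: the proof uses only that $G$ is finite and that $x_0$ is fixed by every element of $G$, which is automatic once $x_0$ is fixed by a generator.
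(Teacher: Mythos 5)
Your reduction of surjectivity to path lifting anchored at $x_{0}$ is exactly the paper's: a lift starting at $x_{0}$ must end in $q^{-1}([x_{0}])=\{x_{0}\}$ because the fixed point is its own orbit, so the lifted path closes up automatically. The gap is in the path-lifting step itself, which is the entire content of the lemma. Passing to a slice-theorem linear chart $U_{y}/G_{y}$ does not by itself produce lifts: inside such a chart the difficulty is unchanged, and the proposed ``radial and angular decomposition'' does not handle a general path. The angular component is a path in the quotient of the unit sphere of the slice by $G_{y}$, on which $G_{y}$ again acts non-freely in general, so the lifting problem simply recurs in one dimension lower; worse, the angular component is undefined at every time the path sits over the singular stratum, and the set of such times can be an arbitrary closed subset of $[0,1]$ (e.g.\ a Cantor set), so one cannot reduce to finitely many radial segments and arcs. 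What is missing is a mechanism for matching the lifts chosen on the (possibly infinitely many) complementary open intervals with a lift over the singular times.

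This is precisely where the paper uses primality, which you dismiss as inessential. For $\sigma$ of prime order every point is either free or fixed by all of $\langle \sigma\rangle$, so the ramification locus equals $\mathrm{Fix}(\sigma)$ and the quotient map is \emph{injective} over it. The paper writes $[0,1]\setminus S=\bigcup_{k}(a_{k},b_{k})$, where $S$ is the closed set of times the loop spends over $\rho[\mathrm{Fix}(\sigma)]$, lifts canonically over $S$ (unique preimages), lifts over each $(a_{k},b_{k})$ by covering-space theory on the free locus, and then checks continuity at the interface by compactness of $Y$: any subsequential limit of the lift as $t\to a_{k}^{+}$ projects to $\gamma(a_{k})$, which has a unique preimage, so the limit exists and matches the canonical lift. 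Without primality, points with intermediate stabilizer have several preimages and this matching argument breaks. The lemma is indeed still true in that generality --- it follows from the path-lifting theorem for compact transformation groups (Bredon, \emph{Introduction to Compact Transformation Groups}, Ch.~II) together with your endpoint observation --- but neither your sketch nor the paper's argument proves that general case. To repair your write-up, either cite that theorem outright, or reinstate primality and run the unique-preimage matching argument over the ramification set as the paper does.
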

Because $Y$ is simply connected and compact,  Lemma \ref{lem surj fundamental group} implies both the orbifold  and   the resolution $X$ are simply-connected. 
\begin{proof}[Proof of Lemma \ref{lem surj fundamental group}:] The prime order condition implies fixed locus of a non-zero power $\sigma^{k}$ is the same as fixed locus of $\sigma$. This is straight-forward: there are integers $a,b$ such that 
$ak-bp=1$, therefore

$$\sigma^{k}(x)=x\ \textrm{if and only if}\ \sigma(x)=\sigma^{ak-bp}(x)=x.$$

Thus 
\begin{equation}\label{equ fix pt sigma group}Fix(\sigma)=Fix(\langle \sigma \rangle)\triangleq \cup_{1\leq k \leq ord_{\sigma}-1}Fix (\sigma^{k})= Fix(\sigma^{j})\ \textrm{for any}\ 1\leq j\leq ord_{\sigma}-1.
\end{equation}Henceforth we denote it by the first. 

The observation is that a loop $$\gamma:\ [0,1]\longrightarrow \frac{Y}{\langle \sigma \rangle}\ \textrm{based at the image}\   [x_{0}]\in \frac{Y}{\langle \sigma \rangle} $$ of the fixed point  can be lifted to a loop in $Y$ based at $x_{0}$. Namely, $x_{0}$ is the only pre-image of $[x_{0}]\in \frac{Y}{\langle \sigma \rangle} $ under the quotient $\rho:\ Y\longrightarrow  \frac{Y}{\langle \sigma \rangle} $. As  $\langle \sigma \rangle$ is cyclic and finite, the restricted map away from the fixed locus: $Y-Fix\sigma \longrightarrow  \frac{Y-Fix(\sigma)}{\langle \sigma \rangle} $ is a covering. There is a closed subset $S\subset [0,1]$ such that 
$\gamma |_{S}$ lies on $\rho[Fix(\sigma)]$,
and  $\gamma |_{[0,1]\setminus S}$ lies on $\frac{Y-Fix( \sigma )}{\langle \sigma \rangle}$.  Any open (closed) subset in $[0,1]\subset \mathbb{R}$ is a countable union of bounded open (closed) intervals in $[0,1]$. Since $\gamma(0)=\gamma(1)=x_{0}\in S$, we   denote 
$$[0,1]\setminus S=\cup_{k=1}^{\infty}(a_{k},b_{k}).$$
By \eqref{equ fix pt sigma group}, $\gamma |_{S}$ automatically lifts by the identity homeomorphism
$$\pi: Fix(\sigma)\longrightarrow Fix(\langle \sigma \rangle)$$
to $Y$.  For each $k$, denote a lift (by property of covering space) of $$\gamma:\ (a_{k},b_{k})\longrightarrow \frac{Y}{\langle \sigma \rangle}$$ %a piece that lies outside the fixed locus, starting from a point that maps to the mid point $\gamma:\ \frac{a_{k}+b_{k}}{2}$, 
by $\gamma_{Y}(a_{k},b_{k})$. The point is to show it ``matches" the trivial lift 
$\gamma |_{S}$ at the end points.  Equip $Y$ with a $\sigma-$invariant Riemannian metric, by the average $\frac{1}{p}\Sigma_{i=0}^{p-1}(\sigma^{i})^{\star} g_{0}$, where $g_{0}$ is a smooth metric on $Y$. Because $Y$ is compact and $\sigma$ is continuous, for any sequence  $t_{j}\rightarrow a^{+}_{k}$,  $\gamma_{Y}(t_{j})$ sub-converges to point $x_{k,left}$
whose image in $\frac{Y}{\langle \sigma \rangle}$ is $\gamma_{a_{k}}$. Thus $x_{k,left}=\gamma_{a_{k}}$ as fixed point in $Y$.  Similarly, for any sequence  $t_{j}\rightarrow b^{-}_{k}$,  $\gamma_{Y}(t_{j})$ sub-converges to point $x_{k,right}$
whose image in $\frac{Y}{\langle \sigma \rangle}$ is $ \gamma_{b_{k}}$. Thus 
$x_{k,right}= \gamma_{b_{k}}$ as fixed point in $Y$. Hence the limits exist and is correct. This means   the lift $\gamma_{Y}(t)$ is continuous at the end points.  \end{proof}

\subsection{Betti numbers}
\begin{prop}\label{prop Betti numbers} As in Corollary \ref{Cor 2},  let $(K3,\sigma)$ be a $K3-$surface with a non-symplectic Nikulin involution $\sigma$. 
Then the crepant resolution $X\triangleq \widetilde{\frac{\mathbb{C}\times K3}{\langle \sigma \rangle}}$ (which is simply-connected) has Betti number
\begin{equation}\label{equ Betti Exc} b^{\star}(X)=b^{\star}(\frac{\mathbb{C}\times K3}{\langle \sigma \rangle})+b^{\star}(Exc)-b^{\star}(\mathcal{C}). \end{equation}
Consequently, 
  \begin{eqnarray}
& &  b^{2}=rkNS(\sigma)+\textrm{Number of all irreducible fixed curves},  \label{equ b2}
\\& & b^{3}=2\Sigma_{\mathcal{C}_{i}\ \textrm{is an irreducible fixed curve}}g_{\mathcal{C}_{i}},\label{equ b3}
\\& & b^{4}=1+\textrm{Number of all irreducible fixed curves}.  \label{equ b4}
\end{eqnarray}
Moreover, $b^{i}=0$ when $i\geq 5$. 
\end{prop}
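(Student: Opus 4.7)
The plan is to exploit the crepant resolution morphism $\pi:X\to orb$, where $orb=(\mathbb{C}\times K3)/\langle\sigma\rangle$, via the Leray spectral sequence. Since $\pi$ is an isomorphism away from the singular locus $\{0\}\times\mathcal{C}$ and has $\mathbb{P}^{1}$-fibres over it (the fibres of $Exc$), one computes $R^{0}\pi_{*}\mathbb{Q}=\mathbb{Q}_{orb}$, $R^{2}\pi_{*}\mathbb{Q}=\mathbb{Q}_{\mathcal{C}}$ (skyscraper on $\{0\}\times\mathcal{C}$), and all other higher direct images vanish. Hence the $E_{2}$-page of the Leray SS is concentrated in rows $q=0,2$, so every differential $d_{r}$ ($r\geq 2$) lands in a zero row and the sequence degenerates at $E_{2}$. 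This yields $b^{k}(X)=b^{k}(orb)+b^{k-2}(\mathcal{C})$. Combining with the Leray--Hirsch identity $b^{k}(Exc)=b^{k}(\mathcal{C})+b^{k-2}(\mathcal{C})$ for the $\mathbb{P}^{1}$-bundle $Exc\to\mathcal{C}$ (identified in Section \ref{sect orbifold metric}) gives \eqref{equ Betti Exc}.

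Next I compute $b^{*}(orb)$. The radial retraction in the $\mathbb{C}$-factor is $\sigma$-equivariant, so $orb$ deformation-retracts onto $K3/\langle\sigma\rangle$, and the standard transfer argument gives $H^{*}(K3/\langle\sigma\rangle,\mathbb{Q})=H^{*}(K3,\mathbb{Q})^{\sigma^{*}}$. Non-symplecticity $\sigma^{*}\Omega_{K3}=-\Omega_{K3}$ forces $\sigma^{*}$ to act as $-1$ on $H^{2,0}\oplus H^{0,2}$, so $\sigma$-invariants in $H^{2}(K3,\mathbb{Q})$ lie in $H^{1,1}(K3,\mathbb{R})$; as rational $(1,1)$-classes they are algebraic by the Lefschetz $(1,1)$-theorem, so $H^{2}(K3,\mathbb{Q})^{\sigma^{*}}=NS(K3)^{\sigma}\otimes\mathbb{Q}$, of rank $\mathrm{rk}\,NS(\sigma)$. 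The remaining Betti numbers are immediate: $b^{0}(orb)=b^{4}(orb)=1$ and $b^{1}(orb)=b^{3}(orb)=0$ since $H^{\mathrm{odd}}(K3)=0$.

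Finally, I plug in $\mathcal{C}=\sqcup_{i=1}^{n}\mathcal{C}_{i}$ with $b^{0}(\mathcal{C})=b^{2}(\mathcal{C})=n$ and $b^{1}(\mathcal{C})=2\sum g_{\mathcal{C}_{i}}$, together with $b^{k}(Exc)=b^{k}(\mathcal{C})+b^{k-2}(\mathcal{C})$, into \eqref{equ Betti Exc}, obtaining $b^{2}(X)=\mathrm{rk}\,NS(\sigma)+n$, $b^{3}(X)=2\sum g_{\mathcal{C}_{i}}$, $b^{4}(X)=1+n$, and $b^{i}(X)=0$ for $i\geq 5$ since all three pieces vanish in those degrees. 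The substantive step is the Hodge-theoretic identification $H^{2}(K3,\mathbb{Q})^{\sigma^{*}}\cong NS^{\sigma}(K3)\otimes\mathbb{Q}$, which uses both non-symplecticity and Lefschetz $(1,1)$; the degeneration of the Leray SS is a formal parity argument. The simple-connectedness assertion is already recorded just above the proposition, via Koll\'{a}r's theorem and Lemma \ref{lem surj fundamental group}.
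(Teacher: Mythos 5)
Your route is genuinely different from the paper's: the paper never runs a Leray spectral sequence on $\pi:X\to orb$. Instead it works upstairs on the smooth blow-up $Bl_{\mathcal{C}}(\mathbb{C}\times K3)$, where the classical blow-up/Mayer--Vietoris formula gives $H^{i}[Bl_{\mathcal{C}}(\mathbb{C}\times K3)]/H^{i}[\mathbb{C}\times K3]\cong H^{i}[\widetilde{N}_{\mathcal{C}}]/H^{i}[N_{\mathcal{C}}]$, and then descends by taking $\sigma$-invariant subspaces and invoking the Satake isomorphism (using the identification $X=Bl_{\mathcal{C}}(\mathbb{C}\times K3)/\langle\pm\mathrm{Id}\rangle$ from Section \ref{sect Projectivity}). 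Your computations of $b^{\star}(orb)$ via the transfer/retraction and of $H^{2}(K3,\mathbb{Q})^{\sigma^{\star}}\cong NS^{\sigma}\otimes\mathbb{Q}$ via non-symplecticity plus Lefschetz $(1,1)$, and of $b^{\star}(Exc)$ via Leray--Hirsch, agree with the paper (which cites \cite{AST} for the lattice statement).

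However, there is a genuine gap in your degeneration argument. With $E_{2}^{p,q}=H^{p}(orb,R^{q}\pi_{*}\mathbb{Q})$ concentrated in rows $q=0$ and $q=2$, it is true that $d_{2}$ vanishes, but $d_{3}:E_{3}^{p,2}\to E_{3}^{p+3,0}$ maps row $2$ to row $0$, which is \emph{not} a zero row; your parity claim fails here. Concretely, the only dangerous differential is $d_{3}:E_{3}^{1,2}=H^{1}(\mathcal{C},\mathbb{Q})\to E_{3}^{4,0}=H^{4}(orb,\mathbb{Q})\cong\mathbb{Q}$, and if it were nonzero it would lower both $b^{3}(X)$ and $b^{4}(X)$ by one, contradicting \eqref{equ b3}--\eqref{equ b4}. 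The conclusion is rescuable, but you must supply an argument: for instance, the edge map $\pi^{\star}:H^{4}(orb)\to H^{4}(X)$ is injective because its further restriction to a generic fibre $K3\cong\pi^{-1}(\rho_{1}^{-1}[z])$, $z\neq 0$, is pullback along the degree-two map $K3\to K3/\langle\sigma\rangle$, which is nonzero on $H^{4}$; hence $E_{\infty}^{4,0}=E_{2}^{4,0}$ and all differentials into that spot, in particular $d_{3}|_{E_{3}^{1,2}}$, vanish. (A weight argument --- $E_{2}^{1,2}\cong H^{1}(\mathcal{C})(-1)$ has weight $3$ while $H^{4}(orb)$ has weight $4$ --- or the decomposition theorem for the semismall map $\pi$ would also do.) With such a supplement your proof is complete; as written, the degeneration step would not survive scrutiny.
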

\begin{proof} By the De Rham cohomology version of the double Mayer-Vietoris argument \cite[4.6 blow up sub-manifolds]{GH},  \cite[I.2]{BT} (also see \cite[4.1]{DL} for the cohomology ring of blow up),    we find the following $\sigma-$invariant isomorphism
\begin{equation}\frac{H^{i}[Bl_{\mathcal{C}}(\mathbb{C}\times K3)]}{H^{i}[\mathbb{C}\times K3]}= \frac{ H^{i}[\widetilde{N}_{\mathcal{C}}]}{H^{i}[N_{\mathcal{C}}]},
\label{equ coh of resol}
\end{equation}
where $N_{\mathcal{C}}$ is a $\sigma-$invariant tubular neighborhood of the fixed curve $\mathcal{C}$,   and $\widetilde{N}_{\mathcal{C}}$ is the blow up of  $N_{\mathcal{C}}$.  The injections  
$$H^{i}[\mathbb{C}\times K3]\longrightarrow H^{i}[Bl_{\mathcal{C}}(\mathbb{C}\times K3)],\ \   H^{i}[N_{\mathcal{C}}]\longrightarrow H^{i}[\widetilde{N}_{\mathcal{C}}]$$
are simply given by  pullback of a closed $i-$forms under the smooth map. Therefore,  they are  $\sigma-$invariant.  Denoting the $\sigma-$invariant sub-spaces of the involved cohomologies by $H_{\sigma}^{i}$,  the $\sigma-$invariant sub-spaces of the quotient spaces  \eqref{equ coh of resol} are also isomorphic:
\begin{equation}\frac{H_{\sigma}^{i}[Bl_{\mathcal{C}}(\mathbb{C}\times K3)]}{H_{\sigma}^{i}[\mathbb{C}\times K3]}= \frac{ H_{\sigma}^{i}[\widetilde{N}_{\mathcal{C}}]}{H_{\sigma}^{i}[N_{\mathcal{C}}]}.
\label{equ coh of resol 0}
\end{equation}
Satake isomorphism \cite{Satake,ALR} implies  $\sigma-$invariant  subspace of the cohomology is isomorphic to the singular cohomology of the quotient orbifold i.e.  
\begin{equation}\frac{H^{i}[X]}{H^{i}[\frac{\mathbb{C}\times K3}{\langle \sigma \rangle}]}= \frac{H^{i}[\frac{\widetilde{N}_{\mathcal{C}}}{\langle \sigma \rangle}]}{H^{i}[\frac{N_{\mathcal{C}}}{\langle \sigma \rangle}]}. 
\label{equ coh of resol 1}
\end{equation}

 %De Rham isomorphism says \eqref{equ coh of resol} is also true for De Rham cohomology,  with injection $$H^{i}[\mathcal{C}]\longrightarrow H^{i}[Exc]$$ being  simply the  pullback a (De Rham) cohomology class on $\mathcal{C}$ to the $\mathbb{P}^{1}$ fibration $Exc$ \cite{BT},  which is $\sigma$ invariant.  

% Because $\sigma$ is lifted to the blow-up,  and  $Exc$ and $\mathcal{C}$ are point-wisely fixed,   it acts on both sides of  \eqref{equ coh of resol} and trivially on $\frac{ H^{i}[Exc]}{H^{i}[\mathcal{C}]}$.  Denote the isomorphism \eqref{equ coh of resol} by $J$.  Then $$\sigma J \sigma^{-1}+J$$
%is a $\sigma-$invariant isomorphism for  \eqref{equ coh of resol}.  
 %On the $Z_{2}-$quotient $X$,   Satake isomorphism says
 %$$H_{DR_{\sigma}}^{\star}(\frac{\mathbb{C}\times K3}{\langle \sigma \rangle})=H_{Sing}^{\star}(\frac{\mathbb{C}\times K3}{\langle \sigma \rangle})\ \ (\textrm{see}\ \cite{Satake,Clader,ALR}).$$ 

%De Rham cohomologies of $H^{i}[X]$ and $H^{i}[\frac{\mathbb{C}\times K3}{\langle \sigma \rangle}]$ are  $\sigma-$invariant parts of $H^{i}[Bl_{\mathcal{C}}(\mathbb{C}\times K3)]$ and $H^{i}[\mathbb{C}\times K3]$, respectively.    Taking $\sigma-$invariant part of \eqref{equ coh of resol},  we find 
%\begin{equation}H^{i}[X]=H^{i}[\frac{\mathbb{C}\times K3}{\langle \sigma \rangle}]\oplus \frac{ H^{i}[Exc]}{H^{i}[\mathcal{C}]}. 
%\label{equ coh of resol inv}
%\end{equation}

The neighborhood $\frac{N_{\mathcal{C}}}{\langle \sigma \rangle}$ retracts to $\mathcal{C}$ and  $\frac{\widetilde{N}_{\mathcal{C}}}{\langle \sigma \rangle}$ retracts to $Exc$ because $\sigma$ is lifted to the blow-up.  Therefore 
\begin{equation}\frac{H^{i}[X]}{H^{i}[\frac{\mathbb{C}\times K3}{\langle \sigma \rangle}]}= \frac{H^{i}[Exc]}{H^{i}[\mathcal{C}]}.  
\label{equ coh of resol 2}
\end{equation}

%It suffices to calculate $H^{\star}[\frac{\mathbb{C}\times K3}{\langle \sigma \rangle}]$ and $H^{\star}(Exc)$. 
\begin{itemize}\item For $H^{\star}[\frac{\mathbb{C}\times K3}{\langle \sigma \rangle}]$, the obvious radial retraction of $\mathbb{C}-$component says 
$$H^{i}[\frac{\mathbb{C}\times K3}{\langle \sigma \rangle}]=H^{i}[\frac{K3}{\langle \sigma \rangle}]. $$
$H^{2}[\frac{K3}{\langle \sigma \rangle}]$ is simply the rank of $\sigma-$invariant N\'eron-Severi lattice \cite[Theorem 2.1]{AST} i.e. an integer cohomology class 
is $\sigma-$invariant only if it is in N\'eron-Severi lattice. Satake isomorphism again says  $H^{0}[\frac{K3}{\langle \sigma \rangle}]$ and $H^{4}[\frac{K3}{\langle \sigma \rangle}]$  are both $1-$dimensional,  and both $H^{1}[\frac{K3}{\langle \sigma \rangle}],\ H^{3}[\frac{K3}{\langle \sigma \rangle}]$  vanish. 

\item For $H^{\star}(Exc)$, 
Leray-Hirsch theorem \cite{BT} says
\begin{equation}\label{equ LH}H^{\star}(Exc)=H^{\star}(\mathcal{C})\otimes H^{\star}(\mathbb{P}^{1})\end{equation}
as  in K\"unneth-formula for trivial fibration, because the Euler class  of the  universal bundle  generates  $H^{2}$ of each $\mathbb{P}^{1}-$fiber. 
\end{itemize}

With \eqref{equ coh of resol 2}, the above calculation for  $H^{\star}[\frac{K3}{\langle \sigma \rangle}]$ and $H^{\star}(Exc)$ proves  formulas  \eqref{equ b2}--\eqref{equ b4}. The reason why there is factor of ``$2$" in  \eqref{equ b3} is that $b^{1}$ of a curve is twice of the genus. \end{proof}

Let  $L_{K3,j},\ j=1...rkNS(\sigma)$ be $\sigma-$invariant line bundles that form a basis of $NS(\sigma)\otimes \mathbb{R}$ as a vector space,  such that $\frac{L_{K3,j}}{2}$ form a basis for the free part of the $\mathbb{Z}-$module $NS(\sigma)$.  $L_{K3,j}$ descends to $\frac{K3}{\langle \sigma \rangle}$ i.e.  it is the pullback of a line bundle on $\frac{K3}{\langle \sigma \rangle}$.  %They  descend to  line bundles over the quotient $\frac{K3}{\langle \sigma \rangle}$.  
Consider  a crepant resolution $\frac{[\widetilde{P^{1}\times K3}]_{0,\infty}}{\langle \sigma \rangle}$ with a K\"ahler metric $\omega_{c}$,  such that both singularities on  fibre at $0$ and $\infty$ are resolved.    This is not the orbifold compactification $\frac{\widetilde{P^{1}\times K3}}{\langle \sigma \rangle}$ throughout that only resolves singularities on fibre at $0$.     

The pullback sheaves  $\pi_{3}^{\star}L_{K3,j}$ are locally free (see \eqref{equ pi3}).   Thus they admit smooth Hermitian metrics and curvature forms $\theta_{L_{K3,j}}$.  
\begin{equation}\label{equ pi3}\pi_{3}:\ \frac{[\widetilde{P^{1}\times K3}]_{0,\infty}}{\langle \sigma \rangle}\longrightarrow \frac{P^{1}\times K3}{\langle \sigma \rangle} \longrightarrow \frac{K3}{\langle \sigma \rangle}. 
\end{equation}
The first arrow from left is the resolution morphism, and the second is projection to the base of the iso-trivial $\mathbb{P}^{1}-$fibration. 
Restriction on a $K3-$fibre away from the fibres at $0$ and $\infty$ and the  prime (irreducible) exceptional divisors  on $X$ (which do not intersect)  show that the Chern classes of 

$$[Exc_{i}],\  i=1,..,l_{0},\ \ L_{K3,j},\ j=1...rkNS(\sigma)$$
are linearly independent in  
$H^{2}[\frac{[\widetilde{P^{1}\times K3}]_{0,\infty}}{\langle \sigma \rangle},\mathbb{R}]$ and $H^{2}[X,\mathbb{R}]$ (as de Rham cohomology),  where $l_{0}$ is the number of irreducible fixed curves on the $K3$,  and $[Exc_{i}]$ are still the exceptional divisors of singularities on fibre at $0$,  not at $\infty$.  %In other words,  we resolve singularities on fibre at $\infty$ only to obtain smooth K\"ahler metrics and Hermitian metrics on the pullback lines bundles, 
Let $\theta_{[Exc_{i}]}$ be curvature forms of smooth metrics on these line bundles $[Exc_{i}]\longrightarrow \frac{[\widetilde{P^{1}\times K3}]_{0,\infty}}{\langle \sigma \rangle}$.  The K\"ahler cone on $X$ is open at  the restricted cohomology class represented by the arbitrary K\"ahler metric  $\omega_{c}$ on the compactification i.e. 
$$\omega_{c}+\Sigma_{i=1,..,l_{0}} t_{i}\theta_{[Exc_{i}]}+\Sigma_{j=1..rkNS(\sigma)}t_{j}\theta_{L_{K3,j}}$$
is still K\"ahler when the co-efficients $t_{i},\ t_{j}$ have small enough absolute values.  Particularly, by \eqref{equ b2}, number of parameters of  the family of examples provided by Theorem \ref{Thm} on a single $K3$ equals  $b^{2}$, and  $H^{2}[X,\mathbb{R}]=H^{1,1}[X,\mathbb{R}]$. 
 \subsection{Table of  examples and proof of Corollary \ref{Cor 2} \label{Sect examples}}
  By Proposition \ref{prop Betti numbers} and  lattice data of $K3-$surfaces with a non-symplectic Nikulin involution \cite{Nikulin0, Nikulin,Nikulin1}, we calculate the Betti numbers, and count the number of parameters for $ALG_{\infty}(\pi)$ Ricci flat manifolds. 
     
    In Table \ref{Table 1} -- \ref{Table 3} below,  the column ``$rk NS(\sigma)$" is the rank of the $\sigma-$invariant sub-lattice in  N\'eron-Severi lattice. The column ``largest genus $g$" is the largest genus of a  fixed curve.  In table 1 and 2,  the column ``number of rational curves" is the number of irreducible fixed curves counted without the largest genus curve, which must  be rational.  This  is not the case for the exceptional examples from  Table \ref{Table 3}.  
 
 In conjunction with Betti number Proposition  \ref{prop Betti numbers}, the column ``$b^{2}_{local}$" means the local contribution  from the resolution to the $(1,1)-$Hodge number which is the same as the  $b^{2}$ of the resolution. It equals number of  all fixed curves (connected components of the fixed locus). The $b^{2},\ b^{3},\ b^{4}$ mean the Betti numbers of $X=\widetilde{\frac{\mathbb{C}\times K3}{\langle \sigma \rangle}}$.  The  column ``Dim $\mathcal{M}_{\sigma}$" means dimension of moduli of $K3$ surfaces that equals $20-rkNS(\sigma)$. The column ``ALGs" is the number of parameters of the ALG Ricci flat metrics family obtained, which  equals $b^{2}+Dim \mathcal{M}_{\sigma}$+1. The ``1" means the scaling factor $\lambda$ \eqref{equ model orb product metric}. 
     \begin{itemize}
     \item The prime exceptional divisor corresponding to  the  irreducible fixed curve of  genus $g$ (denoted by $\mathcal{C}(g)$ in the first column)  is the  projective bundle 
$\mathbb{P}[O_{\mathcal{C}(g)}\oplus T^{1,0}_{\mathcal{C}(g)}]$.  
\item The prime exceptional divisor corresponding to a (irreducible) rational curves in column 2 is  Hirzebruch surface $\Sigma_{2}$.

\item  For the exceptional examples in  Table \ref{Table 3} below, the exceptional divisor is the disjoint union of the trivial $\mathbb{P}^{1}-$fibrations $E_{1}\times \mathbb{P}^{1}$ and $E_{2}\times \mathbb{P}^{1}$,  where $E_{i}$ are the irreducible fixed  elliptic curves. 
     \item In all 3 tables, the compactifying divisor at $\infty$ is $\frac{\textrm{the K3}}{\langle \sigma \rangle}$, which is always a smooth rational surface. 
     \end{itemize} 
       The calculations are computer aided. The sum of the last columns of all tables below is 1638. All triples of Betti numbers realized by  Table 1 and 2 are distinct. There are 64 rows in total in Table 1 (36 rows) and 2 (28 rows). Table 3 does not give new triple of Betti number as it coincides with one row in Table 1. The counting for Corollary \ref{Thm} is complete.  
       
       Two complete Ricci flat manifolds $(X,\omega_{RFALG})$, $(X^{\prime},\omega^{\prime}_{RFALG})$ in  Corollary \ref{Cor 2} are said to be   weakly distinct if one of the following holds. 
\begin{itemize}\item  The generic $K3$ fibers are  not isomorphic as $\rho-$polarized $K3-$surfaces, in the sense as \cite{AST} (``similar" to that in \cite{DK}).
\item The generic $K3$ fibers are isomorphic as  $\rho-$polarized $K3-$surfaces (which uniquely yields an isomorphism $F:\ X\longrightarrow X^{\prime}$ between the crepant resolutions),  but  the K\"ahler classes are different i.e. $F^{\star}[\omega^{\prime}_{RFALG}]\neq \omega_{RFALG}$. 
\item The generic $K3$ fibers are isomorphic as   $\rho-$polarized $K3-$surfaces, and the K\"ahler classes are ``the same" i.e.  identified by the isomorphism, but the $\lambda$ factors in \eqref{equ model orb product metric} are different. 
\end{itemize}

         \begin{center}
\begin{tabular}{|p{0.9cm}|p{1.4cm}|p{1.5cm}|p{0.7cm}|p{0.5cm}|p{0.5cm}|p{0.5cm}|p{1cm}|p{1cm}|}  
  \hline
g & Number of rational Curves &   $rkNS(\sigma)$ & $b^{2}_{local}$ &  $b^{2}$&$b^{3}$ & $b^{4}$ &  Dim $\mathcal{M}_{\sigma}$ & Dim ALG family \\   \hline
10  &0 &     1 &1   &2 & 20 & 2 & 19 & 22\\   \hline
10  &1 &     2 & 2 &4 &20  &3 & 18 & 23\\   \hline
9  &0 &     2 & 1 &3 & 18 & 2 & 18 & 22\\   \hline
8  &0 &     3 &1  & 4& 16 &2  & 17 & 22\\   \hline
9  &1 &     3 &2  & 5& 18 & 3& 17 & 23\\   \hline
7  &0 &     4 &1  &5 & 14 &2 &  16 & 22\\   \hline
8  &1 &     4 &2  & 6& 16 & 3 & 16 & 23\\   \hline
6  &0 &     5 &1  &6 & 12 & 2 & 15 & 22\\   \hline
7  &1 &     5 &2  &7 & 14 &3 & 15 & 23\\   \hline
5  &0 &     6 &1  & 7&10  & 2  & 14 & 22\\   \hline
6  &1 &     6 & 2 &8 & 12 & 3 & 14 & 23\\   \hline
7  &2 &     6 &3  &9 & 14 &4 & 14 & 24\\   \hline
4  &0 &     7 &1  &8 & 8 &2 & 13  &22 \\   \hline
5  &1 &     7 & 2 &9 & 10 &3  & 13  & 23\\   \hline
6  &2 &     7 & 3  &10 &12  & 4& 13  & 24\\   \hline
3  &0 &     8 &1  &9 & 6 & 2 &  12& 22\\   \hline
4  &1 &     8 & 2  & 10& 4 & 3& 12 & 23\\   \hline
5  &2 &     8 &3  & 11& 10 & 4 & 12 & 24\\   \hline
6  &3 &     8 &4  &12 & 12 &5  & 12 & 25\\   \hline
2  &0 &     9 &1  & 10&4  &2  & 11 & 22\\   \hline
3  &1 &     9 &2  & 11& 6 & 3 &  11 & 23\\   \hline
4  &2 &     9 &3  &12 & 8 & 4 & 11 & 24\\   \hline
5  &3 &     9 &4  & 13& 10 &5 &   11 & 25\\   \hline
6  &4 &     9 & 5 &14 & 12 &6 & 11 & 26\\   \hline
1  &0 &     10 & 1 & 11&2  &2 & 10 & 22\\   \hline
2  &1 &     10 &2  & 12& 4 & 3 & 10 & 23\\   \hline
3  &2 &     10 & 3 & 13& 6 & 4 & 10 & 24\\   \hline
4  &3 &     10 & 4 &14 & 8  & 5 & 10 & 25\\   \hline
5  &4 &     10 &5  & 15& 10 &6  & 10 & 26\\   \hline
6  &5 &     10 & 6 & 16&12  &7  & 10 & 27\\   \hline
0  &0 &     11 &1  & 12& 0 & 2 & 9 & 22\\   \hline
1  &1 &     11 &2  &13 & 2 &3 &  9 & 23\\   \hline
2  &2 &     11 &3  & 14& 4 &  4& 9 & 24\\   \hline
3  &3 &     11 & 4 &15 & 6 & 5 & 9 & 25\\   \hline
4  &4 &     11 &5  & 16& 8 & 6  & 9 & 26\\   \hline
5  &5 &     11 &6  &17 & 10 & 7 & 9 & 27\\   \hline
\end{tabular}
\captionof{table}{Non-exceptional examples Part I . Number of parameters obtained is $848$.  36 Distinct triples of Betti numbers $(b^{2},b^{3},b^{4})$ are realized.}
\label{Table 1}
\end{center}

\begin{center}
\begin{tabular}{|p{0.9cm}|p{1.4cm}|p{1.5cm}|p{0.7cm}|p{0.5cm}|p{0.5cm}|p{0.5cm}|p{0.7cm}|p{0.9cm}|}
  \hline
g &Number of rational Curves& $rkNS(\sigma)$ & $b^{2}_{local}$  & $b^{2}$&$b^{3}$ & $b^{4}$ &  Dim $\mathcal{M}_{\sigma}$ & Dim ALG family   \\ \hline
0  &1 &     12 & 2   &14 & 0 & 3  & 8 &23\\   \hline
1  &2 &     12 &3   &15 & 2 & 4&  8 & 24\\   \hline
2  &3 &     12 &4  & 16& 4 & 5 &  8 & 25\\   \hline
3  &4 &     12 & 5  & 17& 6 & 6 & 8 & 26\\   \hline
4  &5 &     12 &6  &18 & 8 & 7&  8 &27\\   \hline
0  &2 &     13 & 3 & 16& 0 & 4 & 7 & 24\\   \hline
1  &3 &     13 &4  & 17& 2 & 5&   7 & 25\\   \hline
2  &4 &    13 &5  &18 &4  & 6 & 7 & 26\\   \hline
3  &5  &     13 & 6 &19 & 6 & 7& 7 & 27\\   \hline
0  &3 &     14 &4  &18 & 0 & 5 & 6 & 25\\   \hline
1  &4 &     14 & 5 &19 &2  & 6 & 6 & 26\\   \hline
2  &5 &     14 &6  &20 & 4 &7 &  6 & 27\\   \hline
3  &6 &     14 & 7  & 21& 6 & 8&  6 & 28\\   \hline
0  &4 &     15 & 5& 20 & 0 & 6 & 5 & 26\\   \hline
1  &5 &     15 &6  &21 & 2 & 7 &  5 & 27\\   \hline
2  &6 &     15 & 7 &22 & 4 &8  &  5 & 28\\   \hline
0  &5 &     16&6   & 22& 0 & 7 & 4 & 27\\   \hline
1  &6 &     16&7  &23 & 2 &8 &  4 & 28\\   \hline
2  &7 &     16&8  &24 & 4 & 9 &  4 & 29\\   \hline
0  &6 &     17&7  & 24&0  & 8 &  3& 28\\   \hline
1  &7 &     17&8  & 25& 2 & 9 &  3 & 29\\   \hline
2  &8 &     17& 9 &26 &  4&10   &3 & 30\\   \hline
0  &7 &     18 &8   & 26&0  & 9 & 2&29 \\   \hline
1  &8 &     18 & 9 & 27& 2 &10  & 2 & 30\\   \hline
2  &9 &     18 &10  & 28&4  & 11 &  2& 31\\   \hline
0  &8 &     19&9   &28 &  0& 10 & 1 & 30\\   \hline
1  &9 &     19& 10 &29 & 2 &11 & 1 & 31\\   \hline
0  &9 &     20&10  & 30& 0 & 11 & 0 &31\\   \hline
\end{tabular}\captionof{table}{Non-exceptional examples Part II.   Number of parameters obtained is 767.  28  distinct triples of Betti numbers $(b^{2},b^{3},b^{4})$ is realized (beyond Table 1).}\label{Table 2}
\end{center}

   \begin{center} 
    \begin{tabular}{|p{1.7cm}|p{0.7cm}|p{0.5cm}|p{0.5cm}|p{0.5cm}|p{0.7cm}|p{1.1cm}|} 
  \hline
$rkNS(\sigma)$ & $b^{2}_{local}$  & $b^{2}$&$b^{3}$ & $b^{4}$ & Dim $\mathcal{M}_{\sigma}$& Dim ALG family \\   \hline
  10 & 2   & 12 &4 & 3 & 10 & 23    \\   \hline
\end{tabular}\captionof{table}{Exceptional examples. Fixed locus consists of two disjoint elliptic curves. Triple of Betti numbers $(b^{2},b^{3},b^{4})=(12,4,3)$ coincides with a row in Table 1.} \label{Table 3}\end{center} 

\section{Appendix}
\subsection{Poincare inequality under model radius exhaustion}
Instead of   distance function of the whole manifold,  the argument in \cite[Prop 4.8 i,\ ia,\ ib]{HeinThesis} for the initial decay works verbatim for the model distance $r$ on the end. The reason why we  need this is that our end is a twisted product. It seems harder for us to show the annulus are connected, than to work with the model distance $r$. For the Poincare inequality and initial decay,  we essentially only need the reference metric to be equivalent to the model orbifold metric on the end. 
\begin{lem}\label{lem Poincare} Let $A=\{\rho\leq  r \leq \rho^{\prime}\}$, $\rho^{\prime}>\rho>0$ be a non-trivial annulus in the  cone with (smooth) closed link $\mathbb{S}$: $$\mathbb{S}\times \mathbb{R}^{+},\ \ \ dr^{2}+r^{2}(d\mathbb{S})^{2}.$$
Let $Y$ be a (smooth) closed manifold, and  $P_{A},\ P_{Y}$ be the Neumann Poincare constant on the annulus and  $Y$ respectively. The following holds for any function $u\in C^{2}[A\times Y]$. $$\int_{A}\int_{Y}u^{2}\leq (P_{A}+P_{Y})\int_{A}\int_{Y}|\nabla u|^{2}.$$
\end{lem}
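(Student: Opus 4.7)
The plan is to use the standard Fubini/separation-of-variables decomposition that splits the problem into a fiber-wise Poincaré inequality on $Y$ plus a Poincaré inequality on the annulus base $A$. Concretely, I would introduce the fiber average
\[
\bar{u}(x) \;\triangleq\; \frac{1}{\mathrm{Vol}(Y)}\int_Y u(x,y)\,dy, \qquad x\in A,
\]
and the remainder $w(x,y) \triangleq u(x,y) - \bar{u}(x)$, which by construction has vanishing fiber integrals: $\int_Y w(x,\cdot)\,dy = 0$ for every $x\in A$. Under the product metric the decomposition $u=\bar{u}+w$ is $L^2$-orthogonal on each fiber, so pointwise in $x$,
\[
\int_Y u^2\,dy = \mathrm{Vol}(Y)\,\bar{u}(x)^2 + \int_Y w^2\,dy.
\]
Integrating over $A$ reduces the task to bounding each piece separately. (I note in passing that, as stated, the inequality tacitly requires $u$ to have vanishing total mean on $A\times Y$; if not, one subtracts the constant average first, which is harmless for the application in Section \ref{sect proof}.)

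For the $w$-term, Neumann--Poincaré on the closed manifold $Y$ applied in each fiber gives $\int_Y w^2\,dy \leq P_Y \int_Y |\nabla_Y w|^2\,dy = P_Y \int_Y |\nabla_Y u|^2\,dy$, and integrating over $A$ yields
\[
\int_A\!\int_Y w^2 \;\leq\; P_Y \int_A\!\int_Y |\nabla_Y u|^2 \;\leq\; P_Y \int_A\!\int_Y |\nabla u|^2.
\]
For the $\bar{u}$-term, $\bar{u}$ inherits the zero-mean property on $A$, so Neumann--Poincaré on the (truncated) cone annulus applies:
\[
\int_A \bar{u}^2 \;\leq\; P_A \int_A |\nabla_A \bar{u}|^2.
\]
Differentiating under the integral sign and applying Cauchy--Schwarz fiber-wise gives the crucial estimate
\[
|\nabla_A \bar{u}(x)|^2 \;\leq\; \frac{1}{\mathrm{Vol}(Y)}\int_Y |\nabla_A u(x,y)|^2\,dy,
\]
so that $\mathrm{Vol}(Y)\int_A \bar{u}^2 \leq P_A \int_A\!\int_Y |\nabla_A u|^2 \leq P_A \int_A\!\int_Y |\nabla u|^2$.

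Combining the two bounds via the orthogonal splitting and the fact that the product metric $dr^2+r^2(d\mathbb{S})^2 + g_Y$ gives the clean decomposition $|\nabla u|^2=|\nabla_A u|^2+|\nabla_Y u|^2$ (with respect to the product volume form $dV_A\,dV_Y$), I obtain the desired estimate with constant $P_A+P_Y$. There is no real obstacle: the only mildly subtle point is verifying that the gradient and volume form split orthogonally on the warped cone $\mathbb{S}\times \mathbb{R}^+$ cross the fiber $Y$ so that no cross-terms appear, and that applying Cauchy--Schwarz to $\nabla_A \bar u$ introduces exactly the $1/\mathrm{Vol}(Y)$ factor needed to cancel the $\mathrm{Vol}(Y)$ coming from the fiber integration of $\bar{u}^2$. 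Both are immediate consequences of the product structure on $A\times Y$.
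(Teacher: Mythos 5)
Your proof is correct and follows essentially the same route as the paper's: the same fiber-average decomposition $u=\bar u+w$, fiberwise Neumann--Poincar\'e on $Y$ for $w$, and Neumann--Poincar\'e on $A$ for $\bar u$, where your Cauchy--Schwarz bound $|\nabla_A\bar u|^2\le \frac{1}{\mathrm{Vol}(Y)}\int_Y|\nabla_A u|^2\,dy$ is exactly the estimate the paper obtains via the $L^2(Y)$-orthogonality of $\nabla_A\bar u$ and $\nabla_A w$. Your parenthetical observation that the statement tacitly requires $u$ (equivalently $\bar u$) to have vanishing mean over $A\times Y$ is accurate and applies equally to the paper's own invocation of the Neumann inequality on $A$.
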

\begin{proof} Write $u_{Ave,Y}$ for $\frac{\int_{Y}u}{Vol(Y)}$ and $v$ for $u- u_{Ave,Y}$. Then  $u_{Ave,Y}$ is a function in $A$ only, and is constant along each fiber.  $v(a,\cdot )$ has $0$ integral/average on any fiber $Y$, for any $a  \in A$. Obviously, $u_{Ave,Y}$ and $v$ are perpendicular under $L^{2}[Y]-$inner product, and
$$u=u_{Ave,Y}+v.$$
We calculate
\begin{equation}\label{equ almost Poincare}
\int_{A}\int_{Y}u^{2}=\int_{A}\int_{Y}u^{2}_{Ave,Y}+\int_{A}\int_{Y}v^{2}\leq P_{A}\int_{Y}\int_{A}|\nabla_{A}u_{Ave,Y}|^{2}+P_{Y}\int_{A}\int_{Y}|\nabla_{Y}v|^{2}.
\end{equation}
We  have the following orthogonality because $\nabla_{A}$ obviously commutes with integration in $Y$, and $u_{Ave,Y}$ is a constant along each fiber $\{a\}\times Y$. \begin{equation}\label{equ L2 splitting}
\int_{Y}\nabla_{A}u_{Ave,Y}\cdot \nabla_{A}v=0.
\end{equation}

Hence \eqref{equ L2 splitting} implies 
\begin{equation}
\int_{A}\int_{Y}|\nabla_{A}u|^{2}=\int_{A}\int_{Y}|\nabla_{A}u_{Ave,Y}|^{2}+\int_{A}\int_{Y}| \nabla_{A}v|^{2}.
\end{equation}
Plugging the above into \eqref{equ almost Poincare} and throwing away the ``good" term $$P_{A}\int_{A}\int_{Y}|\nabla_{A}(u- u_{Ave,Y})|^{2},$$ the proof is complete. 

For the reader's convenience, we provide the routine proof for \eqref{equ L2 splitting}. Let $a\in A$ and $a^{i},\ i=1...dimA$ be  a normal coordinate at/near $a$ under the cone metric. 
Then $\frac{\partial u_{Ave,Y}}{\partial a^{i}}$ is still a constant along the fiber $\{a\}\times Y$. On the other hand, because 
$$\int_{Y}v\equiv 0\ \textrm{for any}\ b\in A,$$
we find $$0=\frac{\partial}{\partial a^{i}}\int_{Y}v=\int_{Y}\frac{\partial v}{\partial a^{i}}.$$
Finally, we multiply them to get 
$$ \int_{Y} \frac{\partial u_{Ave,Y}}{\partial a^{i}}\cdot\frac{\partial v}{\partial a^{i}}=\frac{\partial u_{Ave,Y}}{\partial a^{i}}\cdot \int_{Y}\frac{\partial v}{\partial a^{i}}=0.$$
This is similar to the integration along fiber in \cite{BT}. 
\end{proof}

%\begin{prop} Let $(X^{\circ},g)$ be a Riemannian manifold that is uniformly bi-Lipschitz diffeomorphic to 
%$$\frac{Cone(S)\times Y}{\Gamma}\setminus K^{\prime\prime\prime},$$
%where $\Gamma$ is a finite group acting isometrically and freely on both the fiber $Y$ and the link/cross-section $\mathbb{S}$ of the cone, and $K^{\prime\prime\prime}$ is the compact closure of an open set containing $\{0\}\times Y$. Then 
%$$\int_{A}\int_{Y}u^{2}\leq C^{2[dimCone(\mathbb{S}+dimY])}_{0}|\Gamma|^{2}(P_{A}+P_{Y})\int_{A}\int_{Y}|\nabla u|^{2}.$$
%\end{prop}
\subsection{Hein energy decay estimate for model radius function}
Define $E(\rho)=\int_{\{r\geq \rho\}}|\nabla \phi|^{2}$. The  argument in \cite[Prop 4.8 i,\ ia,\ ib]{HeinThesis} says
there exists $0\leq \mu<1$ such that 
\begin{equation}\label{equ energy decay}
E(2^{n}r_{0})\leq \mu^{n}E(r_{0}). 
\end{equation}

Standard iterative argument (see \cite[Proof of Lemma 8.23]{GT} for a sophisticated version)  implies polynomial energy decay i.e. 
\begin{equation}\label{equ Dirichlet energy decay}
E(r)\leq C_{\beta_{0}}r^{-\beta_{0}},
\end{equation}
where $C_{\beta_{0}}=E(r_{0})\cdot r_{0}^{-\frac{\log \mu}{\log 2}},\ 
\beta_{0}=-\frac{\log \mu}{\log 2}$.

\begin{proof}[Hein's proof of \eqref{equ energy decay} under the model radius function:].\\

 The energy inequality \cite[(4.14)]{HeinThesis} gives 
\begin{equation}
\int_{\{r\geq 2^{i+1}r_{0}\}}|\nabla \phi|^{2}\leq \frac{C}{2^{i}r_{0}}|\phi-\phi_{Ave}|_{L^{2}[2^{i}r_{0}\leq r\leq 2^{i+1}r_{0}]}|\nabla\phi|_{L^{2}[2^{i}r_{0}\leq r\leq 2^{i+1}r_{0}]}.
\end{equation}
The radius $r_{0}$ is chosen large enough such that $f=0$ on $\{r\geq \frac{r_{0}}{2}\}$. Therefore the $f-$term in  \cite[(4.14)]{HeinThesis} vanishes. The annulus $2^{i}r_{0}\leq r\leq 2^{i+1}r_{0}$ are scaling equivalent  to the standard $\{1\leq r \leq 2\}$.  By Lemma \eqref{lem Poincare} and the Poincare constant bound $$P_{A}+P_{Y}\leq C(2^{i}r_{0})^{2},$$   we find 
\begin{equation*}
\int_{\{r\geq 2^{i+1}r_{0}\}}|\nabla \phi|^{2}\leq C\int_{\{2^{i}r_{0}\leq r\leq 2^{i+1}r_{0}\}}|\nabla\phi|^{2}=C(\int_{\{r\geq 2^{i}r_{0}\}}|\nabla\phi|^{2}-\int_{\{ r\geq 2^{i+1}r_{0}\}}|\nabla\phi|^{2}).
\end{equation*}
This means 
\begin{equation*}
\int_{\{r\geq 2^{i+1}r_{0}\}}|\nabla \phi|^{2}\leq \frac{C}{C+1}\int_{\{r_{0}\geq  2^{i}r_{0}\}}|\nabla\phi|^{2}.
\end{equation*}
This means
\begin{equation}\label{equ energy decay step 1} E(2^{i+1}r_{0})\leq \mu\cdot E(2^{i}r_{0})\ \textrm{for any}\ i\geq 0,\end{equation}
 if we take $\mu=\frac{C}{C+1}<1$. Then \eqref{equ energy decay} follows from \eqref{equ energy decay step 1}. 
\end{proof}
\subsection{Tian-Yau quasi atlas and Schauder norms\label{Appendix Tian Yau atlas}}

\subsubsection{Review}
Unlike injectivity radius and harmonic radius,  the concept of ``holomorphic radius " does not seem  immediate and obvious. We apply the quasi atlas \cite{TY1, HeinThesis} to the compact K\"ahler manifold $Y$, even though it is mainly applied to non-compact K\"ahler manifolds.
By Hein's version of Tian-Yau  quasi atlas \cite[Lemma 4.3]{HeinThesis}, there is a $C^{1,\alpha}$ quasi atlas which is also a $C^{k,\alpha}-$quasi atlas, for any $k>0$ and $\alpha\in (0,1)$. 
Henceforth we refer to it as ``the quasi atlas".  In  Tian-Yau coordinate, the crucial properties we rely on here is the quasi-isometry between the pullback $\Phi_{\underline{x}}^{\star} \omega_{0}$  and the Euclidean metric, and the uniform $C^{k,\alpha}$ bound on $\Phi_{\underline{x}}^{\star} \omega_{0}$.

Namely,   there is a $\mu_{0}>0$  such that for any $x\in Y$, there is a $\underline{x}\in A\subset Y$ such that 
$x\in \Phi_{\underline{x}}(B_{euc})$ and 
 \begin{itemize}
 \item $B_{euc,\Phi^{-1}_{\underline{x}}(x)}(\mu_{0})\subset B_{euc,O}(1)$,
 \item $dist[B_{euc,x}(\mu_{0}), \partial B_{euc,O}(1)]> \mu_{0}$.
 \end{itemize}
$A$ is the subset of ``centers" of open sets in the quasi-atlas,  $\Phi_{\underline{x}}$ is the holomorphic coordinate map, $B_{euc,x}(r)$ is the unit ball in Euclidean space 
$\mathbb{C}^{d-1}$ with radius $r$ centered at $x$, and $\Phi_{\underline{x}}$ identifies the origin to $\underline{x}$ on the manifold.  We interpret the conditions in \cite{HeinThesis} to coordinate chart. A schematic description of $x$, $\underline{x}$, and a point $y$ with $dist(x,y)<\mu_{0}$ under the coordinate in quasi-atlas is as Figure \ref{fig balls}. 
\begin{figure}[h]
\begin{center}
\begin{tikzpicture}[scale=1.5]
  \draw (0,0) circle (1.5);
   \node  at (0,0) {$\underline{x}$};
    \node  at (55:0.8) {$x$};
     \draw (55:0.8) circle (0.5);
      \node  at (45:1.1) {$y$};
   \end{tikzpicture} \end{center}\caption{Positions of $x,\ y$ and $\underline{x}$}\label{fig balls}
 \end{figure}
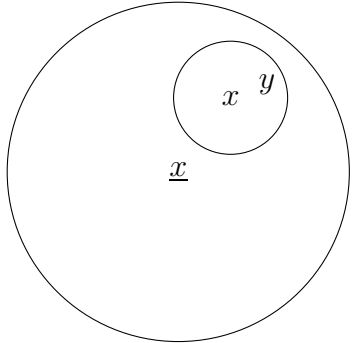
\subsubsection{Poly-Cylinder}
Let $$\mu_{1}<< \frac{\min\{\mu_{0},h_{0,Y},inj_{0,Y}, conv_{0,Y}\}}{c_{TY,ALG}}$$ be a positive  radius where $c_{TY,ALG}$ is a large enough constant depending on   ALG model  \eqref{equ model orb product metric} and Tian-Yau atlas, where $h_{0,Y},\ inj_{0,Y},\ conv_{0,Y}$ are  harmonic,  injectivity, and convex  radius of $Y$ under the metric $\omega_{Y}$ respectively. Let the Schauder norm on the fibred annulus $A(R_{1},R_{2})$ be defined in the standard way (for example, see \cite[1.2.2]{Joyce}): supremum of norms of co-variant derivatives under  ALG model, together with $\alpha-$H\"older semi norms by the distance  on $\mathbb{C}\times Y$. To estimate  the Lipschitz quotient $\frac{|v(x)-v(y)|}{d^{\alpha}(x,y)}$, where  $v$ is either a function or a  matrix valued function representing  a $(1,1)-$form under coordinate chart, it is well known that it suffices to assume $d(x,y)<\mu$ for some uniform number $\mu$ independent of $v$,  because otherwise the Lipschitz quotient is bounded by the $C^{0}-$norm of $v$. Therefore, \textit{it suffices to estimate in poly-cylinders} %$$PC_{z, \overline{x},y}(\mu_{1})\triangleq D_{z, \mathbb{C}}(\mu_{1})\times \Phi_{\overline{x}}^{-1}[B_{euc, y}(\mu_{1})].$$
%Abusing notation, let 
\begin{equation}\label{equ TY polycylinder}PC_{z, \underline{x},y}(\mu_{1})\triangleq D_{z, \mathbb{C}}(\mu_{1})\times [B_{euc, y}(\mu_{1})]\subseteq \mathbb{C}\times Y\end{equation}  in Tian-Yau chart. The symbol $D_{z, \mathbb{C}}(\mu_{1})$ denotes disk/ball of radius $\mu_{1}$ centered at $z$ in the $\mathbb{C}-$component. 
When $\mu_{2}< < \mu_{1}$, the above poly-cylinder contains 
the Euclidean ball $B_{euc, (z,y)}(\mu_{2})$ where the sub-scripts are usually suppressed. In the energy decay estimate \eqref{equ energy decay 0}, Neumann-Poincar\'e inequality is applied on such a ball. 
\subsubsection{Weighted Schauder norm \label{section weighted Schauder norm}}
We define  weighted Schauder norms on the end by supremum of naively weighted norms on fibred annulus:  
\begin{equation}|v|_{C^{k,\frac{1}{2}}_{\beta}[A(R,\infty)]}\triangleq \sup_{l\geq R} l^{\beta}|v|_{C^{k,\frac{1}{2}}[A(l,l+2)]}.
\end{equation}
On one hand, 
$$|v|_{C^{k,\frac{1}{2}}[A(l,l+2)]}\leq C\sup_{{PC_{z, \underline{x},y}(\mu_{1})}\cap [A(l,l+2)] \neq \emptyset}  |v|_{C^{k,\frac{1}{2}}[PC_{z, \underline{x},y}(\mu_{1})]}.$$
The width $2$ of the annulus is not serious. Any universal positive constant will do. 
On the other hand, when $\Phi_{\underline{x}}[PC_{z, \underline{x},y}(\mu_{1})]\subset A(l,l+2)$ (the map in $\mathbb{C}-$component is identity), we have 
$$|v|_{C^{k,\frac{1}{2}}[A(l,l+2)]}\geq   C|v|_{C^{k,\frac{1}{2}}[PC_{z, \underline{x},y}(\mu_{1})]}.$$
The norm on the right  means the (usual) norm under Tian-Yau coordinate (and Euclidean metric therein), but the norm on the left employs geodesic distance as in \cite{Joyce}.

\begin{proof}[Proof of \eqref{equ higher order initial decay}:]
The weighted $L^{2}-$estimate for  Newtonian potential \cite[Theorem 1.2. (2)]{West},  and exponential decay of $O(e^{\tau_{0}}r)-$harmonic functions with fiber-wise $0-$average \cite[Theorem 1.2. (5)]{West},  in conjunction with the decomposition \eqref{equ decomposition}
\eqref{equ decomposed},  yields the $L^{2}-$decay for the part of the potential with fiber-wise $0-$average: 
\begin{equation}|\phi_{0}|_{L^{2}[(a,b)\times (\theta_{0},\theta_{1})\times Y]}\leq \frac{C}{r^{\delta_{0}}}. 
\end{equation}
In view of \eqref{equ ddbar of phi est equiv to both parts} and the discussion below it saying decay of $i\partial\overline{\partial}\phi$ implies decay of $i\partial\overline{\partial}\phi_{0}$,   Moser iteration gives 
$$|\phi_{0}|_{C^{0}[D(\mu_{3})]}\leq \frac{C}{r^{\delta_{0}}},$$ 
where $\mu_{3}\leq \frac{1}{10}\min\{\mu_{1},\ \mu_{2}\}$ is also required to be small enough  with respect to  $R$ such that $D(1000^{d}\mu_{3})$ is contained in the fibered sector $(a,b)\times (\theta_{0},\theta_{1})\times Y$ in  \eqref{equ 1 Schwarts strip}.

For the part $\underline{\phi}$, since it is pullback from (the domain in) the $\mathbb{C}-$component,  we can simply solve  
\begin{equation}\Delta_{\mathbb{C}}g=\Delta_{\mathbb{C}}\underline{\phi}\ \textrm{on}\  D(\frac{\mu_{3}}{2}),\ g|_{\partial D(\frac{\mu_{3}}{2})}=0\ \textrm{for}\ g.\ \textrm{Thus}\ \
\label{equ decay estimate of g}|g|_{C^{0}[D(\frac{\mu_{3}}{2})]}\leq \frac{C}{r^{\delta_{0}}}.\end{equation}

 The point is still that  a harmonic function of $1-$complex variable is pluri-harmonic. We find 
$$i\partial \overline{\partial}\phi=i\partial \overline{\partial}(g+\phi_{0}). $$
Evans-Krylov estimate \cite{Evans,Krylov} implies
\begin{equation}  |i\partial \overline{\partial}(g+\phi_{0})|_{C^{\alpha_{0}}[D(\frac{\mu_{3}}{3})]}=|i\partial \overline{\partial}\phi|_{C^{\alpha_{0}}[D(\frac{\mu_{3}}{3})]}\leq C\ \textrm{for some}\ \alpha_{0}\in (0,1). 
\end{equation}
Here we at most need $|g|_{C^{0}[D(\frac{2\mu_{3}}{5})]}\leq C$ which is not the full strength of \eqref{equ decay estimate of g}. Therefore,  differentiating the Monge-Ampere equation 
\begin{equation}
[\omega_{0}+i\partial \overline{\partial}(g+\phi_{0})]^{d}=(\omega_{0}+i\partial \overline{\partial}\phi)^{d}=\omega^{d}_{0}\ \ \ \  (\eqref{equ Target MA}\ \ \textrm{on the end})
\end{equation}
under Tian-Yau poly-cylinder chart \eqref{equ TY polycylinder} (with smaller component-wise radius)
 as usual (where the  derivatives of the coordinate forms vanish) yields  higher-order estimate for any $k\geq 0$
\begin{equation}\label{equ ddbar g+phi0}  |i\partial \overline{\partial}(g+\phi_{0})|_{C^{k, \alpha_{0}}[D(\frac{\mu_{3}}{4})]}=|i\partial \overline{\partial}\phi|_{C^{k, \alpha_{0}}[D(\frac{\mu_{3}}{4})]}\leq C_{k}. 
\end{equation}
Here,  $i\partial \overline{\partial}\phi_{\omega_{0}}=\omega_{0}$ is solvable on the  poly-cylinder $D_{z, \mathbb{C}}(\mu_{3})\times [B_{euc, y}(\mu_{3})]$ by  a smooth solution
$\phi_{\omega_{0}}$  on which the $C_{k}'$s above depend.  The estimates \eqref{equ ddbar g+phi0} guarantee that the $C^{k, \alpha_{0}}[D(\frac{\mu_{3}}{4})]-$norm of the co-efficient of 
\begin{equation}\label{equ MA as Laplace}\Delta_{\omega_{g+\phi_{0}}}(g+\phi_{0})=0\ i.e.   \end{equation} the Monge-Ampere equation \eqref{equ Kovalev form of MA} written as the homogeneous ``Laplace" equation,  are bounded.  Linear interior Schauder estimate \cite{GT} for homogeneous equations (that only requires the $C^{0}-$bound on the solution) yields 
$$|g+\phi_{0}|_{C^{k,\alpha_{0}}[D(\frac{\mu_{3}}{10})]}\leq |g+\phi_{0}|_{C^{0}[D(\frac{\mu_{3}}{2})]}\leq \frac{C}{r^{\delta_{0}}}. $$
  The equations  \eqref{equ MA as Laplace} and  \eqref{equ Kovalev form of MA} are the same (on the end) written in different forms. 
  \end{proof}
\subsection{Gluing for ansatz}
\begin{lem}\label{lem Gluing for ansatz}(Gluing for ansatz \cite{HeinJAMES, HeinThesis, HHN}) Under Theorem \ref{Thm}, on  an  iso-trivial ALG Calabi-Yau fibration $\widetilde{\frac{\mathbb{C} \times Y}{\langle  \sigma \rangle}}$, let $\chi(r)$ be a cutoff function $\equiv 1$ when $r\geq R+1$ but $\equiv 0$ when $r\leq R$, for some $R>>1$. Let $\varphi$ be another cutoff function that $\equiv 1$
 when $r\geq R-1$ but $\equiv 0$ when $r\leq R-2$.  Let $b_{0}\triangleq \varphi(r)\frac{i}{2}dzd\overline{z}$. 
 Suppose $\widetilde{\omega}$ is a real $(1,1)-$form on $X$  that is positive/K\"ahler on $\{r\leq R+1\}$, and $\phi$ is a real-valued function on $\{r> R-3\}$ such that
 $ \widetilde{\omega}+i\partial \overline{\partial}\phi$ is positive/K\"ahler on $\{r \geq R\}$. Then  when $t$ is large enough, the real closed $(1,1)-$form
 \begin{equation}\label{equ isotrivial ansatz ALG}
 \underline{\omega}\triangleq \widetilde{\omega}+i\partial \overline{\partial}(\chi \phi)+tb_{0}
 \end{equation}
 is a K\"ahler metric  on $X$ i.e. it is positive definite every-where. 
\end{lem}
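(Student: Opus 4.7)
The plan is to verify positivity of $\underline{\omega}$ in each of three regions determined by the support structure of the cutoffs: the inner piece $\{r\leq R\}$, the outer piece $\{r\geq R+1\}$, and the compact transition annulus $\{R\leq r\leq R+1\}$. On the inner piece, $\chi\equiv 0$ makes $\chi\phi$ vanish identically, so $\underline{\omega}=\widetilde{\omega}+tb_{0}$, and since $b_{0}\geq 0$ (it is a non-negative multiple of $\frac{i}{2}dz\,d\bar z$) added to a K\"ahler form, the result is K\"ahler. On the outer piece, $\chi\equiv 1$, so $i\partial\bar\partial(\chi\phi)=i\partial\bar\partial\phi$ and $\underline{\omega}=\widetilde{\omega}+i\partial\bar\partial\phi+tb_{0}$, which is again K\"ahler by the hypothesis on $\widetilde{\omega}+i\partial\bar\partial\phi$ plus semi-positivity of $b_{0}$.

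The real work is on the transition annulus, which is compact (since $Y$ is compact). Here I would expand
\[
i\partial\bar\partial(\chi\phi)=\chi\cdot i\partial\bar\partial\phi + i\partial\chi\wedge\bar\partial\phi + i\partial\phi\wedge\bar\partial\chi + \phi\cdot i\partial\bar\partial\chi,
\]
and rewrite the main term as a convex combination
\[
\widetilde{\omega}+\chi\cdot i\partial\bar\partial\phi=(1-\chi)\widetilde{\omega}+\chi(\widetilde{\omega}+i\partial\bar\partial\phi),
\]
which is K\"ahler on the overlap (both summands are K\"ahler there by hypothesis). Denoting the remaining three terms by $E$, one has $\underline{\omega}=(1-\chi)\widetilde{\omega}+\chi(\widetilde{\omega}+i\partial\bar\partial\phi)+E+tb_{0}$, with $E$ smooth and bounded on the compact transition annulus uniformly in $t$.

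The key structural observation, and the heart of the argument, is that $\chi=\chi(r)$ depends only on the base coordinate, so $\partial\chi,\bar\partial\chi,\partial\bar\partial\chi$ all have purely base-type components. Consequently $E$ has \emph{no} pure fiber-fiber block in the Hermitian matrix representation of $\underline{\omega}$ split as base $\oplus$ fiber: the only components of $E$ are in the base-base entry and the base-fiber mixed entries. Writing $\underline{\omega}$ as the block matrix
\[
\underline{\omega}=\begin{pmatrix} a+t & \beta^{*} \\ \beta & C \end{pmatrix},
\]
where $C\succ 0$ is the fiber-fiber block of the convex combination (hence bounded below by a positive constant on the compact transition annulus), $a>0$, and $\beta$ collects the combined base-fiber components including those from $E$, positivity reduces by the Schur complement to
\[
a+t-\beta^{*}C^{-1}\beta>0.
\]
Since $a,\beta,C^{-1}$ are all bounded on the compact transition annulus uniformly in $t$, the quantity $\beta^{*}C^{-1}\beta$ is bounded by a constant $M$ depending only on $\widetilde{\omega},\phi,\chi,\varphi$ and the ALG model, so choosing $t>M$ concludes the argument.

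The main obstacle I anticipate is bookkeeping the fact that $E$ truly has no fiber-fiber block; this is what allows the one-dimensional base-direction bump $tb_{0}$ to dominate via Schur complement, even though $b_{0}$ is only semi-positive and is concentrated in the base. Once that structural feature is extracted, the compactness of the transition annulus makes the remaining estimates entirely routine, and no hypothesis on the size of $\phi$ in the fiber direction is needed.
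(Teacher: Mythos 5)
Your proof is correct and follows essentially the same route as the paper: the same convex-combination decomposition $(1-\chi)\widetilde{\omega}+\chi(\widetilde{\omega}+i\partial \overline{\partial}\phi)$, the same key observation that $\partial\chi,\ \overline{\partial}\chi$ are pulled back from the base so the error term has no fiber-fiber block, and the same use of compactness of the transition annulus to choose $t$ large. The only cosmetic difference is that you close the argument with a Schur-complement criterion on the block Hermitian matrix, whereas the paper completes the square directly via Cauchy--Schwarz on the vertical/horizontal decomposition of tangent vectors; these amount to the same computation.
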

$\textrm{Domain of}\ \phi\supset  \overline{Supp \varphi} \supseteq \{\varphi\equiv 1\}\supset  \overline{Supp\chi}$. See  Figure \ref{picture A1}. 

\begin{figure}[h]\begin{center}  \begin{tikzpicture}[line/.style={-}]
\draw[->,semithick] (-6,0) -- (6,0);
\node at (-5.5,0) [circle,fill,inner sep=1pt]{};
\node at (-5.5,-0.3) {\tiny $R-3$};
\node at (-2.8,0) [circle,fill,inner sep=1pt]{};
\node at (-2.8,-0.3) {\tiny $R-2$};
\node at (-0.1,0) [circle,fill,inner sep=1pt]{};
\node at (-0.1,-0.3) {\tiny $R-1$};
\node at (2.6,0) [circle,fill,inner sep=1pt]{};
\node at (2.6,-0.3) {\tiny $R$};
\node at (5.3,0) [circle,fill,inner sep=1pt]{};
\node at (5.3,-0.3) {\tiny $R+1$};
\path [line, bend right] (2.6,0) edge  (3.95, 1.35);
\path [line, bend left] (3.95, 1.35) edge  (5.3,2.7);
    \draw [dashed,-] (5.3,2.7) -- (6,2.7); 
      \node at (3.7, 1.35) { $\chi$};
\path [line, bend right] (-2.8,0) edge  (-1.45, 1.35);
\path [line, bend left] (-1.45, 1.35) edge  (-0.1,2.7);
    \draw [dashed,-] (-0.1,2.7) -- (2.6,2.7);
    \node at (-1.7, 1.35) { $\varphi$};
     \draw [|-] (-5.5,1) -- (-5, 1); 
       \node at (-4.3, 1) { $Dom(\phi)$};
         \draw [dashed,-] (-3.6,1) -- (-2.5,1);
         \end{tikzpicture}  
\end{center}
\caption{Supports and domain in gluing lemma for ansatz}
\label{picture A1}
\end{figure}
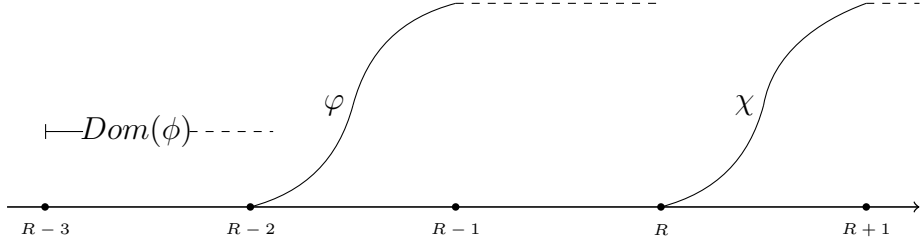
\begin{proof}
It is routine by  positivity of $\widetilde{\omega}$ in the fiber direction and positivity of the bump form $tb_{0}$ in the ``horizontal" direction.
The idea is that in $Supp\chi$/the gluing region, since $\chi$ is pulled back from $\Delta^{\star}$, the $(1,1)-$form $\widetilde{\omega}+i\partial \overline{\partial}(\chi \phi)$ is positive on the fiber $Y-$direction. The mixed forms with both base and fiber directions can be bounded from below by Cauchy-Schwarz inequality. Therefore adding the (horizontal) bump form $tb_{0}$  makes it positive. Here the $t$ is required to be large enough with respect to the smooth $\phi$ which is only required to exist locally. It has bounded $C^{k}-$norms on the gluing region (which has compact closure), for any $k$. 

 We calculate
  \begin{equation}
  \widetilde{\omega}+i\partial \overline{\partial}(\chi \phi)= \widetilde{\omega}+\chi  i\partial \overline{\partial}\phi+\phi  i\partial \overline{\partial}\chi+2Re (i\partial\chi\wedge \overline{\partial} \phi). 
  \end{equation}
  The sum of the first two term on the right side above is positive:
  \begin{equation}
   \widetilde{\omega}+\chi  i\partial \overline{\partial}\phi=(1-\chi)\widetilde{\omega}+\chi (\widetilde{\omega}+ i\partial \overline{\partial}\phi)>0\ \textrm{is positive definite Hermitian}.
  \end{equation}
 We therefore use the norm $|\cdot|$ of the Hermitian metric $H_{\chi}$ (associated to) $\widetilde{\omega}+\chi  i\partial \overline{\partial}\phi$.   Let $v$ and $h$ be a vertical (tangent to fiber) and horizontal $(1,0)-$vector (proportional to $\frac{\partial}{\partial z}$), we compute
  \begin{eqnarray*}& & [\widetilde{\omega}+i\partial \overline{\partial}(\chi \phi)][v+h,\overline{v}+\overline{h}]
  \\&=&  [\widetilde{\omega}+\chi  i\partial \overline{\partial}\phi][v+h,\overline{v}+\overline{h}]+(\phi  i\partial \overline{\partial}\chi)[h,\overline{h}]+ 2Re  (i \partial \chi \wedge \overline{\partial} \phi)[h,\overline{h}]
  \\& &+2Re(i \partial \chi \wedge \overline{\partial} \phi) [h,\overline{v}]+2Re(i \partial \chi \wedge \overline{\partial} \phi) [v,\overline{h}]
  \\&\geq & |v+h|^{2}-c_{1}|v||h|-c_{2}|h|^{2}.
   \end{eqnarray*}
   In  the following bound, the reason for constants $c_{1}$ and $c_{2}$ 
  \begin{eqnarray*}
  & & (\phi  i\partial \overline{\partial}\chi)[h,\overline{h}]+ 2Re  (i\partial \chi \wedge \overline{\partial} \phi) [h,\overline{h}]
  \\& &+2Re (i\partial \chi \wedge \overline{\partial}\phi) [h,\overline{v}]+2Re (i\partial \chi \wedge \overline{\partial}\phi) [v,\overline{h}]
  \\&\geq & -c_{1}|v||h|-c_{2}|h|^{2}.
  \end{eqnarray*}
  is compactness of  $Supp \nabla\chi\cup Supp \partial \chi \cup Supp \overline\partial \chi$. 
   The point is the vanishing of the terms $(\partial \chi \wedge \overline{\partial} \phi) [v,\overline{v}]$, $(\phi  i\partial \overline{\partial}\chi)[v,\cdot]$ and the conjugates i.e. $\partial \chi$ and $\overline{\partial}\chi$ vanishes on vectors tangent to  fiber. Cauchy-Schwarz inequality implies 
   $$-c_{1}|v||h|\geq -\frac{1}{2}|v|^{2}-c_{3}|h|^{2}.$$
   Then when $t>1000(c_{2}+c_{3}+1)$, the bump form $tb_{0}$ makes $$\underline{\omega}\geq \frac{1}{3}H_{\chi}.$$
   \end{proof}
\subsection{``Un-conditional" solvability of Laplace/$i\partial \overline{\partial}$ equation in dimension $1$. }
As a pre-requisite of $i\partial \overline{\partial}$ Lemma \eqref{lem strong iddbar}  and radial K\"ahler potential for radial bump $(1,1)$ form on $\mathbb{C}$ (required in proof of Theorem \ref{Thm}), we need the following.  
 \begin{clm}\label{clm DeltaC solvability Hormander} For any smooth function $f$ on $\Delta^{\star}$, there is a smooth solution $h$ to 
\begin{equation}\label{equ iddbar hormander C}\frac{\partial^{2}h}{\partial u\partial \overline{u}}=f,
\end{equation}
which is equivalent to $\Delta_{\mathbb{C}}h=4f$
 because $\Delta_{\mathbb{C}}=4\frac{\partial^{2}}{\partial u\partial \overline{u}}$.
 
 Suppose $f$ is compactly supported in $\{\rho<\rho_{1}\}$ and radial i.e.  $f=f(\rho)$ is a function of $\rho$ only. Then there is a radial solution $h(\rho)$ such that $h(\rho)=0$ when $\rho> \rho_{1}$.
 \end{clm}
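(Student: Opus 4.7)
The equation $\partial_u \partial_{\overline u} h = f$ is, up to a factor of $4$, the Poisson equation on the punched disk $\Delta^{\star}$. My plan is to dispatch the general case by factoring it through two successive inhomogeneous Cauchy--Riemann problems, each solvable by Dolbeault vanishing because $\Delta^{\star}$ is a non-compact Riemann surface (hence Stein). The radial case I will handle separately, by direct integration of the resulting ODE, choosing constants so that $h$ vanishes outside $\mathrm{supp}\, f$.

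\textbf{General solvability via two $\bar{\partial}$-steps.} First I would solve $g_{\overline u} = f$, i.e.\ $\overline{\partial} g = f\, d\overline u$. Any $(0,1)$-form on a Riemann surface is $\overline{\partial}$-closed automatically, and $H^{0,1}_{\overline{\partial}}(\Delta^{\star}) = H^{1}(\Delta^{\star}, \mathcal{O}) = 0$ by Cartan's Theorem~B (the punched disk is Stein), so a smooth $g$ exists. Next I would solve $h_u = g$. By the conjugate Dolbeault isomorphism $H^{1,0}_{\partial}(\Delta^{\star}) \cong \overline{H^{0,1}_{\overline{\partial}}(\Delta^{\star})} = 0$, and since $g\, du$ is automatically $\partial$-closed (being of top $\partial$-degree in complex dimension one), a smooth function $h$ with $\partial h = g\, du$ exists. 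Then
\[
h_{u\overline u} = (h_u)_{\overline u} = g_{\overline u} = f,
\]
as required. If $f$ is real, replace $h$ by $\Re h$ to get a real solution.

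\textbf{Radial case by explicit ODE.} For radial $f = f(\rho)$ compactly supported in $\{\rho < \rho_1\}$, a radial $h$ satisfies $\tfrac{1}{\rho}(\rho h')' = 4f$. Imposing $h \equiv 0$ on $\rho \geq \rho_1$ forces the boundary condition $h'(\rho_1) = 0$, so integrating $(\rho h')' = 4\rho f$ from $\rho_1$ gives $\rho h'(\rho) = -4\int_{\rho}^{\rho_1} s f(s)\, ds$, and a second integration combined with Fubini yields the explicit formula
\[
h(\rho) = 4 \int_{\rho}^{\rho_1} s f(s)\, \log(s/\rho)\, ds \quad \text{for } 0 < \rho < \rho_1,
\qquad h(\rho) = 0 \text{ for } \rho \geq \rho_1.
\]
Smoothness on $\Delta^{\star}$ is clear because the integrand is smooth in $(s,\rho)$ for $\rho > 0$; the smooth matching at $\rho_1$ is automatic because $f$ vanishes on a neighborhood of $\rho_1$ by compact support.

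\textbf{Main difficulty.} There is essentially no substantive obstacle: the Dolbeault/Cartan~B input is classical on Stein surfaces, and the radial ODE is elementary. The only subtle point worth flagging is that the general-case solution is unique only up to a pluri-harmonic function, so the $h$ produced by the two $\overline\partial$-steps need not be radial even when $f$ is --- which is precisely why the second half of the claim is not a direct corollary of the first, and must be obtained from the explicit integral formula above.
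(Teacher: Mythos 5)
Your proof is correct and follows essentially the same route as the paper: the general case is handled by two successive first-order $\overline{\partial}$-type solvability steps (the paper cites H\"ormander's Theorem~1.4.4 on an open subset of $\mathbb{C}$ where you invoke Dolbeault vanishing on the Stein surface $\Delta^{\star}$ --- the same underlying fact), and the radial case by explicitly integrating $(\rho h')'=4\rho f$ with data at $\rho_{1}$. Your closed-form $h(\rho)=4\int_{\rho}^{\rho_{1}}sf(s)\log(s/\rho)\,ds$ is the correct one (it satisfies $\tfrac{1}{\rho}(\rho h')'=4f$ and vanishes for $\rho$ beyond the support of $f$), whereas the paper's displayed formula is the same expression up to an overall sign.
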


The first part (no symmetry) is a direct consequence of \cite[Theorem 1.4.4]{Hormander} i.e.  the $1-$dimensional $\overline{\partial}-$solvability that implies for any $f\in C^{\infty}(\Delta^{\star})$, 
 $$\frac{\partial \varsigma}{\partial \overline{u}}=f\ \textrm{admits a solution}\ \varsigma \in C^{\infty}(\Delta^{\star}).$$
 The complex conjugate of the same  solvability yields a smooth function $h$ that solves
 $$\frac{\partial h}{\partial u}=\varsigma.$$
We do not need any ($L^{2}$) integrabilty or asymptotes of $f$ near $0$. 

When $f$ is radial, we simply solve the ODE
$$\frac{\partial^{2}h}{\partial u\partial \overline{u}}(\rho)=\frac{1}{4}\Delta_{\mathbb{C}}h(\rho)\ \ i.e.\ \ h^{\prime\prime}+\frac{h^{\prime}}{\rho}=4f$$
by\begin{equation}\nonumber
h=4\log \rho\int^{\rho_{1}}_{\rho}xf(x)dx-4\int^{\rho_{1}}_{\rho}(x \log x)f(x)dx.
\end{equation}
Consequently,  
$h=0$ when $\rho>\rho_{1}$ because the integrals are outside the support of $f$.

\footnotesize

\end{document}